\documentclass[11pt,reqno]{amsart}
\usepackage{color}
\usepackage{amsmath,amssymb,amsthm,amsfonts}
\usepackage{hyperref}
\usepackage{color}
\usepackage{setspace}
\usepackage{placeins}
\makeatletter

\newcommand{\Rmnum}[1]{\expandafter\@slowromancap\romannumeral #1@}
\makeatother

\newtheorem{thm}{Theorem}[section]

\newtheorem{lemma}[thm]{Lemma}
\newtheorem{remark}{Remark}[section]
\newtheorem{theorem}[thm]{Theorem}
\newtheorem{proposition}[thm]{Proposition}

\usepackage[numbers,sort&compress]{natbib}
\usepackage{graphicx}
\usepackage{caption}
\usepackage{subcaption}
\usepackage{tikz}
\usepackage{multirow}

\usepackage[mathlines]{lineno}

\begin{document}

\author{Huaizhi Cao}
\address{School of Mathematics, South China University of Technology,
Guangzhou 510640, China}
\email{mahzcao@163.com}

\author{Jiawei Chu$^{*}$}
\address{School of Mathematics, South China University of Technology,
Guangzhou 510640, China}
\email{majwchu@163.com}

\author{Hai-Yang Jin}
\address{School of Mathematics, South China University of Technology,
Guangzhou 510640, China}
\email{mahyjin@scut.edu.cn}

\thanks{$^*$Corresponding author: Jiawei Chu}

\title[Periodic dynamics in a forager-exploiter system]{Periodic dynamics in a forager-exploiter system under homogeneous and heterogeneous resource environments}
\begin{abstract}
We investigate time-periodic dynamics in a forager-exploiter system with a taxis cascade under both homogeneous and heterogeneous resource environments. The model describes the interactions among foragers, exploiters, and environmental resources, where foragers move toward higher resource densities while exploiters aggregate toward regions with higher forager densities. Our results show that different resource renewal mechanisms shape periodic dynamics in fundamentally different ways. Precisely, for time-periodic resource renewal rates, we establish the existence of positive time-periodic solutions for any positive renewal rate and further prove their global stability under suitable conditions on the parameters. In contrast, for homogeneous environments, only large resource renewal rates can destabilize the constant steady state through the Hopf bifurcation, thereby generating non-constant time-periodic solutions.

Interestingly, for spatially heterogeneous and temporally homogeneous environments, our numerical simulations indicate that spatial heterogeneity exhibits opposing effects on periodic dynamics depending on total resource availability. When resources are sufficiently abundant, spatial heterogeneity tends to suppress the emergence of temporal oscillations, whereas when resources are relatively scarce, it may instead promote oscillatory behaviors, and sufficiently concentrated local resource supplies can trigger local or even global temporal oscillations. These findings reveal a delicate interplay among resource renewal mechanisms, resource availability, and spatial heterogeneity in shaping dynamics behavior.
\end{abstract}

\subjclass[2000]{35A01, 35B40, 35B44, 35K57, 35Q92, 92C17}

\keywords{Forager–exploiter model; Homogeneous/heterogeneous environments; Time-periodic solutions; Hopf bifurcation; Global stability}

\maketitle

\numberwithin{equation}{section}

\maketitle

\numberwithin{equation}{section}

\section{Introduction}
Social interactions among populations can generate complicated spatiotemporal dynamics, such as aggregation phenomena and oscillatory spatiotemporal patterns. Exploring the dynamics behavior and understanding formation mechanisms of such complex patterns have become active topics in the interdisciplinary field of biology, behavioral science and mathematics \cite{1,5,6,9,8,12}. A typical example of such interaction is the aggregative foraging flocks formed by shearwaters following kittiwakes in Alaska coastal waters \cite{7}. In this ecological system, kittiwakes (as foragers) search for regions with higher resource densities, whereas shearwaters (as exploiters) move toward regions with higher forager densities.

To describe such interaction, a taxis cascade reaction-diffusion model was developed in \cite{9} to depict the coupled dynamics of forager populations, exploiter populations, and resources, which reads as below:
\begin{equation}\label{system}
\begin{cases}
u_t=\Delta u-\chi_1\nabla\cdot(u\nabla w),&x\in\Omega,~t>0,\\
v_t=\Delta v-\chi_2\nabla\cdot(v\nabla u),&x\in\Omega,~t>0,\\
w_t=d\Delta w-\lambda(u+v)w-\mu w+r(x,t),&x\in\Omega,~t>0,\\
\nabla u\cdot\nu=\nabla v\cdot\nu=\nabla w\cdot \nu=0,&x\in\partial\Omega,~t>0,\\
(u,v, w)(x,0)=(u_0, v_0, w_0)(x), &x\in\Omega,
\end{cases}
\end{equation}
where the functions $u=u(x,t),v=v(x,t),w=w(x,t)$ denote the densities of the forager population, exploiter population and resource at space $x$ and time $t$, respectively. Here $\Omega\subset\mathbb{R}^n(n\geq 1)$ is a bounded domain with smooth boundary, and the homogeneous Neumann boundary conditions indicate that no populations across the boundary. The parameters $\chi_1,\chi_2,d$, $\lambda$ and $\mu$ are all positive constants. The taxis terms $-\chi_1\nabla\cdot(u\nabla w)$ and $-\chi_2\nabla\cdot (v\nabla u)$ represent the directed movement of foragers toward areas with higher resource density and exploiters toward areas with higher forager density, respectively. The nonnegative function $r(x,t)\geq 0$ denotes the renewal rate from an external repository.  

When there is no  exploiter population (i.e., $v\equiv0$),  \eqref{system} is reduced to a single-species chemotaxis-consumption model 
\begin{equation}\label{system1}
\begin{cases}
u_t=\Delta u-\chi_1\nabla\cdot(u\nabla w),&x\in\Omega,~t>0,\\
w_t=d\Delta w-\lambda uw-\mu w+r(x,t),&x\in\Omega,~t>0,\\
\nabla u\cdot\nu=\nabla w\cdot \nu=0,&x\in\partial\Omega,~t>0,\\
(u, w)(x,0)=(u_0, w_0)(x), &x\in\Omega.
\end{cases}
\end{equation}
For the system \eqref{system1} with $\mu=r(x,t)\equiv 0$, Tao and Winkler \cite{10} established the existence of global classical solutions in two dimensions and global weak solutions in three-dimensional settings. They further proved that all solutions asymptotically converge to the constant equilibrium. The analysis relies crucially on an entropy-type energy identity of the form
\begin{align*}
&\frac{d}{dt}\left(\frac{1}{2}\int_\Omega \frac{|\nabla w|^2}{w}+\frac{\lambda}{\chi_1}\int_\Omega u\ln u\right)+\frac{\mu}{2}\int_\Omega\frac{|\nabla w|^2}{w}
+d\int_\Omega w\left|D^2\ln w\right|^2
+\frac{\lambda }{\chi_1}\int_\Omega\frac{|\nabla u|^2}{u}+\frac{\lambda}{2}\int_\Omega \frac{u|\nabla w|^2}{w}\\
&=\frac{d}{2}\int_{\partial\Omega}\frac{1}{w}\frac{\partial |\nabla w|^2}{\partial\nu}\,dS.
\end{align*}
This energy structure has been widely adopted for various variants of \eqref{system1}, including prey-taxis systems \cite{JinWang-JDE-2017, Winkler-JDE-2017, WuShiWu-JDE-2016} and chemotaxis–fluid models \cite{Winkler-ARMA-2014}, among others. However, the aforementioned entropy-energy functional is only sufficient to establish global classical solvability in low spatial dimensions ($1\leq n\leq 2$). In higher dimensions ($n\geq 3$), global classical solvability remains unknown for the fully parabolic system \eqref{system1}, in contrast to the parabolic-elliptic counterpart, for which global classical solvability has been obtained by Tao and Winkler \cite{TaoWinkler-JDE-2019-PE}. In their work, the authors established global classical solutions in arbitrary dimensions and proved the global asymptotic stability of the constant steady state under suitable decay conditions imposed on $r(x,t)$.

Compared with the system \eqref{system1}, the system \eqref{system} incorporates coupled cascading taxis mechanisms, which may endow it with richer spatiotemporal dynamics. From a mathematical perspective, however, the energy structure applicable to \eqref{system1} breaks down for \eqref{system} owing to the additional exploiter coupling. This structural difference substantially increases the difficulty in analyzing global boundedness, long-time behaviors and pattern formation. Accordingly, most existing results concerning \eqref{system} are restricted to the global existence and boundedness of solutions, as well as the global stability of constant steady states in a one-dimensional setting or under appropriate smallness assumptions. In particular, when the initial data $u_0,v_0, w_0$ satisfy the regularity conditions
\begin{equation}\label{IC}
   u_0, w_0\in W^{2,\infty}(\Omega), \ v_0\in W^{1,\infty}(\Omega),\ \ u_0,v_0, w_0 \geq 0,\not\equiv 0 \ \text{in}\ \overline{\Omega},
\end{equation}
Tao and Winkler \cite{11} first proved the global boundedness of classical solutions to \eqref{system} with positive constant $r(x,t)\equiv r$ in one dimension ($n=1$), and further proved that the solutions asymptotically converge to the constant steady state under suitable smallness conditions on either total mass of foragers or that of exploiters, or on the constant $r$. Later, \cite[Theorems 1.1–1.2]{13} extended these boundedness results to higher-dimensional setting ($n\geq 2$), where certain smallness conditions were imposed on the initial data $u_0, w_0$, and the resource term $r(x,t)$, or the taxis sensitivity coefficients $\chi_1,\chi_2$. In addition, \cite{16} established the global existence of generalized solutions to \eqref{system} under an explicit structural relation between $w_0$ and $r(x,t)$, and analyzed the long-time dynamics when $r(x,t)$ possesses suitable temporal decay properties. For more related progress concerning \eqref{system}, we refer to \cite{2,3,4,13,14,15,17} and the references therein.

All the aforementioned theoretical results demonstrate that solutions stabilize toward constant equilibria and fail to generate nontrivial spatial or temporal patterns under existing smallness or decay assumptions. In sharp contrast, numerical simulations in \cite{9} reveal that the sufficiently large constant resource supply $r(x,t)\equiv r>0$  can trigger oscillatory spatiotemporal patterns.
Despite such numerical evidence, a rigorous theoretical framework concerning the existence and stability of non-constant positive time-periodic solutions is still absent. Moreover, most available literature focuses on constant resource renewal rates, whereas more realistic temporally or spatially heterogeneous resource environments have received little attention. In practical ecological settings, resource supplies are generally modulated by seasonal variations, climatic cycles and environmental fluctuations, thereby exhibiting periodic temporal variability. Such heterogeneous resource renewal mechanisms may significantly affect the populations movement and aggregation, potentially leading to complicated spatiotemporal dynamics.

Motivated by these questions, we investigate the existence and dynamics of non-constant positive time-periodic solutions to system \eqref{system} under both homogeneous and heterogeneous environments. Our main goals are summarized as follows:
\begin{itemize}
\item Establish the existence of positive periodic solutions under both temporally periodic and homogeneous resource renewal rate;
\item Investigate the uniqueness and stability of such positive periodic solutions;
\item Reveal how different resource environments affect the dynamics and pattern formation mechanisms.
\end{itemize}

Throughout this paper, we let $\Omega\subset\mathbb{R}^n$ ($n\geq 1$) be a bounded domain with a smooth boundary, and make the following basic hypothesis:
\begin{itemize}
\item[(H)] The function $r(x,t)\in C^{\alpha_0,\frac{\alpha_0}{2}}(\overline{\Omega}\times[0,\infty))$ satisfies $r(x,t)\geq,\not\equiv0$ and $r(x,t)=r(x,t+T)$, where the constants $\alpha_0 \in (0,1)$ and $T>0$.
\end{itemize}
For convenience, we denote
$$0<r_*:=\sup_{\overline{\Omega}\times[0,\infty)} r(x,t).$$
Moreover, the initial data $(u_0,v_0,w_0)$ are assumed to satisfy  \eqref{IC}.

\vspace{2mm}
The remaining part of this paper is organized as follows. Section \ref{MR} will declare main results. In Section \ref{timeperiodic}, we establish the existence, uniqueness and the global stability  of non-constant positive periodic solution in time-periodic environment. In Section \ref{HP}, we study Hopf bifurcations, which reveals the existence of time-periodic solutions in homogeneous environment. Finally, in Section \ref{NS}, some numerical simulations are conducted to extend our theoretical results.

\section{Main results}\label{MR}
This paper focuses on the existence and dynamics of non-constant positive time-periodic solution in both homogeneous and heterogeneous environments. 
\subsection{Time-periodic heterogeneous environments} 
Under time-periodic heterogeneous environments, periodic oscillations may be induced directly by the periodic variation of the external resource supply $r(x,t)$. A fundamental question is therefore whether the system \eqref{system} admits positive time-periodic solutions inheriting the temporal periodicity of the environment. To answer this question, we further suppose that
\begin{itemize}
\item[(H0)] the hypothesis $(\operatorname{H})$ holds,  and $r(x,t)$ is spatially and temporally heterogeneous. 
\end{itemize}
\vspace{1.5mm}

We first consider whether there exists a positive $T$-periodic solution to \eqref{system} with time-periodic $r(x,t)$, which satisfies
\begin{equation}\label{periodic}
\begin{cases}
U_t=\Delta U-\chi_1\nabla \cdot( U\nabla W), &x\in\Omega, \ t>0,\\
 V_t=\Delta  V-\chi_2\nabla \cdot(V\nabla  U),&x\in\Omega, \ t>0,\\
 W_t=d\Delta  W-\lambda( U+ V) W-\mu  W+r(x,t),&x\in\Omega, \ t>0,\\
\nabla { U}\cdot\nu=\nabla{ V}\cdot \nu=\nabla{ W}\cdot{\nu}=0,& x\in\partial\Omega,~t>0,\\
( U, V ,  W)(x,t) =( U, V ,  W)(x,t+T), &x\in\Omega,t\geq 0.
\end{cases}    
\end{equation}

The following theorem gives a positive answer.
\begin{theorem}\label{GBSp} 
Let $\operatorname{(H0)}$ hold and $r(x,t)>0$. Then the system \eqref{periodic} admits at least one non-constant positive $T$-periodic solution $( U, V, W) \in \big[C^{2+\alpha,1+\frac{\alpha}{2}}(\overline{\Omega}\times[0,\infty))\big]^3$  satisfying
\[
\|( U, V, W)(\cdot,t)\|_{C^{2+\alpha,1+\frac{\alpha}{2}}(\overline{\Omega}\times[0,\infty))}\le C,
\]
where constants $\alpha\in(0,1)$, and $C>0$ is independent of $t$.
\end{theorem}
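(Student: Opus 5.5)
We will obtain the periodic solution as a fixed point of a Poincaré-type map, or more robustly through a Leray--Schauder degree/Schauder fixed point argument on a space of $T$-periodic Hölder functions. The key structural facts are that $\int_\Omega U$ and $\int_\Omega V$ are conserved along the flow, so the masses must be prescribed. We therefore fix $m_1,m_2>0$ and look for periodic solutions with $\int_\Omega U=m_1$ and $\int_\Omega V=m_2$. We also use that $W$ solves a linear equation with a nonnegative source, so $0\le W\le \max\{\|W(\cdot,0)\|_\infty, r_*/\mu\}$ by comparison.

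\emph{Fixed-point setting.} Let $X=\{(\tilde U,\tilde V)\in [C^{\beta,\beta/2}(\overline\Omega\times\mathbb R)]^2: T\text{-periodic}, \ \tilde U,\tilde V\ge0,\ \int_\Omega\tilde U=m_1,\ \int_\Omega \tilde V=m_2\}$. Given $(\tilde U,\tilde V)$, we first solve the linear periodic problem
\[
W_t=d\Delta W-\lambda(\tilde U+\tilde V)W-\mu W+r.
\]
It has a unique positive $T$-periodic solution because the operator has positive principal periodic eigenvalue thanks to $\mu>0$. Schauder theory gives a $C^{2+\beta,1+\beta/2}$ bound depending only on $\|\tilde U+\tilde V\|_{C^{\beta}}$, and $W\le r_*/\mu$.

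Next we solve the linear drift--diffusion equation $U_t=\Delta U-\chi_1\nabla\cdot(U\nabla W)$. By the Krein--Rutman theorem applied to its period map, whose principal Floquet multiplier is $1$ because mass is conserved, it has a unique positive $T$-periodic solution of mass $m_1$. We then solve $V_t=\Delta V-\chi_2\nabla\cdot(V\nabla U)$ in the same way with mass $m_2$. Composing these steps defines a compact map $\Phi:X\to X$ via parabolic Hölder estimates.

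\emph{A priori bounds (main obstacle).} To apply Schaefer's (Leray--Schauder) theorem we need a uniform bound for fixed points of $\sigma\Phi$, $\sigma\in[0,1]$, in $C^{\beta,\beta/2}$. For $W$ we already have $0\le W\le r_*/\mu$ independent of $U,V$, and since $\lambda(U+V)W\ge0$, $W$ is a subsolution of $W_t=d\Delta W-\mu W+r$. The hard step is obtaining a bound on $\nabla W$ that is independent of $U$, since $U$ is only controlled in $L^1$. I would first try maximal Sobolev regularity over one period combined with a bootstrap: from $\|U+V\|_{L^1}$ and the $L^\infty$ bound on $W$, derive $L^p$ bounds for $U$ through the energy $\int U^p$, using $\int|\nabla W|^{2p}$ estimates as in the boundedness theory cited in \cite{13}.

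If the bootstrap does not close in higher dimensions, I would fall back on the mechanism we expect to be essential, namely that periodicity lets one iterate over a period. Since a periodic solution is an entire solution, the known global boundedness results for the Cauchy problem, taken as given from the earlier literature, together with smoothing estimates bound $U,V$ in $L^\infty$ and then in $C^{2+\alpha,1+\alpha/2}$. This yields the uniform bound $C$ in the statement.

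\emph{Positivity and non-constancy.} Positivity of $U,V$ follows from Krein--Rutman, or equivalently the strong maximum principle. Positivity of $W$ follows from $r>0$. Non-constancy follows directly from (H0). If $W$ were spatially constant, then $U$ would solve the heat equation and be a periodic solution with constant mass, hence constant, and likewise $V$. The $W$-equation would then force $r$ to be spatially constant, contradicting spatial heterogeneity. Hence $(U,V,W)$ is a non-constant positive $T$-periodic solution with the stated Hölder bound, and the Hölder exponent $\alpha$ comes from the Schauder estimates together with the $C^{\alpha_0,\alpha_0/2}$ regularity of $r$.
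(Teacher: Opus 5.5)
\textbf{Gap: the a priori estimate.} The step you flag as the main obstacle is the heart of the proof, and neither of your suggestions closes it. With only $\int_\Omega U=m_1$, $\int_\Omega V=m_2$ and $0\le W\le r_*/\mu$, you have no mechanism to start the bootstrap in general dimension. The $L^p$ estimate for $U$ needs control of $\int_\Omega U^p|\nabla W|^2$, hence of something like $\sup_t\int_\Omega|\nabla W|^{2n+2}$. But the differential inequality for $\int_\Omega|\nabla W|^{2n+2}$ carries $\int_\Omega(U^{n+2}+V^{n+2})$ and $\int_\Omega|\nabla W|^{2n+4}$ on its right-hand side, coming from the consumption term $\lambda(U+V)W$. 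So you need an independent bound on $U,V$ in $L^{n+2}(Q_T)$, and mass conservation does not give one. The entropy structure that yields such control for the single-species model (and only for $n\le 2$) breaks down under the exploiter coupling.

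Your fallback to the Cauchy problem fails for three reasons:
\begin{itemize}
\item Global boundedness is known only for $n=1$, or under smallness assumptions on the data, on $r$, or on $\chi_1,\chi_2$ \cite{11,13}. The theorem is claimed for every $r>0$ and every $n$.
\item Those bounds depend on norms of the initial data, not only on the masses, so applying them to an unknown periodic orbit is circular.
\item Fixed points of the homotopy $\sigma\Phi$ are not solutions of the original system anyway.
\end{itemize}
In addition, $\sigma\Phi$ does not map your set $X$ into itself, since it rescales the masses to $\sigma m_1,\sigma m_2$. So Schaefer's theorem does not apply as written.

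\textbf{What the paper does instead.} The missing idea is a regularization that manufactures the $L^{n+2}(Q_T)$ bound. The paper adds $\varepsilon U^{n+1}(1-U)$ and $\varepsilon V^{n+1}(1-V)$, and homotopes via the terms $-A_i(1-\sigma)U$ and $-A_i(1-\sigma)V$. Integrate the $U$-equation over $Q_T$: periodicity removes the time derivative and the Neumann condition removes the divergence terms. This leaves $\sigma\varepsilon\int_0^T\int_\Omega U^{n+2}\le\sigma\varepsilon\int_0^T\int_\Omega U^{n+1}$, so $\int_0^T\int_\Omega U^{n+2}\le C$ uniformly in $\sigma$ and $\varepsilon$, and likewise for $V$.

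From there the paper obtains a $C^{2+\alpha,1+\alpha/2}$ bound independent of $\varepsilon$ using:
\begin{itemize}
\item maximal $W^{2,1}_{n+2}$ regularity for $W$ and Gagliardo--Nirenberg;
\item Lemma \ref{BDSS} and a Moser iteration for $U$;
\item the analogous steps for $\nabla U$ and $V$, followed by Schauder estimates.
\end{itemize}

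The price is that mass conservation is lost, so the masses cannot be prescribed and the zero solution must be excluded by degree theory. A second homotopy with forcing $\varepsilon\eta$ shows the degree on $D_{1/2}$ is zero. Excision then yields a solution in $B_{\widetilde M+1}\setminus\overline{D}_{1/2}$, and one lets $\varepsilon\to0$.

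Your fixed-mass formulation would give a stronger result: a periodic solution for every pair of masses, which would make Theorem \ref{GBSps} usable for arbitrary initial masses. But it works only if a uniform estimate without regularization is available, and that is exactly what is missing. Your positivity and non-constancy arguments are fine.
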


More importantly, one is interested in the asymptotic stability of such a $T$-periodic solution, which ensures that the periodic patterns can sustain over time. The following theorem shows a stability result under specific circumstances.
\begin{theorem}\label{GBSps}
Let $( U, V, W)$ be a $T$-periodic solution obtained in Theorem \ref{GBSp}, and $ (u,v,w) $ be a global classical solution of the system \eqref{system} with
\begin{equation}\label{uvb*}
\|u(\cdot,t)\|_{L^\infty}+\|v(\cdot,t)\|_{L^\infty}\leq K_0:=K_0(r_*),
\end{equation}
and whose initial data $u_0,v_0$ fulfill
\begin{equation}\label{IC-A}
\int_\Omega u_0dx=\int_\Omega  U(x,t)dx, \quad \int_\Omega v_0dx=\int_\Omega  V(x,t)dx.
\end{equation}
Then there exists a constant \( r_0\in (0,1) \) such that when \( 0 < r_* \le r_0 \), the \( T \)-periodic solution \( ( U, V, W) \) is unique, and globally asymptotically stable in the following sense:
\[\|u(\cdot,t)- U(\cdot,t)\|_{L^\infty} + \|v(\cdot,t)- V(\cdot,t)\|_{L^\infty} + \|w(\cdot,t)- W(\cdot,t)\|_{L^\infty}\leq C_0e^{-\kappa t}, \forall t\geq2.
\]
Here $K_0(r_*)\geq 1$ is a constant increasing in $r_*$, $C_0>0$ and $\kappa>0$ are constants independent of $t$.
\end{theorem}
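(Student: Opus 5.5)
We will prove Theorem \ref{GBSps} by an energy/perturbation argument around the periodic solution, using smallness of $r_*$ to make the taxis couplings weak. Set $\tilde u=u-U$, $\tilde v=v-V$, $\tilde w=w-W$. By \eqref{IC-A} and mass conservation, $\int_\Omega\tilde u=\int_\Omega\tilde v=0$ for all $t$, so the Poincaré inequality is available for $\tilde u,\tilde v$.

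\emph{Step 1 (smallness of the resource gradients).} From the $w$- and $W$-equations, comparison gives $0\le w,W\le \max\{\|w_0\|_{L^\infty},r_*/\mu\}$ eventually, and indeed $\limsup_{t\to\infty}\|w\|_{L^\infty}\le r_*/\mu$. Using the uniform bound $\|u\|_{L^\infty}+\|v\|_{L^\infty}\le K_0$ from \eqref{uvb*} together with parabolic (semigroup/Schauder) estimates for the $w$-equation on unit time intervals, we get $\|\nabla w(\cdot,t)\|_{L^\infty}+\|\nabla W(\cdot,t)\|_{L^\infty}\le C(K_0)\,r_*$ for $t\ge 1$ (up to an exponentially decaying term from $w_0$). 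Likewise, by Theorem \ref{GBSp}, $U,V$ are bounded in $C^{2+\alpha,1+\alpha/2}$; we need in addition that $\|\nabla U\|_{L^\infty}$ is small or at least bounded by a constant depending only on $K_0$, which follows from the $U$-equation since its drift $\chi_1\nabla W$ is $O(r_*)$.

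\emph{Step 2 (energy inequality).} Test the $\tilde u$-equation $\tilde u_t=\Delta\tilde u-\chi_1\nabla\cdot(\tilde u\nabla w+U\nabla\tilde w)$ by $\tilde u$, the $\tilde v$-equation $\tilde v_t=\Delta\tilde v-\chi_2\nabla\cdot(\tilde v\nabla u+V\nabla\tilde u)$ by $\tilde v$, and the $\tilde w$-equation $\tilde w_t=d\Delta\tilde w-\lambda(u+v)\tilde w-\lambda W(\tilde u+\tilde v)-\mu\tilde w$ by $\tilde w$ and by $-\Delta\tilde w$. Using Young's inequality, the terms $\chi_1\int\tilde u\nabla w\cdot\nabla\tilde u$ and $\lambda\int W(\tilde u+\tilde v)\tilde w$ carry a factor $O(r_*)$ and are absorbed once $r_*\le r_0$. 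The term $\chi_1\int U\nabla\tilde w\cdot\nabla\tilde u$ is bounded by $\frac14\int|\nabla\tilde u|^2+C K_0^2\int|\nabla\tilde w|^2$; the $\tilde w$ dissipation $d\int|\nabla \tilde w|^2+\mu\int\tilde w^2$ does not carry a large coefficient, so we use a weighted functional $\mathcal E=\int\tilde u^2+\varepsilon_1\int\tilde v^2+A\int(\tilde w^2+|\nabla\tilde w|^2)$ with $A$ large; the cross term $\lambda A\int W\tilde u\Delta\tilde w$ is then $O(Ar_*)$ and absorbed for $r_*$ small. The dangerous term is $\chi_2\int V\nabla\tilde u\cdot\nabla\tilde v\le \frac12\int|\nabla\tilde v|^2+C K_0^2\int|\nabla\tilde u|^2$, handled by choosing $\varepsilon_1$ small (so $\varepsilon_1 CK_0^2\le\frac14$); also $\chi_2\int\tilde v\nabla u\cdot\nabla\tilde v$ needs $\|\nabla u\|_{L^\infty}$ bounded, which we obtain from Step 1 and parabolic regularity. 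With Poincaré we arrive at $\mathcal E'+\kappa\mathcal E\le 0$ for $t\ge1$, giving exponential decay in $L^2$/$H^1$.

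\emph{Step 3 (upgrade and uniqueness).} Interpolating the exponential $L^2$ decay with uniform $C^{1+\alpha}$ bounds (Gagliardo–Nirenberg) yields the $L^\infty$ decay for $t\ge2$. Uniqueness follows by applying the estimate with $(u,v,w)$ another $T$-periodic solution (which satisfies \eqref{uvb*} by its regularity and has the same masses as required); periodicity plus decay forces equality. The main obstacle is Step 2: the coupling constants in $\chi_1U\nabla\tilde w$ and $\chi_2V\nabla\tilde u$ are not small (only $K_0$-bounded), so the choice of weights $\varepsilon_1,A$ must be done in the right order, with $r_0$ chosen last depending on $K_0(r_*)$ — which is consistent because $K_0$ is increasing, so $K_0(r_*)\le K_0(1)$ for $r_*\le r_0<1$.
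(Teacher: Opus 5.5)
\textbf{There is a genuine gap: your Step~2 does not close, and it fails in the $\tilde v$-estimate.}

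With your splitting $v\nabla u-V\nabla U=\tilde v\nabla u+V\nabla\tilde u$, the term $\chi_2\int_\Omega\tilde v\nabla u\cdot\nabla\tilde v$ can only be bounded by $\delta\int_\Omega|\nabla\tilde v|^2+\frac{\chi_2^2}{4\delta}\|\nabla u\|_{L^\infty}^2\int_\Omega\tilde v^2$. This $\int_\Omega\tilde v^2$ must be absorbed by the Poincar\'e dissipation of the $\tilde v$-equation itself, because nothing else in $\mathcal E$ damps $\tilde v$. Multiplying the $\tilde v$-part by $\varepsilon_1$ rescales both sides equally. So boundedness of $\|\nabla u\|_{L^\infty}$ is \emph{not} enough: you need $\chi_2^2c_P\|\nabla u\|_{L^\infty}^2$ small, where $c_P$ is the Poincar\'e constant. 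Neither your Step~1 nor parabolic regularity gives that; the available bound (cf.\ Lemma \ref{lem:holder}) is of order one as $r_*\to0$. Writing $\nabla u=\nabla U+\nabla\tilde u$ does not help either: it only produces the cubic term $\chi_2\int_\Omega\tilde v\nabla\tilde u\cdot\nabla\tilde v$, which you cannot control globally. Your uniqueness argument inherits the same defect.

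\textbf{The missing idea} is to split so that the coefficients multiplying $\tilde u,\tilde v$ are gradients of the \emph{periodic} solution:
$u\nabla w-U\nabla W=u\nabla\tilde w+\tilde u\nabla W$ and $v\nabla u-V\nabla U=v\nabla\tilde u+\tilde v\nabla U$.
Then the merely bounded factors $u,v$ sit in front of $\nabla\tilde w,\nabla\tilde u$. These are handled by cascade weights exactly as you intend (small weight on $\tilde v$, large weight on $\tilde w$). Meanwhile the coefficients of $\int_\Omega\tilde u^2$ and $\int_\Omega\tilde v^2$ become $\|\nabla W\|_{L^\infty}^2$, $\|W\|_{L^\infty}^2$ and $\|\nabla U\|_{L^\infty}^2$. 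This also removes the $w_0$-dependent waiting time hidden in your Step~1.

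The genuine work is then Lemma \ref{grad_u_decay}: $\sup_t\|\nabla U\|_{L^\infty}\le Cr_*^{1/(n+2)}$ with $C$ independent of $r_*$. Proving it uses:
\begin{itemize}
\item $r_*$-independent bounds on $\|U\|_{L^\infty},\|V\|_{L^\infty},\|\Delta W\|_{L^\infty}$ as in \eqref{UWB};
\item the bound $\|\nabla W\|_{L^\infty}\lesssim r_*^{1/2}$;
\item an $L^2$-energy estimate giving $\sup_t\|\nabla U\|_{L^2}^2\lesssim r_*$;
\item Gagliardo--Nirenberg interpolation.
\end{itemize}
Your remark that the $U$-drift is $O(r_*)$ points this way, but you conclude only boundedness. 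Keeping your splitting would instead require proving eventual smallness of $\|\nabla u\|_{L^\infty}$ (e.g.\ from the $L^2$-energy of $u-\bar u$, a uniform $C^{1+\beta}$ bound on $u$, and interpolation), which you do not supply.

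\textbf{Two smaller points.}
\begin{itemize}
\item The $|\nabla\tilde w|^2$ part of $\mathcal E$, carried with the same weight $A$, produces $A\lambda\int_\Omega(u+v)\tilde w\Delta\tilde w$. This costs a term of size $A\lambda^2K_0^2d^{-1}\int_\Omega\tilde w^2$, which $A\mu\int_\Omega\tilde w^2$ need not absorb. It is also unnecessary, since the $L^2$ test already gives $Ad\int_\Omega|\nabla\tilde w|^2$.
\item No uniform $C^{1+\alpha}$ bound for $v$ is available. However, the H\"older bounds of Lemma \ref{lem:holder} suffice for the $L^2\to L^\infty$ upgrade.
\end{itemize}
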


\begin{remark}

\em{By slightly modifying the proofs of \cite[Theorems 1.1-1.2]{13} and of \cite[Theorem 1.1]{11}, one can establish the existence and uniqueness of global classical solutions $(u,v,w)$ to \eqref{system} for  $n\geq 1$ and find a constant $K_0(r_*)\geq 1$ increasing in $r_*$ such that \eqref{uvb*} holds.
}
\end{remark}

\begin{remark}
\em{The condition \eqref{IC-A} is natural due to the conservation structure of the system \eqref{system}.}
\end{remark}
In fact, the existence of non-constant positive time-periodic solutions established above is derived using the topological degree theory. But, such technique does not directly apply when the resource renewal rate $r(x,t)$ is constant, as it cannot rule out the existence of positive constant steady states. We thus employ Hopf bifurcation theory to address this scenario.
\subsection{Homogeneous environments}
We now consider the case where the resource supply $r(x,t)$ is spatially and temporally constant. In contrast to heterogeneous environments, periodic oscillations in this setting, if they exist, must arise from intrinsic dynamical instability rather than external forcing. In this circumstance, $r(x,t)\equiv r$ ($r$ is a positive constant), then the system \eqref{system} admits a unique positive constant steady state $(\bar{u}_0,\bar{v}_0,w_c)$ with
$$w_c:=\frac{r}{\lambda(\bar{u}_0+\bar{v}_0)+\mu},$$
where the notation $\bar{f}:=\frac{1}{|\Omega|}\int_\Omega f dx$. To state our results clearly, we make some preparations. Let the sequence $\{\sigma_m\}_{m\geq0}: 0=\sigma_0<\sigma_1\leq \sigma_2\leq \sigma_3\dots $ (counting multiplicities, i.e., repeating each eigenvalue as many times as its multiplicity) denote the sequence of eigenvalues of $-\Delta$ under Neumann boundary condition. Then we define
\begin{align}
\label{rhm}
r_m^H&:=r^H(\sigma_m):= \frac{2[(d+1)\sigma_m+\lambda(\bar{u}_0+\bar{v}_0)+\mu]^2[\lambda(\bar{u}_0+\bar{v}_0)+\mu]}{\lambda \chi_1\bar{u}_0[\sigma_m(\chi_2\bar{v}_0-1-d)-\lambda(\bar{u}_0+\bar{v}_0)-\mu]},\\
\label{q*}
q_*&:=\frac{[2(d+1)+\chi_2\bar{v}_0-d-1][\lambda(\bar{u}_0+\bar{v}_0)+\mu]}{(d+1)(\chi_2\bar{v}_0-d-1)},\\
\label{q0}
q_0&:=\frac{\lambda(\bar{u}_0+\bar{v}_0)+\mu}{\chi_2\bar{v}_0-1-d}.\\ \notag
\end{align}
The following results establish the existence of positive time-periodic solutions to \eqref{system} in homogeneous environments.
\begin{theorem}[Hopf bifurcations]\label{LHP} 
Let $\chi_1,\chi_2,\lambda,\mu,d,\bar{v}_0,\bar{u}_0$ be fixed with $\chi_2\bar{v}_0>d+1$, $r_m^H$ be defined in \eqref{rhm}. Assume that for some $j\in\mathbb{Z}^+$, $\sigma_j$ is a simple eigenvalue of $-\Delta$ under Neumann boundary condition and the eigenvalue $\sigma_j$ satisfies $\sigma_j\geq q_*.$ Then the following results hold:
\begin{itemize}
\item [(1)] $(\bar{u}_0,\bar{v}_0,w_c)$ is linearly stable if  $r<\min\limits_{m_0\in Q_0}\{r_{m_0}^H\}$;
\item [(2)] $(\bar{u}_0,\bar{v}_0,w_c)$ is linearly unstable if $r>\min\limits_{m_0\in Q_0}\{r_{m_0}^H\}$; moreover, the system \eqref{system} undergoes a  Hopf bifurcation near $(\bar{u}_0,\bar{v}_0,w_c)$ at $r=r_j^H$ for any $j\in Q_*$.
\end{itemize}
Here, the sets
\begin{equation}\label{Q0*}
Q_0:=\big\{m_0\in\mathbb{Z}^+|\sigma_{m_0}>q_0\big\},\ \ Q_*:=\big\{j\in\mathbb{Z}^+|\sigma_{j}>q_*\big\}.
\end{equation}
with $q_0$ and $q_*$ being defined in \eqref{q0} and \eqref{q*}, respectively.
\end{theorem}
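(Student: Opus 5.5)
The plan is to linearize \eqref{system} at $(\bar u_0,\bar v_0,w_c)$ with $r$ as the bifurcation parameter, reduce the spectral problem to a cubic characteristic polynomial in each Neumann mode, and use the Routh--Hurwitz criterion for stability together with the classical Hopf bifurcation theorem (Crandall--Rabinowitz type) for the bifurcation. Write $a:=\lambda(\bar u_0+\bar v_0)+\mu$, so $w_c=r/a$. Perturbing $u=\bar u_0+\phi$, $v=\bar v_0+\psi$, $w=w_c+\eta$ and keeping linear terms gives
\[
\phi_t=\Delta\phi-\chi_1\bar u_0\Delta\eta,\quad \psi_t=\Delta\psi-\chi_2\bar v_0\Delta\phi,\quad \eta_t=d\Delta\eta-\lambda w_c(\phi+\psi)-a\eta .
\]
I will expand $(\phi,\psi,\eta)$ in the Neumann eigenfunctions of $-\Delta$. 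On the $\sigma_m$-mode this yields a $3\times3$ matrix $J_m$ with characteristic polynomial $\rho^3+A_m\rho^2+B_m\rho+C_m$. The mode $m=0$ is special because mass conservation forces a double zero eigenvalue. It is handled by restricting to the invariant subspace of perturbations with zero mean in $\phi,\psi$, which is natural since $\bar u_0,\bar v_0$ are fixed; on that subspace the remaining eigenvalue is $-a<0$.

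For $m\ge1$ I will compute
\[
A_m=(d+2)\sigma_m+a,\qquad B_m=(\sigma_m+d\sigma_m+a)\sigma_m-\lambda w_c\chi_1\bar u_0\sigma_m,\qquad C_m=\sigma_m^2\bigl[(d\sigma_m+a)-\lambda w_c\chi_1\bar u_0(1-\chi_2\bar v_0)\bigr].
\]
These are to be re-derived carefully, but the key feature is that $\chi_2\bar v_0>0$ makes $C_m>0$. Routh--Hurwitz then reduces stability of mode $m$ to $A_m>0$, $C_m>0$, and $A_mB_m-C_m>0$. The quantity $A_mB_m-C_m$ is affine and decreasing in $r$ (through $w_c=r/a$). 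Its coefficient of $r$ has a definite sign exactly when $\sigma_m(\chi_2\bar v_0-1-d)>a$, that is, when $\sigma_m>q_0$. Solving $A_mB_m-C_m=0$ for $r$ should reproduce $r=r^H_m$ in \eqref{rhm}. For $\sigma_m\le q_0$ the expression stays positive for every $r>0$. Hence the steady state is stable iff $r<r^H_m$ for all $m\in Q_0$, which gives part (1) and the instability claim of (2). I also need to check that $\inf_{m\in Q_0}r^H_m$ is attained, which holds because $r^H(\sigma)\to\infty$ as $\sigma\to\infty$.

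For the bifurcation at $r=r^H_j$ with $j\in Q_*$, at that value the cubic factors as $(\rho+A_j)(\rho^2+B_j)$, giving purely imaginary eigenvalues $\pm i\sqrt{B_j}$, where $B_j>0$ follows from $C_j=A_jB_j>0$. These eigenvalues are simple because $\sigma_j$ is a simple eigenvalue of $-\Delta$. Non-resonance requires that no other mode has eigenvalues in $\{\pm ik\sqrt{B_j}\}$ or $0$. For $m\ne j$, $C_m\ne0$ excludes zero, and purely imaginary roots in mode $m$ would force $r=r^H_m$. Here I would either invoke a genericity assumption or note that the Hopf theorem only requires simplicity of the crossing pair, provided no other imaginary eigenvalues occur at that particular parameter value.

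The transversality condition is $\mathrm{Re}\,\rho'(r^H_j)\neq0$. Differentiating the cubic at $\rho=i\omega$ gives
\[
\mathrm{Re}\,\rho'=-\frac{(A_jB_j-C_j)'}{2(\omega^2+A_j^2)}\cdot(\ldots),
\]
so it is nonzero as soon as $\tfrac{d}{dr}(A_jB_j-C_j)\ne0$, which holds for $\sigma_j>q_0$.

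I expect the main obstacle to be the role of $q_*$, which is strictly larger than $q_0$. It presumably arises from the monotonicity of $r^H(\sigma)$, or from requiring $B_j>0$ together with an explicit transversality sign. I would try to show that for $\sigma\ge q_*$ the function $r^H$ is increasing, or at least that the crossing has the right sign, by differentiating \eqref{rhm} in $\sigma$. After that, the abstract Hopf theorem in the space $X=\{(\phi,\psi,\eta)\in (W^{2,p})^3:\ \partial_\nu=0,\ \int\phi=\int\psi=0\}$ applies, using analytic-semigroup regularity of the sectorial linear operator, and yields the bifurcating periodic branch.
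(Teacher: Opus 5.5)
Your treatment of parts (1)--(2) essentially follows the paper: the Neumann mode decomposition, the zero-mean restriction that removes the $\sigma_0$ mode, Routh--Hurwitz, solving $A_1A_2-A_3=0$ for $r$, and attainment of the minimum. Your transversality argument also matches. Your formula $\operatorname{Re}\rho'(r_j^H)=-\partial_r(A_1A_2-A_3)/\bigl(2(A_1^2+A_2)\bigr)$ is exactly what the paper obtains from Vieta's relations, and it is positive whenever $\sigma_j>q_0$.

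Your coefficients, however, are wrong and must be fixed first. With $L:=\lambda\chi_1\bar u_0w_c$ one has $A_2=(1+2d)\sigma_m^2+\sigma_m(2a+L)$ and $A_3=d\sigma_m^3+\sigma_m^2[a+L(1+\chi_2\bar v_0)]$. These give
\[
A_1A_2-A_3=\sigma_m\Bigl\{2[(d+1)\sigma_m+a]^2-\frac{\lambda\chi_1\bar u_0\,r}{a}\bigl(\sigma_m(\chi_2\bar v_0-d-1)-a\bigr)\Bigr\},
\]
and hence \eqref{rhm}. With your signs, the coefficient of $r$ in $A_mB_m-C_m$ is negative in every mode. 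That contradicts your own (correct) claim that modes with $\sigma_m\le q_0$ are stable for all $r$.

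The genuine gap is the spectral non-degeneracy that the Hopf theorem requires. You must show two things: that $\pm i\xi_0$, with $\xi_0=\sqrt{A_2(r_j^H,\sigma_j)}$, are simple eigenvalues of the full linearized operator, and that no $ik\xi_0$ with $k\in\mathbb{Z}\setminus\{\pm1\}$ is an eigenvalue. A ``genericity assumption'' is not among the hypotheses, and ``provided no other imaginary eigenvalues occur'' is exactly what has to be proved. Simplicity of $\sigma_j$ only controls mode $j$; any mode $m$ with $r_m^H=r_j^H$ also carries a purely imaginary pair at $r=r_j^H$.

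This is where $\sigma_j\ge q_*$ is used. It is not needed for $A_2>0$ or for the sign of the crossing, both of which hold for every $\sigma_m>q_0$. The paper argues via Lemma~\ref{ddx}: $r^H$ is strictly increasing on $[q_*,\infty)$, so $r_m^H>r_j^H$ whenever $\sigma_m>\sigma_j$, and those modes have no imaginary eigenvalues at $r=r_j^H$.

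Monotonicity alone, though, does not exclude $r_m^H=r_j^H$ for some $\sigma_m\in(q_0,q_*)$, since $r^H$ decreases from $+\infty$ on that interval. To finish, use that $\sigma\mapsto A_2(r,\sigma)$ is strictly increasing. Any imaginary pair of a mode with $\sigma_m<\sigma_j$ is then $\pm i\sqrt{A_2(r_j^H,\sigma_m)}$, of modulus strictly less than $\xi_0$, so it is not of the form $ik\xi_0$ with $k\neq0$. The case $k=0$ is excluded by $A_3>0$ and the zero-mean constraint. Together with the simplicity of $\sigma_j$ and the distinct roots $-A_1,\pm i\xi_0$ in mode $j$, this gives simplicity and non-resonance, and the abstract Hopf theorem then applies.
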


\begin{remark}
\em{We emphasize that the assumptions of Theorem \ref{LHP} are not vacuous. For example, set $\Omega=(0,l)$ for some constant $l>0$, then $\sigma_j=\big(\frac{j\pi}{l}\big)^2$ is a simple eigenvalue. We further let the spatial averages $\frac{1}{l}\int_0^l u_0(x)dx$, $\frac{1}{l}\int_0^l v_0(x)dx$ be fixed constants, then $q_*$ defined in \eqref{q*} is independent of $l$. For fixed parameters $\chi_1,\chi_2,\lambda,\mu,d,j$, we may select $l$ such that $(\frac{j\pi}{l})^2\geq q_*$, thereby confirming that the assumption $\sigma_j\geq q_*$ holds.}
\end{remark}

\begin{remark}
\em{Theorem \ref{LHP} identifies the exact Hopf bifurcation point. 
This result confirms numerical observations in \cite{9} on spatiotemporal patterns in one and two dimensions, and further addresses the open problem raised therein regarding the behavior of \eqref{system} in higher dimensions (i.e., $n\geq 3$). Moreover, Theorem \ref{LHP} complements the result in \cite{11}, which essentially shows that for fixed $\int_\Omega u_0dx$ and $\int_\Omega v_0dx$,  $(\bar{u}_0,\bar{v}_0,w_c)$ is globally asymptotically stable when $r$ is small.} 
\end{remark}

\begin{remark}
\em{ Regarding the stability of periodic solutions arising from $(\bar{u}_0,\bar{v}_0,w_c)$, we observe the following phenomena: for any $j<j_0$ with some $j_0\in Q_*$, when $r_j^H<r<r_{j_0}^H$, the corresponding periodic solution is globally attractive; however, bistability occurs as $r>r_{j_0}^H$; see  Subsection \ref{NRH} for the numerical evidence.
}
\end{remark}

\begin{remark}
\em{The conditions for admitting time-periodic solutions differ strikingly between heterogeneous and homogeneous environments. In the time-periodic environment, a time-periodic solution exists whenever $r(x,t) > 0$, whereas in the homogeneous case, a time-periodic solution can arise only when $r$ is large. Regarding stability, periodic solutions in the heterogeneity setting are stable under the smallness condition on $r(x,t)$. Thus, analyzing stability in the homogeneous case requires new analytical techniques and remains open.}
\end{remark}

In what follows, we shall abbreviate $\int_\Omega f dx$,  $\|f\|_{L^p(\Omega)}$ and $\int_0^T \|f\|_{L^p(\Omega)}ds$ as $\int_\Omega f$, $\|f\|_{L^p}$ and $\int_0^T \|f\|_{L^p}$, respectively.  The symbols $c_i$, $C_i,~M_i (i=1,2,3\cdots)$ are used to denote generic positive constants which are independent of $t$ and may vary in the context. 
Moreover, we set $Q_T:=\Omega\times(0,T)$ and $\overline{Q}_T:=\overline{\Omega}\times[0,T]$.

\section{Time-periodic environments: proof of Theorems \ref{GBSp}-\ref{GBSps}}\label{timeperiodic}
In this section, we will establish the existence and global stability of $T$-periodic solutions for \eqref{system} with $T$-periodic $r(x, t)$.
We first recall a technical lemma, whose proof follows directly from \cite{Amann, JIN-ZAMP-2017, JIN-PRSE-2020}.
\begin{lemma}\label{LT}
Let $a_1>0$ and $a_2>0$ be constants, and let $f(x,t)\in L_{\mathrm{loc}}^{p}([0,\infty);L^{p}(\Omega))$ be a $T$-periodic function with $1<p<\infty$. Then the following system
\begin{equation*}
\begin{cases}
z_t- a_1\Delta z + a_2 z=f(x,t),&x\in\Omega,t>0,\\
\nabla z \cdot\nu=0,\ \ &x\in\partial\Omega,t>0,\\
z(x,t)=z(x,t+T),\ \ &x\in{\Omega},t\geq0,
\end{cases} 
\end{equation*}
admits a unique strong $T$-periodic solution $z(x,t)\in W^{2,1}_{p}(Q_T)$ satisfying 
\begin{equation}\label{LT1}
\|z\|_{W_{p}^{2,1}(Q_T)}
\leq c_1\|f\|_{L^{p}(Q_T)}.
\end{equation}
Furthermore, if $f\in C^{\alpha,\frac{\alpha}{2}}(\overline{Q}_T)$, then it holds that
\begin{equation*}
\|z\|_{{C^{2+\alpha,1+\frac{\alpha}{2}}}(\overline{Q}_T)}\leq c_2\|f\|_{{C^{\alpha,\frac{\alpha}{2}}}(\overline{Q}_T)}.
\end{equation*}
\end{lemma}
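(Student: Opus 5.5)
The plan is to prove Lemma \ref{LT} in two stages. First, solve the periodic problem through the Poincaré (period) map of the associated initial-value problem. Second, upgrade to the stated estimates using maximal regularity and Schauder theory.

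\emph{Existence and uniqueness.} Let $A=-a_1\Delta+a_2$ with homogeneous Neumann conditions, realized in $L^p(\Omega)$. It is sectorial and generates an analytic semigroup $e^{-tA}$. Since $\sigma(A)\subset[a_2,\infty)$, this semigroup is exponentially decaying: $\|e^{-tA}\|_{\mathcal L(L^p)}\le Me^{-a_2' t}$ for some $a_2'\in(0,a_2]$. By maximal $L^p$-regularity (Amann), for every $z_0$ in the trace space $X_p:=(L^p,D(A))_{1-1/p,p}$ the problem $z_t+Az=f$ on $(0,T)$ with $z(0)=z_0$ has a unique solution $z\in W^{2,1}_p(Q_T)$. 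Its norm is bounded by $C(\|f\|_{L^p(Q_T)}+\|z_0\|_{X_p})$. A $T$-periodic solution corresponds to a fixed point of
\[
z_0\mapsto e^{-TA}z_0+\int_0^Te^{-(T-s)A}f(s)\,ds.
\]
Here $1$ is not in the spectrum of $e^{-TA}$, because $\sigma(e^{-TA})\setminus\{0\}=e^{-T\sigma(A)}\subset(0,e^{-a_2T}]$. Hence $I-e^{-TA}$ is invertible on $X_p$, and we obtain the unique fixed point
\[
z_0=(I-e^{-TA})^{-1}\int_0^Te^{-(T-s)A}f(s)\,ds.
\]
Maximal regularity shows $\int_0^Te^{-(T-s)A}f(s)\,ds$ lies in $X_p$ with norm $\le C\|f\|_{L^p(Q_T)}$. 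Extending $z$ periodically gives the solution, and by periodicity of $f$ it solves the equation on all of $(0,\infty)$. Uniqueness follows because the difference of two periodic solutions is $e^{-tA}(z_1-z_2)(0)$. This function is $T$-periodic and decays exponentially, so it vanishes.

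\emph{Estimate \eqref{LT1}.} Combining the bound on $z_0$ with the maximal-regularity estimate on $(0,T)$ gives $\|z\|_{W^{2,1}_p(Q_T)}\le c_1\|f\|_{L^p(Q_T)}$. An alternative route avoids semigroups: work with periodic-in-time function spaces directly and test the equation by $|z|^{p-2}z$. Integrating over one period, the time derivative term vanishes by periodicity, which yields $a_2\|z\|_{L^p(Q_T)}\le\|f\|_{L^p(Q_T)}$. One then applies the standard parabolic $W^{2,1}_p$ estimate on $(T,2T)$ with a cutoff in time, using periodicity to move the estimate back to $(0,T)$. I would present the semigroup argument and mention this as a check.

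\emph{Schauder estimate.} Let $f\in C^{\alpha,\alpha/2}(\overline Q_T)$, extended periodically; the extension is Hölder on $\overline\Omega\times\mathbb R$ with the same norm up to a constant. I would use a time cutoff $\eta$ that vanishes near $t=0$ and equals $1$ on $[T,2T]$. Applying the interior-in-time Schauder estimates (Ladyzhenskaya–Solonnikov–Ural'tseva) to $\eta z$ on $\overline\Omega\times[0,2T]$ gives
\[
\|z\|_{C^{2+\alpha,1+\alpha/2}(\overline\Omega\times[T,2T])}\le C\big(\|f\|_{C^{\alpha,\alpha/2}}+\|z\|_{L^\infty(\Omega\times(0,2T))}\big).
\]
It remains to bound $\|z\|_{L^\infty}\le C\|f\|_{C^{\alpha,\alpha/2}}$. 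For this, choose $p>(n+2)/2$ in \eqref{LT1}, so that $W^{2,1}_p(Q_T)$ embeds into $C(\overline Q_T)$, and use $\|f\|_{L^p}\le C\|f\|_{C^{\alpha,\alpha/2}}$. Alternatively, the comparison principle gives $\|z\|_\infty\le\|f\|_\infty/a_2$ directly. Periodicity then transfers the estimate from $[T,2T]$ to $\overline Q_T$.

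The main obstacle is the careful justification of invertibility of $I-e^{-TA}$ on the trace space $X_p$ rather than on $L^p$. I will handle this via the spectral mapping theorem for analytic semigroups. Since the paper cites \cite{Amann, JIN-ZAMP-2017, JIN-PRSE-2020}, this step can largely be quoted.
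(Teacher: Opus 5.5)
Your proof is correct. The paper does not actually prove Lemma \ref{LT}; it only quotes it from Amann's periodic-parabolic theory and the cited works of Jin et al., so your write-up is a self-contained version of what the paper outsources.

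The cited theory handles general time-dependent coefficients, which is why the paper also invokes it for \eqref{lw}--\eqref{lv}. Your semigroup argument instead exploits that $-a_1\Delta+a_2$ is autonomous. Combining the Poincar\'e map, maximal $L^p$-regularity with the trace space $X_p$, and Schauder estimates, you obtain both estimates in a more elementary way, with transparent dependence on $(a_1,a_2,p,T,\Omega)$.

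Two minor points. First, the step you flag as the main obstacle is immediate. Write $e^{-tA}=e^{-a_2t}e^{a_1t\Delta}$; the Neumann heat semigroup is an $L^p$-contraction commuting with $A$. Hence $\|e^{-TA}\|_{\mathcal{L}(X_p)}\le e^{-a_2T}<1$ by interpolation, and a Neumann series inverts $I-e^{-TA}$. Alternatively, solve the fixed-point problem in $L^p$ and use smoothing to place $z_0$ in $X_p$.

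Second, in the Schauder step the time cut-off produces the term $\eta' z$. Unless you invoke the genuinely interior form of the estimate, this term must be bounded in $C^{\alpha,\frac{\alpha}{2}}$, not merely in $L^\infty$. The simplest fix is to use \eqref{LT1} with $p>n+2$ together with the embedding $W^{2,1}_p(Q_T)\hookrightarrow C^{1+\beta,\frac{1+\beta}{2}}(\overline{Q}_T)$. This gives $\|z\|_{C^{\alpha,\frac{\alpha}{2}}(\overline{Q}_T)}\le C\|f\|_{C^{\alpha,\frac{\alpha}{2}}(\overline{Q}_T)}$. Uniqueness in $W^{2,1}_p$ then identifies $\eta z$ with the H\"older solution of the cut-off problem.
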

The following lemma provides a key differential inequality which will be used to establish uniform bounds for solutions.
\begin{lemma}[\cite{JIN-PRSE-2020}]\label{BDSS}
Let $T>0, a>0, b\geq0$, and $f:\mathbb{R}^+\to[0,\infty)$ be absolutely continuous. Assume that nonnegative functions $f,h\in L_{\mathrm{loc}}^1([0,\infty))$ are $T$-periodic, and satisfy
\begin{equation*}
    f(t)-f(t_0)+a\int_{t_0}^t f^{1+b}(s)ds\leq\int_{t_0}^t h(s)ds, \ \ \ \text{for any}\ \ \ 0\leq t_0 < t,
\end{equation*}
and
\begin{equation*}
    \int_{0}^T h(s)ds\leq \beta.
\end{equation*}
Then it holds that
\begin{equation*}
    \displaystyle\sup_{t\in(0,T)}f(t)+a\int_0^T f^{1+b}(t)dt\leq \left(\frac{\beta}{aT}\right)^{\frac{1}{1+b}}+2\beta.
\end{equation*}
\end{lemma}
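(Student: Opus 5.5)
This is Lemma \ref{BDSS}, a Gronwall-type estimate for $T$-periodic functions; the proof uses only elementary real analysis and periodicity.

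\emph{Step 1: integral bound.} Take $t_0=0$ and $t=T$ in the hypothesis. Periodicity gives $f(T)=f(0)$, so
\[
a\int_0^T f^{1+b}(s)\,ds\le \int_0^T h(s)\,ds\le\beta .
\]
Hence $\int_0^T f^{1+b}\le \beta/a$. This also supplies the second term on the left of the claimed estimate, bounded by $\beta$.

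\emph{Step 2: a point where $f$ is small.} The mean value of $f^{1+b}$ over $(0,T)$ is at most $\beta/(aT)$. So there is a $t_*\in[0,T]$ with $f^{1+b}(t_*)\le \beta/(aT)$, that is,
\[
f(t_*)\le \left(\frac{\beta}{aT}\right)^{\frac{1}{1+b}}.
\]
Because $f$ is continuous, such a point exists: otherwise $f^{1+b}>\beta/(aT)$ everywhere on $[0,T]$, which contradicts Step 1.

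\emph{Step 3: propagate the bound over a period.} Fix any $t\in(0,T)$. If $t>t_*$, apply the hypothesis with $t_0=t_*$ and drop the nonnegative integral of $f^{1+b}$:
\[
f(t)\le f(t_*)+\int_{t_*}^{t} h\le f(t_*)+\beta .
\]
If $t\le t_*$, use periodicity to shift: apply the hypothesis on $[t_*, t+T]$, which has length at most $T$. This gives $f(t)=f(t+T)\le f(t_*)+\int_{t_*}^{t+T}h$. Since $h\ge 0$ and is $T$-periodic, the last integral is at most $\int_0^{2T}h\le 2\beta$.

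Either way,
\[
\sup_{t\in(0,T)} f(t)\le \left(\frac{\beta}{aT}\right)^{\frac{1}{1+b}}+\beta ,
\]
and adding Step 1's bound of $\beta$ for $a\int_0^T f^{1+b}$ yields the claimed right-hand side (at worst with $3\beta$, see below).

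\emph{Main point to handle.} The only delicate step is the bookkeeping of $\int h$ over a window that may straddle two periods. Since $t+T-t_*\le T$, the window $[t_*,t+T]$ has length at most one period. A length-$T$ window of a nonnegative $T$-periodic function integrates to exactly $\int_0^T h\le\beta$, so the bound is $\beta$ rather than $2\beta$. The estimate therefore closes with total constant $2\beta$, as stated.
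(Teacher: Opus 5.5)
Your proof is correct. It is the standard argument for this lemma, in four steps:
\begin{itemize}
\item Integrating over one period removes $f(T)-f(0)$ and gives $a\int_0^T f^{1+b}\le\beta$.
\item The mean value theorem for integrals yields $t_*\in[0,T]$ with $f(t_*)\le(\beta/(aT))^{1/(1+b)}$.
\item Propagating from $t_*$ over a window of length at most $T$ gives $\sup_{(0,T)}f\le(\beta/(aT))^{1/(1+b)}+\beta$. Here you use $f(t)=f(t+T)$ when $t\le t_*$, and note $t_*<t+T$ because $t>0$.
\item Adding the two bounds produces the stated $2\beta$.
\end{itemize}

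The paper gives no proof of its own. It only quotes the lemma from the literature, so your argument supplies exactly what is omitted there.

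The one thing to clean up is Step 3, which is internally inconsistent as written. There you bound $\int_{t_*}^{t+T}h\le\int_0^{2T}h\le 2\beta$, which would only give $f(t)\le f(t_*)+2\beta$ and a total of $3\beta$. Yet you immediately assert ``either way, $\sup f\le(\beta/(aT))^{1/(1+b)}+\beta$,'' which does not follow from that line. Replace the $2T$ bound directly in Step 3 by
\[
\int_{t_*}^{t+T}h\le\int_{t_*}^{t_*+T}h=\int_0^Th\le\beta,
\]
valid because $h\ge0$, $h$ is $T$-periodic and $t+T\le t_*+T$. Then the final ``Main point'' paragraph is no longer needed as a correction.
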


\subsection{Existence}
In this subsection, we shall prove the existence of non-constant positive time-periodic solutions to \eqref{system} in time-periodic environments in any dimension (i.e., $n\geq 1$) by using topological degree theory. 
To this end, we first consider the existence of nontrivial time-periodic solutions to the following approximating problem
\begin{equation}\label{periodic2}
\begin{cases}
 U_t=\Delta  U-\chi_1\nabla \cdot(  U\nabla W)+ \varepsilon  U^{n+1}(1- U), &x\in\Omega, \ t>0,\\
 V_t=\Delta  V-\chi_2\nabla \cdot(V\nabla  U)+\varepsilon  V^{n+1}(1- V),&x\in\Omega, \ t>0,\\
 W_t=d\Delta  W-\lambda( U+ V) W-\mu  W+r(x,t),&x\in\Omega, \ t>0,\\
\nabla U\cdot\nu=\nabla V \cdot\nu=\nabla W\cdot \nu=0, &x\in\partial\Omega, \ t>0, \\
 (U,V,W)(x,t)= (U,V,W)(x,t+T), &x\in\Omega, t\geq 0,
\end{cases}     
\end{equation}
where the constant $\varepsilon\in (0,1)$. 

 In fact, for any given nonnegative $T$-periodic functions $(\widetilde u, \widetilde v) \in [C^{\alpha,\frac{\alpha}{2}}(\overline{Q}_T)]^2=:\mathcal{X}$, we can use the result in \cite[Theorem 1.1]{Amann} to show  that the following linear periodic system
\begin{equation}\label{lw}
\begin{cases}
 W_t=d\Delta  W-\lambda(\widetilde u_++\widetilde v_+) W-\mu  W+r(x,t),&x\in\Omega, \ t>0,\\
\nabla W\cdot \nu=0, &x\in\partial\Omega, \ t>0, \\
 W(x,t)= W(x,t+T), &x\in\Omega,t\geq 0,
\end{cases}      
\end{equation}
admits a unique solution $ W\in C^{2+\alpha,\frac{2+\alpha}{2}}(\overline{Q}_T)$. Here, $\phi_+:=\max\{\phi,0\}$. Moreover, by maximum principle, we have
\begin{equation}\label{wbd}
0\leq  W\leq \frac{r_*}{\mu}.    
\end{equation}
For such given $(\widetilde u , \widetilde v)\in \mathcal{X}$ and obtained $W\in C^{2+\alpha,\frac{2+\alpha}{2}}(\overline{Q}_T)$,  we consider the following linearized system
\begin{equation}\label{lu}
\begin{cases}
 U_t-\Delta  U+\chi_1\nabla \cdot(  U\nabla W)+A_1 U=A_1\sigma\widetilde u_++\sigma \varepsilon \widetilde u_+^{n+1}(1- U), &x\in\Omega, \ t>0,\\
\nabla{U}\cdot \nu=0, &x\in\partial\Omega, \ t>0, \\
 U(x,t)= U(x,t+T), &x\in\Omega, t\geq 0,
\end{cases}        
\end{equation}
where constants $\sigma \in [0,1]$ and $A_1 > \|\chi_1 \Delta  W\|_{L^\infty} + 1$. Then using the result  in \cite[Theorem 1.1]{Amann} along with maximum principle implies that \eqref{lu} admits a unique $T$-periodic solution $U$ satisfying
\begin{equation}\label{UCP}
0\leq U \in C^{2+\alpha,1+\frac{\alpha}{2}}(\overline{Q}_T).
\end{equation}
Similarly, for above given $(\tilde u , \tilde v)\in \mathcal{X}$ and obtained $U\in C^{2+\alpha,\frac{2+\alpha}{2}}(\overline{Q}_T)$, we can choose $A_2 > \|\chi_2 \Delta  U\|_{L^\infty} + 1$ such that the following system 
\begin{equation}\label{lv}
\begin{cases}
 V_t-\Delta  V+\chi_2\nabla \cdot(V\nabla  U)+A_2 V=A_2\sigma\widetilde v_++\sigma\varepsilon \widetilde v_+^{n+1}(1- V),&x\in\Omega, \ t>0,\\
\nabla{V}\cdot \nu=0, &x\in\partial\Omega, \ t>0, \\
 V(x,t)= V(x,t+T), &x\in\Omega, t\geq 0,
\end{cases}        
\end{equation}
admits a unique solution $V$ satisfying
\begin{equation}\label{VCP}
0\leq V \in C^{2+\alpha,1+\frac{\alpha}{2}}(\overline{Q}_T).
\end{equation}
Therefore, we can define a map $\mathcal{F}:  \mathcal{X}\times [0,1] \to \mathcal{X}$ by
\begin{equation}\label{FO}
    \mathcal{F}((\widetilde u, \widetilde v),\sigma)=( U,  V).
\end{equation}
Then to establish the existence of non-constant positive $T$-periodic solutions to \eqref{periodic2}, it suffices to show that there exists non-constant $T$-periodic solution $U,V>0$ such that $\mathcal{F}(( U, V),1)=( U, V).$

First, we establish the properties of the operator $\mathcal{F}$ defined in \eqref{FO}. 
\begin{lemma}\label{C12}
 The operator $\mathcal{F}$ defined in \eqref{FO} is compact and continuous. Moreover, it holds that $\mathcal{F}( (\widetilde{U}, \widetilde{V}), 0 ) = (0, 0)$ for any $(\widetilde{U}, \widetilde{V}) \in \mathcal{X}$.
\end{lemma}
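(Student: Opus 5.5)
We prove the three assertions of Lemma \ref{C12} in the order: the map $\sigma=0$ gives zero, then continuity, then compactness. The last two both rest on Schauder estimates for the three linear periodic problems \eqref{lw}, \eqref{lu}, \eqref{lv} solved in sequence. As a preliminary, we note that $\mathcal{F}$ is well defined. The constants $A_1,A_2$ depend on $\|\Delta W\|_{L^\infty}$ and $\|\Delta U\|_{L^\infty}$. To make $\mathcal{F}$ a genuine map, we fix $A_1$ and $A_2$ locally uniformly on bounded sets of $\mathcal{X}$, using the a priori bounds below; equivalently, we note that the solution of \eqref{lu} does not depend on the admissible choice of $A_1$ once it is unique. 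Alternatively, one simply regards $A_1,A_2$ as fixed large constants on each ball of $\mathcal{X}$, which is all that the degree argument needs.

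\emph{The case $\sigma=0$.} When $\sigma=0$, the right-hand side of \eqref{lu} vanishes, so \eqref{lu} becomes the homogeneous linear periodic problem $U_t-\Delta U+\chi_1\nabla\cdot(U\nabla W)+A_1U=0$. Uniqueness of $T$-periodic solutions (the Amann result with $A_1$ large) shows that $U\equiv0$. Then \eqref{lv} with $\sigma=0$ likewise gives $V\equiv0$, so $\mathcal{F}((\widetilde U,\widetilde V),0)=(0,0)$.

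\emph{Compactness.} Let $(\widetilde u,\widetilde v)$ range over a bounded set $B\subset\mathcal{X}$ and let $\sigma\in[0,1]$.
\begin{itemize}
\item[(i)] Since $\widetilde u_+,\widetilde v_+$ are bounded in $C^{\alpha,\frac{\alpha}{2}}$, the coefficient of \eqref{lw} is bounded there. Schauder theory for periodic parabolic problems (Lemma \ref{LT}, applied with the zero-order term moved to the right-hand side and combined with \eqref{wbd}) gives a uniform bound on $\|W\|_{C^{2+\alpha,1+\frac{\alpha}{2}}(\overline Q_T)}$.
\item[(ii)] This bound makes $\nabla W$ and $\Delta W$ uniformly bounded in $C^{\alpha,\frac{\alpha}{2}}$, so $A_1$ can be fixed uniformly on $B$. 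We write \eqref{lu} in nondivergence form, $U_t-\Delta U+\chi_1\nabla W\cdot\nabla U+(\chi_1\Delta W+A_1+\sigma\varepsilon\widetilde u_+^{n+1})U=A_1\sigma\widetilde u_++\sigma\varepsilon\widetilde u_+^{n+1}$. The maximum principle gives an $L^\infty$ bound on $U$. Then $W^{2,1}_p$ estimates for large $p$ bound $U$ in $C^{1+\beta,\frac{1+\beta}{2}}$, which bootstraps to a uniform $C^{2+\alpha,1+\frac{\alpha}{2}}$ bound.
\item[(iii)] The same argument applied to \eqref{lv} bounds $V$ in $C^{2+\alpha,1+\frac{\alpha}{2}}$.
\end{itemize}
The embedding $C^{2+\alpha,1+\frac{\alpha}{2}}(\overline Q_T)\hookrightarrow C^{\alpha,\frac{\alpha}{2}}(\overline Q_T)$ is compact by Arzel\`a--Ascoli, so $\mathcal{F}(B\times[0,1])$ is precompact in $\mathcal{X}$.

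\emph{Continuity.} Let $(\widetilde u_k,\widetilde v_k,\sigma_k)\to(\widetilde u,\widetilde v,\sigma)$. By the bounds above, the corresponding $(W_k,U_k,V_k)$ are precompact in $C^{2,1}$. Passing to the limit in the equations, any limit point solves the limiting linear periodic problems. These limiting problems have unique solutions, so every subsequence has the same limit, and hence the whole sequence converges in $\mathcal{X}$.

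The main obstacle is the uniform a priori estimate in step (ii) for the periodic problem with the taxis coefficient $\nabla W$. Lemma \ref{LT} only handles constant coefficients, so we absorb the lower-order terms into the right-hand side and interpolate. The dependence of $A_1$ and $A_2$ on $W$ and $U$ is handled by the uniform bounds established in steps (i) and (ii).
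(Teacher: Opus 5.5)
Your proof is correct. For compactness and continuity it follows the paper's route, with more detail. The paper only cites the compact embedding $C^{2+\alpha,1+\frac{\alpha}{2}}(\overline{Q}_T)\hookrightarrow C^{\alpha,\frac{\alpha}{2}}(\overline{Q}_T)$, tacitly relying on the same chain of $L^p$/Schauder bounds for \eqref{lw}, \eqref{lu}, \eqref{lv} on bounded sets that you write out. It calls continuity ``easy to check'', and your subsequence-plus-uniqueness argument is the standard way to justify that.

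The step at $\sigma=0$ is done differently. The paper integrates \eqref{lu} and \eqref{lv} over $Q_T$. Periodicity and the Neumann conditions remove the time-derivative, diffusion and taxis terms, leaving $A_1\int_0^T\int_\Omega U=A_2\int_0^T\int_\Omega V=0$, and it concludes from $U,V\ge 0$. You instead observe that $0$ solves the homogeneous periodic problems and invoke uniqueness. Your version needs neither the divergence structure nor the sign information \eqref{UCP}, \eqref{VCP}. The paper's needs those but not uniqueness. Both facts come from the same Amann/maximum-principle input, so neither route is really cheaper.

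One sentence in your preliminary paragraph is false and should be deleted: the solution of \eqref{lu} does depend on $A_1$. Suppose two admissible values $A_1\neq A_1'$ produced the same $U$. Subtracting the two equations gives $(A_1-A_1')(U-\sigma\widetilde u_+)=0$, i.e.\ $U=\sigma\widetilde u_+$. This fails in general, since for $\sigma>0$ the function $\sigma\widetilde u_+$ need not even be $C^{2,1}$. Independence holds only at such special points, e.g.\ at $\sigma=0$ or at fixed points with $\sigma=1$.

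Consequently, an arbitrary admissible choice of $A_1,A_2$ for each $(\widetilde u,\widetilde v)$ would leave $\mathcal{F}$ ill defined and possibly discontinuous. Your other option is the right one, and your continuity argument in fact requires it, because the limiting equation must carry the constants assigned to the limit point. Either fix $A_1,A_2$ on the relevant closed ball, or choose them continuously, e.g.\ $A_1:=\chi_1\|\Delta W\|_{L^\infty}+2$ and $A_2:=\chi_2\|\Delta U\|_{L^\infty}+2$. These depend continuously on $(\widetilde u,\widetilde v)$ by the Schauder estimate applied to differences of solutions of \eqref{lw} and \eqref{lu}.
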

\begin{proof}
The compactness of the operator  $\mathcal{F}$ defined in \eqref{FO} directly follows from Sobolev embedding theorem \( C^{2+\alpha,1+\frac{\alpha}{2}}(\overline{Q}_T) \hookrightarrow C^{\alpha,\frac{\alpha}{2}}(\overline{Q}_T) \). And it is easy to check that $\mathcal{F}$  is continuous. Next, for \(\sigma = 0\), integrating $U$-equation and $V$-equation in \eqref{lu} and \eqref{lv} over \(\Omega \times (0,T)\), respectively, yields
\begin{equation*}
\int_0^T\int_{\Omega} U= \int_0^T\int_{\Omega} V  =0 ,
\end{equation*}
which along with the facts $U,V\geq0$ implies \(\mathcal{F}( (\widetilde{U}, \widetilde{V}), 0 ) = (0, 0)\) for any $(\widetilde{U}, \widetilde{V}) \in \mathcal{X}$. 
\end{proof}

We now establish {\it a priori} bound for the solution of the fixed point equation 
$$\mathcal{F}((U,V),\sigma)=(U,V), \ \sigma\in(0,1],$$
which is equivalent to the following system
\begin{equation}\label{periodic3}
\begin{cases}
 U_t=\Delta  U-\chi_1\nabla \cdot(  U\nabla W)-A_1(1-\sigma) U+ \sigma\varepsilon  U^{n+1}(1- U), &x\in\Omega, \ t>0,\\
 V_t=\Delta  V-\chi_2\nabla \cdot(V\nabla  U)-A_2(1-\sigma) V+\sigma\varepsilon  V^{n+1}(1- V),&x\in\Omega, \ t>0,\\
 W_t=d\Delta  W-\lambda( U+ V) W-\mu  W+r(x,t),&x\in\Omega, \ t>0,\\
\nabla U\cdot \nu=\nabla V\cdot \nu =\nabla W\cdot \nu=0, &x\in\partial\Omega, \ t>0, \\
 (U,V,W)(x,t)=(U,V,W)(x,t+T), &x\in\Omega, t\geq0.
\end{cases}     
\end{equation}
For clarity, we also emphasize that constants $c_i$ and $C_i~(i=1,2,\cdots)$ generated in this subsection are  independent of $\sigma,r_*$ and $\varepsilon$.

\begin{lemma}\label{l2p} Let $\operatorname{(H)}$ hold, and assume that $(U,V,W)$ is a $T$-periodic solution of \eqref{periodic3}. Then there exist constants $C_i>0~(i=1,2,3)$ independent of \(\sigma\), $r_*$ and $\varepsilon$ such that
\begin{align}
  \label{uv2p}
  &\sup_{t \in (0,T)} \int_{\Omega} U(x,t) + \sup_{t \in (0,T)} \int_{\Omega} V(x,t) + \int_0^T\int_{\Omega} U^{n+2} + \int_0^T\int_{\Omega} V^{n+2} \leq C_1,
    \end{align}
    and 
    \begin{align}\label{dwp}
  \int_0^T\|W\|_{W^{2,n+2}}^{n+2} \leq C_2r_*^{n+2},
    \end{align}
    as well as
  \begin{align} 
  \label{nw2p+2}
  \int_0^T \|\nabla W\|_{L^{2n+4}}^{2n+4} \leq C_3r_*^{2n+4}.
  \end{align}
\end{lemma}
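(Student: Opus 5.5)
The plan is to obtain \eqref{uv2p} from integral identities for the $U$- and $V$-equations, which use only the sign of the logistic perturbation. Then \eqref{dwp} follows from the maximal regularity estimate of Lemma \ref{LT} applied to the $W$-equation, and \eqref{nw2p+2} from a Gagliardo--Nirenberg interpolation between $W^{2,n+2}$ and $L^\infty$. Throughout I use $U,V\ge 0$ (from \eqref{UCP}, \eqref{VCP}) and $0\le W\le r_*/\mu$ from \eqref{wbd}.

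\emph{Step 1: bounds for $U$ and $V$.} Integrating the $U$-equation of \eqref{periodic3} over $\Omega$, the Neumann conditions kill the diffusion and taxis terms. Hence
\[
\frac{d}{dt}\int_\Omega U + A_1(1-\sigma)\int_\Omega U=\sigma\varepsilon\int_\Omega U^{n+1}-\sigma\varepsilon\int_\Omega U^{n+2}.
\]
Integrating over $(0,T)$ and using periodicity, the time derivative disappears. The $A_1$-term is nonnegative, so
\[
\sigma\varepsilon\int_0^T\!\!\int_\Omega U^{n+2}\le \sigma\varepsilon\int_0^T\!\!\int_\Omega U^{n+1}.
\]
Since $\sigma\varepsilon>0$, we may divide by it; this is exactly why the constants do not depend on $\sigma$ or $\varepsilon$. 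Hölder's inequality then gives $\int_0^T\!\int_\Omega U^{n+1}\le (\int_0^T\!\int_\Omega U^{n+2})^{\frac{n+1}{n+2}}|Q_T|^{\frac1{n+2}}$, and therefore $\int_0^T\!\int_\Omega U^{n+2}\le |\Omega|T$ and $\int_0^T\!\int_\Omega U^{n+1}\le|\Omega|T$.

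For the supremum, I integrate the identity over $(t_0,t)$ and drop the nonpositive terms, which gives
\[
\int_\Omega U(t)\le \int_\Omega U(t_0)+\int_0^T\!\!\int_\Omega U^{n+1}.
\]
I choose $t_0$ where $\int_\Omega U(t_0)$ is minimal over a period. This minimum is at most $\frac1T\int_0^T\!\int_\Omega U\le C$, again by Hölder and the $L^{n+2}$ bound. Alternatively, one can apply Lemma \ref{BDSS} with $f=\int_\Omega U$, $b=0$ and $h=\sigma\varepsilon\int_\Omega U^{n+1}$, although the $A_1$ term degenerates as $\sigma\to1$, which is why I prefer the direct argument. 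The $V$-equation is handled identically, since its taxis term is also in divergence form. This proves \eqref{uv2p}.

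\emph{Step 2: $W^{2,1}_{n+2}$ bound.} I view $W$ as the $T$-periodic solution of $W_t-d\Delta W+\mu W=f:=r-\lambda(U+V)W$. By \eqref{wbd} and Step 1,
\[
\|f\|_{L^{n+2}(Q_T)}\le r_*|Q_T|^{\frac1{n+2}}+\frac{\lambda r_*}{\mu}\|U+V\|_{L^{n+2}(Q_T)}\le Cr_*.
\]
Lemma \ref{LT} with $p=n+2$ and uniqueness of the periodic solution then give $\|W\|_{W^{2,1}_{n+2}(Q_T)}\le Cr_*$. Since $\int_0^T\|W\|_{W^{2,n+2}}^{n+2}\le\|W\|_{W^{2,1}_{n+2}(Q_T)}^{n+2}$, this yields \eqref{dwp}. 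These constants may depend on $d,\mu,\lambda,\Omega,T$, but not on $\sigma,\varepsilon,r_*$.

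\emph{Step 3: gradient bound.} The Gagliardo--Nirenberg inequality gives $\|\nabla W\|_{L^{2p}}\le C\|W\|_{W^{2,p}}^{1/2}\|W\|_{L^\infty}^{1/2}$ with $p=n+2$. Hence
\[
\|\nabla W\|_{L^{2n+4}}^{2n+4}\le C\Big(\frac{r_*}{\mu}\Big)^{n+2}\|W\|_{W^{2,n+2}}^{n+2}.
\]
Integrating in time and using \eqref{dwp} gives \eqref{nw2p+2}.

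The main obstacle I anticipate is Step 1: the bound must be uniform in $\sigma$ and $\varepsilon$, while $\varepsilon$ multiplies the only dissipative term. The key point is that every term controlling $U$ in the integrated identity carries the same factor $\sigma\varepsilon$, so it cancels. One must only make sure not to rely on the $A_1(1-\sigma)$ damping, which vanishes at $\sigma=1$.
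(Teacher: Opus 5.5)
Your proposal is correct and follows the paper's proof essentially step for step. You integrate the $U$- and $V$-equations over $Q_T$ to bound $\int_0^T\!\int_\Omega U^{n+2}$ and $\int_0^T\!\int_\Omega V^{n+2}$ uniformly in $\sigma,\varepsilon$, apply the maximal-regularity estimate of Lemma \ref{LT} to the $W$-equation, and then interpolate with Gagliardo--Nirenberg against $\|W\|_{L^\infty}\le r_*/\mu$.

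The only variation is in the bound for $\sup_t\int_\Omega U$: you use a minimum-over-a-period argument instead of Lemma \ref{BDSS}. The degeneracy of $A_1(1-\sigma)$ that you worried about is not actually an obstacle, since the paper simply adds $\int_\Omega U$ to both sides so that Lemma \ref{BDSS} applies with $a=1$.
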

\begin{proof}
The combination of \eqref{wbd}, \eqref{UCP} with \eqref{VCP} directly implies 
\begin{equation}\label{UVW}
   0\leq U, \quad  0\leq V, \quad 0(\not\equiv)\leq  W \leq \frac{r_*}{\mu}.
\end{equation}
Next, we show \eqref{uv2p}. Integrating the first equation in \eqref{periodic3} over $\Omega\times[0,T]$, then using the $T$-periodicity of $U$ and Young's inequality, we obtain
\begin{equation*}
    A_1(1-\sigma)\int_0^T\int_\Omega U+\sigma\varepsilon\int_0^T\int_\Omega  U^{n+2}=\sigma\varepsilon\int_0^T\int_\Omega  U^{n+1}\leq\frac{\sigma\varepsilon}{2}\int_0^T\int_\Omega  U^{n+2} + c_1\sigma\varepsilon,
\end{equation*}
which implies
\begin{equation}\label{l2p1}
\int_0^T\int_\Omega  U^{n+2}\leq  2c_1 .
\end{equation}
On the other hand, integrating the first equation in \eqref{periodic3} over $\Omega$, and adding $\int_\Omega U$ to both sides of the results yield
\begin{equation}\label{0629}
\frac{d}{dt}\int_\Omega U+\int_\Omega U+A_1(1-\sigma)\int_\Omega U+\sigma\varepsilon\int_\Omega U^{n+2}
=\sigma\varepsilon\int_\Omega U^{n+2}+\int_\Omega U\le c_2\int_\Omega U^{n+2}+c_2,
\end{equation}
where the last inequality follows from
$\sigma\in(0,1]$, $\varepsilon\in(0,1)$ and  Young's inequality. Therefore, for any $0\leq t_0<t\leq T$, integrating \eqref{0629} over $(t_0,t)$ and using \eqref{l2p1}, we have
\begin{equation}\label{l2p1-1}
\int_\Omega  U(\cdot,t)- \int_\Omega U(\cdot,t_0)+\int_{t_0}^t\int_\Omega  U(\cdot,s)ds\leq c_2\int_{t_0}^t\bigg(\int_\Omega U^{n+2}+1\bigg)ds.
\end{equation}
Noting \eqref{l2p1} and applying Lemma \ref{BDSS} to \eqref{l2p1-1} give
\begin{equation}\label{l2p2}
\sup_{t\in(0,T)}\int_\Omega U(x,t)\le c_3.
\end{equation}
Similarly, we deduce from the second equation of \eqref{periodic3} that
\begin{equation}\label{l2p2-1}
 \sup_{t \in (0,T)} \int_{\Omega}  V(x,t)+\int_0^T\int_\Omega  V^{n+2}\leq  c_4.
\end{equation}
Then the combination of \eqref{l2p1}, \eqref{l2p2}  and  \eqref{l2p2-1} gives \eqref{uv2p}. Finally, applying Lemma \ref{LT} to the third equation of \eqref{periodic3}, we obtain
\begin{equation*}
\begin{split}
\int_0^T\| W\|_{W^{2,n+2}}^{n+2}&\leq c_5 \int_0^T\|\lambda( U+ V)W+r(x,t) \|_{L^{n+2}}^{n+2}\\
&\leq c_5r_*^{n+2}\bigl(2\lambda^{n+2} c_1+\lambda^{n+2} c_4+|\Omega|T\bigr)\leq c_6r_*^{n+2},
\end{split}
\end{equation*}
which gives \eqref{dwp}. Using Gagliardo-Nirenberg inequality and \eqref{wbd}, one obtains
\begin{equation}\label{l2p2-3}
\begin{split}
\|\nabla  W\|_{L^{2n+4}}^{2n+4} 
\leq c_7 \| W\|_{L^\infty}^{n+2} \|D^2 W\|_{L^{n+2}}^{n+2} + c_8 \| W\|_{L^\infty}^{2n+4} 
\leq c_{9}r_*^{n+2} \left( \|D^2  W\|_{L^{n+2}}^{n+2} + r_*^{n+2} \right).  
\end{split}
\end{equation}
Integrating \eqref{l2p2-3} over $(0,T)$ and utilizing \eqref{dwp}, we derive
\begin{equation*}
    \int_0^T \|\nabla  W\|_{L^{2n+4}}^{2n+4} dt \leq c_9 r_*^{n+2} \left( \int_0^T \|D^2  W\|_{L^{n+2}}^{n+2} dt + Tr_*^{n+2} \right) \leq c_{10}r_*^{2n+4}.
\end{equation*}
This directly gives \eqref{nw2p+2}.
\end{proof}

\begin{lemma}\label{nw2p}
Let conditions in Lemma \ref{l2p} hold. Then there exists a constant $C_4>0$ independent of \(\sigma\), $r_*$ and $\varepsilon$ such that
\begin{equation}\label{nw2p-0}
\sup_{t \in (0,T)}\int_{\Omega} |\nabla  W|^{2n+2}\leq C_4(r_*^{2n+6}+1).
\end{equation}
\end{lemma}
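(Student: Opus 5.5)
We will derive a differential inequality for $y(t):=\int_\Omega|\nabla W|^{2n+2}$ and close it with Lemma \ref{BDSS} (with $b=0$), taking as forcing the space-time integrals already controlled in Lemma \ref{l2p}. Write $p:=n+1$. Differentiating $W$-equation of \eqref{periodic3} and testing against $|\nabla W|^{2p-2}\nabla W$ uses the pointwise identity $\nabla W\cdot\nabla\Delta W=\frac12\Delta|\nabla W|^2-|D^2W|^2$. This gives
\begin{align*}
\frac{1}{2p}\frac{d}{dt}\int_\Omega|\nabla W|^{2p}
&+ d\int_\Omega|\nabla W|^{2p-2}|D^2W|^2+\frac{d(p-1)}{2}\int_\Omega|\nabla W|^{2p-4}\big|\nabla|\nabla W|^2\big|^2+\mu\int_\Omega|\nabla W|^{2p}\\
&\le \frac d2\int_{\partial\Omega}|\nabla W|^{2p-2}\frac{\partial|\nabla W|^2}{\partial\nu}
-\int_\Omega|\nabla W|^{2p-2}\nabla W\cdot\nabla\big(\lambda(U+V)W-r\big)\\
&\quad -\lambda\int_\Omega (U+V)|\nabla W|^{2p}.
\end{align*}
The last term is nonpositive because $U,V\ge0$ by \eqref{UVW}, so we drop it.

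\emph{Boundary term.} Use $\partial_\nu|\nabla W|^2\le c|\nabla W|^2$ on $\partial\Omega$. Then apply the trace and Gagliardo--Nirenberg inequalities to $|\nabla W|^{p}$. This bounds the boundary term by $\frac{d(p-1)}{4}\int_\Omega|\nabla W|^{2p-4}|\nabla|\nabla W|^2|^2+c\big(\int_\Omega|\nabla W|^2\big)^{p}$, and the last factor is in turn controlled by $r_*$ via \eqref{wbd} and $\int|\nabla W|^2\le\|W\|_\infty\int|\Delta W|$.

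\emph{Interior term.} Since $r$ is only Hölder continuous, we must not differentiate it. We integrate by parts to move the gradient onto $|\nabla W|^{2p-2}\nabla W$. This produces
\[
\int_\Omega\big(\lambda(U+V)W-r\big)\Big(|\nabla W|^{2p-2}\Delta W+(p-1)|\nabla W|^{2p-4}\nabla W\cdot\nabla|\nabla W|^2\Big),
\]
with no boundary contribution by the Neumann condition. Young's inequality absorbs half of each dissipation term, which leaves
\[
c\int_\Omega\big(\lambda(U+V)W+r\big)^2|\nabla W|^{2p-2}\le c\,r_*^2\int_\Omega\big((U+V)^2+1\big)|\nabla W|^{2n}.
\]
Young's inequality with exponents $\frac{n+2}{2}$ and $\frac{n+2}{n}$ bounds this by
\[
c\,r_*^2\Big(\int_\Omega(U+V)^{n+2}+1\Big)+c\,r_*^2\int_\Omega|\nabla W|^{2n+4}\cdot\frac{1}{\,\cdot\,}\text{-free terms}.
\]
More precisely, $|\nabla W|^{2n}\cdot (U+V)^2\le (U+V)^{n+2}+|\nabla W|^{2n+4}$ after adjusting the exponents. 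So the right-hand side is bounded by
\[
h(t):=c\,r_*^2\Big(\int_\Omega U^{n+2}+\int_\Omega V^{n+2}+1\Big)+c\,r_*^2\int_\Omega|\nabla W|^{2n+4}.
\]

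\emph{Conclusion.} Integrating over $(t_0,t)$ gives $y(t)-y(t_0)+2p\mu\int_{t_0}^t y\le\int_{t_0}^t h$. By \eqref{uv2p} and \eqref{nw2p+2}, $\int_0^T h\le c\,r_*^2(C_1+T)+c\,C_3r_*^{2n+6}\le c(r_*^{2n+6}+1)$; the middle powers of $r_*$ are absorbed by Young's inequality. Lemma \ref{BDSS} with $a=2p\mu$, $b=0$ and $\beta=c(r_*^{2n+6}+1)$ then yields \eqref{nw2p-0}. The constants do not depend on $\sigma$ or $\varepsilon$, because these parameters do not appear in the $W$-equation.

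\emph{Main obstacle.} We expect the boundary term to be the delicate step. It needs the convexity-free estimate $\partial_\nu|\nabla W|^2\le c|\nabla W|^2$ together with an interpolation that keeps the lower-order remainder expressible in terms of $r_*$. If the interpolation produces the remainder $c\|\nabla W\|_{L^2}^{2p}$, we will bound it by \eqref{wbd} and the $W^{2,n+2}$-estimate \eqref{dwp}. That choice may introduce a different power of $r_*$, which would again be absorbed into $r_*^{2n+6}+1$.
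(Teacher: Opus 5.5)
Your proposal is correct and follows essentially the same route as the paper. Both test the differentiated $W$-equation with $|\nabla W|^{2n}\nabla W$, absorb the boundary integral and the integrated-by-parts reaction and source terms into the two dissipation integrals, bound what remains by a multiple of $(r_*^2+1)\int_\Omega(1+U^{n+2}+V^{n+2}+|\nabla W|^{2n+4})$, and close with Lemma \ref{BDSS} via \eqref{uv2p} and \eqref{nw2p+2}. The differences are cosmetic: the paper handles the boundary integral with a known boundary lemma plus $\int_\Omega|\nabla W|^{2n+2}\le\int_\Omega|\nabla W|^{2n+4}+c$, and it splits the reaction term before integrating by parts. Two small fixes: your extra term $-\lambda\int_\Omega(U+V)|\nabla W|^{2p}$ is double-counted (harmless, since you drop it), and the boundary remainder must be included in $h$, where after time integration with \eqref{dwp} it contributes only $O(r_*^{2n+2})$.
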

\begin{proof}Using the third equation of  \eqref{periodic3} and \eqref{UVW}, we have
\begin{align}\notag
&\frac{1}{2n+2} \frac{d}{d t} \int_{\Omega}|\nabla  W|^{2n+2} + \mu \int_{\Omega}|\nabla  W|^{2n+2}\\ \notag
&=  \int_{\Omega}|\nabla  W|^{2n} \nabla  W \cdot \nabla  W_t + \mu \int_{\Omega}|\nabla  W|^{2n+2} \\\notag
\label{nw2p-1}
&=  \int_{\Omega}|\nabla  W|^{2n} \nabla  W \cdot \nabla[d\Delta  W-\lambda( U+ V)  W-\mu  W+r(x,t)] + \mu \int_{\Omega}|\nabla  W|^{2n+2} \\
& \leq d\int_{\Omega}|\nabla  W|^{2n} \nabla  W \cdot \nabla \Delta  W-\lambda \int_{\Omega}  W|\nabla  W|^{2n} \nabla  U \cdot \nabla  W \\ \notag
&\quad -\lambda \int_{\Omega}  W|\nabla  W|^{2n} \nabla  V \cdot \nabla  W + \int_{\Omega}|\nabla  W|^{2n} \nabla  W \cdot \nabla r(x,t) \\\notag
&=:  I_1+I_2+I_3+I_4.\notag
\end{align}
Using the identity $\nabla  W \cdot \nabla \Delta  W = \frac{1}{2}\Delta |\nabla  W|^2 - |D^2  W|^2$ and integrating the results by parts,  one has
\begin{equation}\label{nw2p-2*}
\begin{split}
I_1
&=\frac{d}{2}\int_\Omega |\nabla W|^{2n}\Delta |\nabla W|^2-d\int_\Omega |\nabla W|^{2n}|D^2W|^2\\
&=\frac{d}{2} \int_{\partial \Omega}|\nabla  W|^{2n} \frac{\partial|\nabla  W|^2}{\partial\nu} - \frac{dn}{2} \int_{\Omega}|\nabla  W|^{2n-2}\bigl|\nabla| \nabla  W|^2\bigr|^2  -d\int_{\Omega}|\nabla  W|^{2n}\left|D^2  W\right|^2.\\
\end{split}
\end{equation}
By \cite[Lemma 2.6]{Wang-2019} and using Young's inequality, there exists $c_1 > 0$ such that
\begin{equation}\label{wang}
\begin{split}
\int_{\partial \Omega}|\nabla  W|^{2n} \frac{\partial|\nabla  W|^2}{\partial\nu} \, dS
&\leq \frac{n}{2} \int_{\Omega}|\nabla  W|^{2n-2}\bigl|\nabla| \nabla  W|^2\bigr|^2+c_1 \int_{\Omega}|\nabla  W|^{2n+2}\\
&\leq \frac{n}{2} \int_{\Omega}|\nabla  W|^{2n-2}\bigl|\nabla| \nabla  W|^2\bigr|^2+c_2 \int_{\Omega}|\nabla  W|^{2n+4}+c_3.
\end{split}
\end{equation}
Then substituting \eqref{wang} into \eqref{nw2p-2*}, we derive 
\begin{equation}\label{nw2p-2}
\begin{aligned}
I_1 & \leq -d\int_{\Omega}|\nabla  W|^{2n}\left|D^2  W\right|^2-\frac{dn}{4} \int_{\Omega}|\nabla  W|^{2n-2}\bigl|\nabla| \nabla  W|^2\bigr|^2 + \frac{dc_2}{2} \int_{\Omega}|\nabla  W|^{2n+4} + \frac{dc_3}{2}.
\end{aligned}
\end{equation}
We integrate the term $I_2$ by parts, and use \eqref{UVW} along with the fact $|\Delta  W| \leq \sqrt{n}|D^2  W|$ to get
\begin{equation}\label{nw2p-3a}
\begin{aligned}
I_2 
&= \lambda \int_{\Omega}  U \nabla \cdot ( W|\nabla  W|^{2n} \nabla  W) \\
&=\lambda \int_{\Omega}  U |\nabla  W|^{2n+2}+\lambda \int_{\Omega}  UW |\nabla  W|^{2n}\Delta W+\lambda n\int_{\Omega}  UW |\nabla  W|^{2n-2}\nabla W\cdot \nabla|\nabla  W|^2\\
&\leq  c_4 r_*\int_{\Omega}  U |\nabla  W|^{2n} |D^2  W| + \lambda \int_{\Omega}  U |\nabla  W|^{2n+2}+c_4r_* \int_{\Omega}  U |\nabla  W|^{2n-1}| \nabla|\nabla  W|^2|.\\
\end{aligned}
\end{equation}
Then applying Young's inequality, one has 
\begin{equation}\label{nw2p-3b}
\begin{aligned}
c_4 r_* \int_{\Omega}  U |\nabla  W|^{2n} |D^2  W| 
& \leq \frac{d}{4} \int_{\Omega} |\nabla  W|^{2n} |D^2  W|^2 + \frac{c_4^2 r_*^2}{d} \int_{\Omega}  U^2 |\nabla  W|^{2n}\\
&\leq \frac{d}{4} \int_{\Omega} |\nabla  W|^{2n} |D^2  W|^2+c_5r_*^2 \int_\Omega U^{n+2}+c_5r_*^2\int_\Omega |\nabla W|^{2n+4},\\
\end{aligned}
\end{equation}
and 
\begin{equation}\label{nw2p-3b*}
\begin{aligned}
&\lambda \int_{\Omega}  U |\nabla  W|^{2n+2}+c_4 r_* \int_{\Omega}  U |\nabla  W|^{2n-1} | \nabla|\nabla  W|^2 |\\
& \leq \frac{dn}{16} \int_{\Omega} |\nabla  W|^{2n-2} | \nabla|\nabla  W|^2 |^2 + c_6 r_*^2 \int_{\Omega}  U^2 |\nabla  W|^{2n}+\lambda \int_{\Omega}  U |\nabla  W|^{2n+2}\\
&\leq \frac{dn}{16} \int_{\Omega} |\nabla  W|^{2n-2} | \nabla|\nabla  W|^2 |^2+c_7(r_*^2+1) \int_\Omega U^{n+2}+c_8(r_*^2+1)\int_\Omega |\nabla W|^{2n+4}.
\end{aligned}
\end{equation}
Substituting \eqref{nw2p-3b} and \eqref{nw2p-3b*} into \eqref{nw2p-3a} gives
\begin{equation}\label{nw2p-3}
\begin{aligned}
I_2&\leq \frac{d}{4} \int_{\Omega} |\nabla  W|^{2n} |D^2  W|^2 + \frac{dn}{16} \int_{\Omega} |\nabla  W|^{2n-2} | \nabla|\nabla  W|^2 |^2\\
&\ \ \ \ + c_9(r_*^2+1) \int_{\Omega} |\nabla  W|^{2n+4}+ c_9(r_*^2+1) \int_{\Omega}  U^{n+2}.\\
\end{aligned}
\end{equation}
Following the same arguments as in the proof of \eqref{nw2p-3}, we obtain
\begin{equation}\label{nw2p-4}
\begin{aligned}
I_3 &= \lambda \int_{\Omega}  V \nabla \cdot ( W|\nabla  W|^{2n} \nabla  W)\\
&\leq \frac{d}{4} \int_{\Omega} |\nabla  W|^{2n} |D^2  W|^2 + \frac{dn}{16} \int_{\Omega} |\nabla  W|^{2n-2}| \nabla|\nabla  W|^2|^2 \\
&\ \ \ \  + c_{10}(r_*^2+1) \int_{\Omega} |\nabla  W|^{2n+4} + c_{11}(r_*^2+1) \int_{\Omega}  V^{n+2}.
\end{aligned}
\end{equation}
Using the facts $0\leq r(x,t) \leq r_*$, $|\Delta  W| \leq \sqrt{n}|D^2  W|$  and Young's inequality, one derives
\begin{equation}\label{nw2p-5}
\begin{aligned}
I_4 &= - \int_{\Omega} r(x,t) |\nabla  W|^{2n} \Delta  W - n \int_{\Omega} r(x,t) |\nabla  W|^{2n-2} \nabla(|\nabla  W|^2) \cdot \nabla  W\\
&\leq \frac{d}{4} \int_{\Omega} |\nabla  W|^{2n} |D^2  W|^2 + \frac{dn}{16} \int_{\Omega} |\nabla  W|^{2n-2}| \nabla|\nabla  W|^2|^2 \\
& \ \ \ \ 
+ c_{12}r_*^2\int_{\Omega} |\nabla  W|^{2n+4} + c_{13}r_*^2.
\end{aligned}
\end{equation}
We substitute \eqref{nw2p-2}, \eqref{nw2p-3}, \eqref{nw2p-4} and \eqref{nw2p-5} into \eqref{nw2p-1} to obtain
\begin{equation}\label{nw2p-6}
\frac{d}{dt}\int_\Omega |\nabla W|^{2n+2}+2(n+1)\mu\int_\Omega |\nabla W|^{2n+2}
\le c_{14}(r_*^2+1)\int_\Omega(1+|\nabla W|^{2n+4}+U^{n+2}+V^{n+2}).
\end{equation}
Therefore, for any $0\le t_0<t\le T$, integrating \eqref{nw2p-6} over $(t_0,t)$ gives
\begin{equation}\label{nw2p-7}
\begin{aligned}
&\int_\Omega |\nabla W(\cdot,t)|^{2n+2}-\int_\Omega |\nabla W(\cdot,t_0)|^{2n+2}
+2(n+1)\mu\int_{t_0}^t\int_\Omega |\nabla W(\cdot,s)|^{2n+2}ds\\
&\le c_{14}(r_*^2+1)\int_{t_0}^t\int_\Omega(1+|\nabla W|^{2n+4}+U^{n+2}+V^{n+2})ds.
\end{aligned}
\end{equation}
Applying Lemma \ref{BDSS} to \eqref{nw2p-7} and using Lemma \ref{l2p}, we derive
\begin{equation*}
\sup_{t\in(0,T)}\int_\Omega |\nabla W|^{2n+2}\le c_{15}(r_*^{2n+6}+1).
\end{equation*}
This completes the proof of Lemma \ref{nw2p}.
\end{proof}
\begin{lemma}\label{u-infty}
Let the conditions in Lemma \ref{l2p} hold. Then there exists a positive constant $C_5$ independent of \(\sigma\), $r_*$ and $\varepsilon$, such that
\begin{equation}\label{ui0}
\sup_{t \in (0, T)} \| U(\cdot, t)\|_{L^\infty} \leq C_5(r_*^{2n+6}+1).
\end{equation}
\end{lemma}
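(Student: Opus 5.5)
The plan is to bound $U$ in $L^\infty$ using the $U$-equation of \eqref{periodic3} alone. We treat $W$ as a given coefficient. Its gradient is controlled by Lemma \ref{nw2p}: $\sup_t\|\nabla W\|_{L^{2n+2}}^{2n+2}\le C_4(r_*^{2n+6}+1)$. Since $2n+2>n$, this is a supercritical integrability for the drift. We also use the uniform mass bound from \eqref{uv2p}.

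We first run an $L^p$ energy estimate for large finite $p$. Testing the $U$-equation with $U^{p-1}$ gives
\[
\frac1p\frac{d}{dt}\int_\Omega U^p+\frac{4(p-1)}{p^2}\int_\Omega|\nabla U^{p/2}|^2
\le (p-1)\chi_1\int_\Omega U^{p-1}\nabla U\cdot\nabla W+\varepsilon\sigma\int_\Omega U^{p+n}(1-U).
\]
Because $U^{p+n}(1-U)\le U^{p+n}-U^{p+n+1}$, the logistic term is bounded by a constant times $\int_\Omega U^p+1$, uniformly in $\varepsilon,\sigma$. The $A_1(1-\sigma)U$ term has a good sign and is dropped. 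The taxis term is estimated by Young's inequality:
\[
\int_\Omega U^{p-1}\nabla U\cdot\nabla W\le \eta\int_\Omega|\nabla U^{p/2}|^2+c\int_\Omega U^p|\nabla W|^2 .
\]
Hölder's inequality then gives
\[
\int_\Omega U^p|\nabla W|^2\le \|\nabla W\|_{L^{2n+2}}^2\,\|U^{p/2}\|_{L^{2(n+1)/n}}^2 .
\]
Since $2(n+1)/n<2n/(n-2)$, the Gagliardo--Nirenberg inequality interpolates $\|U^{p/2}\|_{L^{2(n+1)/n}}$ between $\|\nabla U^{p/2}\|_{L^2}$ and $\|U^{p/2}\|_{L^{2/p}}$. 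The latter is a power of the mass and is therefore bounded. The exponent on the gradient is strictly below $1$, so after Young's inequality we get
\[
\frac{d}{dt}\int_\Omega U^p+\int_\Omega U^p\le c_p\bigl(r_*^{2n+6}+1\bigr)^{\beta_p},
\]
where $c_p,\beta_p$ depend on $p$ but not on $\sigma$ or $\varepsilon$.

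Next we use periodicity. For any $t_0<t$, integrating the inequality over $(t_0,t)$ puts it exactly in the form of Lemma \ref{BDSS}, with $b=0$ and a constant $h$. This yields $\sup_t\|U\|_{L^p}\le C(p)$ for every finite $p$, uniformly in $\sigma$ and $\varepsilon$. The point is that no initial data are needed.

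To reach $L^\infty$ we use a Moser--Alikakos iteration with $p_k=2^k$. A cleaner alternative is to rewrite the equation using the Neumann heat semigroup on a shifted period. Periodicity lets us start at $t-T$, or at any earlier time $t-kT$. Smoothing estimates
\[
\|e^{t\Delta}\nabla\cdot f\|_{L^\infty}\le c\,(1+t^{-\frac12-\frac{n}{2q}})\|f\|_{L^q}
\]
are applied with $f=U\nabla W\in L^q$, where $q<2n+2$ but $q>n$; such a $q$ exists by Hölder's inequality and the $L^p$ bound for large $p$. The logistic term is bounded in $L^\infty$ after using $U^{n+1}(1-U)\le C$. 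This gives $\|U\|_{L^\infty}\le C$. Taking $r_*$-dependence through the whole chain, every constant is polynomial in $\|\nabla W\|_{L^{2n+2}}$. The main obstacle is bookkeeping this dependence so that it collapses to the linear factor $C_5(r_*^{2n+6}+1)$ claimed in \eqref{ui0}. I would try to achieve this via the semigroup route, where the $\nabla W$ dependence enters linearly through $\|U\nabla W\|_{L^q}$, after first bounding $\|U\|_{L^{p}}$ by a constant to the extent the Gagliardo--Nirenberg powers allow. If an exact linear power cannot be obtained, I would accept a larger polynomial power of $r_*^{2n+6}+1$. For the later use of the bound ($r_*\le r_0<1$, or $r_*$ fixed) this suffices.
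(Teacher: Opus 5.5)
Your qualitative argument is sound, and it takes a different route from the paper. You obtain bounds on $\sup_t\|U\|_{L^p}$, uniform in $\sigma$ and $\varepsilon$, for each finite $p$ in a single step. You do this by interpolating against the mass, which uses the Gagliardo--Nirenberg inequality with the quasi-norm $L^{2/p}$, $2/p<1$; that is legitimate. You then apply Lemma \ref{BDSS} with $b=0$ and reach $L^\infty$ through Duhamel's formula over one period with $q\in(n,2n+2)$. The paper instead runs a Moser iteration with $q_j=2^{j-1}$. At each level it interpolates $\|U^{q/2}\|_{L^{(2n+2)/n}}$ against $\|U^{q/2}\|_{L^1}$, i.e.\ against the previous level, and applies Lemma \ref{BDSS}. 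It needs no semigroup estimates.

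The gap is the one you flag yourself. The lemma asserts the explicit factor $C_5(r_*^{2n+6}+1)$, and your proposal does not derive it. Accepting a larger power gives a weaker statement. That weaker statement suffices qualitatively for the degree argument and for $r_*\le1$, but it is not \eqref{ui0}, and it would change the explicit exponents in Lemmas \ref{Du2n+2} and \ref{Nu2p}.

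Your heuristic is also off: $r_*$ does not enter only linearly through $\nabla W$ in $\|U\nabla W\|_{L^q}$, because $\|U\|_{L^s}$ itself depends on $r_*$. Your gradient exponent $a=a(p)$ tends to $1$, so Young's inequality produces $K^{1/(1-a)}$. Here $K=\sup_t\|\nabla W\|_{L^{2n+2}}^2\lesssim(r_*^{2n+6}+1)^{\frac1{n+1}}$ and $\frac1{1-a}=\frac{(n+1)(np-n+2)}{n+2}$, so
\[
\sup_t\|U\|_{L^p}\le C_p\,(r_*^{2n+6}+1)^{\frac{np-n+2}{(n+2)p}}.
\]

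The paper avoids $p$-dependent powers because its gradient exponent in \eqref{uign1} is $\frac{2n}{n+1}$ for every $q$. Each level therefore contributes the same factor:
\[
\sup_t\|U\|_{L^q}^q\le c\,(r_*^{2n+6}+1)\,q^{2(n+1)}\sup_t\|U\|_{L^{q/2}}^q .
\]
After taking $q$-th roots, the powers sum to $\sum_{j\ge2}2^{1-j}=1$.

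Alternatively, you can finish your own bookkeeping. In the Duhamel step, $\|U\nabla W\|_{L^q}\le\|U\|_{L^s}\|\nabla W\|_{L^{2n+2}}$ with $\frac1s=\frac1q-\frac1{2n+2}$, and Lemma \ref{nw2p} gives $\|\nabla W\|_{L^{2n+2}}\lesssim(r_*^{2n+6}+1)^{\frac1{2n+2}}$. As $q\downarrow n$, $s\downarrow\frac{2n(n+1)}{n+2}$, and the total exponent tends to $\frac{n^3+n^2+n+2}{n(n+1)(n+2)}<1$. The zeroth-order Duhamel terms can be estimated with the same $\|U\|_{L^s}$ and carry a smaller power. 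Carried through, your route gives $(r_*^{2n+6}+1)^{\gamma}$ with some $\gamma<1$, which implies \eqref{ui0}.
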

\begin{proof}
Multiplying the first equation of \eqref{periodic3} by $q U^{q-1}(q\geq 2)$, and integrating the results over $\Omega$, then using Young's inequality, we end up with
\begin{equation*}
\begin{aligned}
& \frac{d}{dt} \int_{\Omega}  U^q +   q(q-1) \int_{\Omega}  U^{q-2}|\nabla  U|^2 + A_1(1-\sigma)q\int_{\Omega}  U^{q}+\sigma\varepsilon q\int_{\Omega}  U^{n+1+q}\\
&=  \sigma \varepsilon q \int_{\Omega}   U^{n+q} + \chi_1 q(q-1) \int_{\Omega}  U^{q-1} \nabla  U \cdot \nabla  W  \\
&\leq  \sigma \varepsilon q \int_{\Omega} ( U^{n+1+q} +  U^{q}) + \frac{  q(q-1)}{2} \int_{\Omega}  U^{q-2}|\nabla  U|^2 + \frac{\chi_1^2 q(q-1)}{2 } \int_{\Omega}  U^q |\nabla  W|^2,
\end{aligned}
\end{equation*}
which, together with the facts $\int_\Omega U^{q-2}|\nabla U|^2=\frac{4}{q^2}\int_\Omega|\nabla U^{\frac q2}|^2$, $\sigma\in(0,1]$ and $\varepsilon\in(0,1)$, gives
\begin{equation}\label{ULI}
\begin{aligned}
\frac{d}{dt} \int_{\Omega}  U^q +  \frac{2(q-1)}{q}\int_\Omega|\nabla U^{\frac q2}|^2 
&\leq  q \int_{\Omega} U^{q} + \frac{\chi_1^2 q(q-1)}{2 } \int_{\Omega}  U^q |\nabla  W|^2.\\
\end{aligned}
\end{equation}
Using the H\"{o}lder inequality, \eqref{nw2p-0} and Gagliardo-Nirenberg inequality, we have
\begin{equation}\label{uign1}
\begin{split}
 \frac{\chi_1^2 q(q-1)}{2 } \int_{\Omega}  U^q |\nabla  W|^2
 &\leq \frac{\chi_1^2 q(q-1)}{2 }  \left( \int_{\Omega}  U^{\frac{(n+1)q}{n}} \right)^{\frac{n}{n+1}} \left( \int_{\Omega} |\nabla  W|^{2(n+1)} \right)^{\frac{1}{n+1}} \\
 &\leq c_1(r_*^{2+\frac{4}{n+1}}+1)q(q-1) \left( \int_{\Omega}  U^{\frac{(n+1)q}{n}} \right)^{\frac{n}{n+1}}\\
 &=c_1(r_*^{2+\frac{4}{n+1}}+1)q(q-1)\| U^{\frac{q}{2}}\|_{L^{\frac{2n+2}{n}}}^2\\
&\leq c_2(r_*^{2+\frac{4}{n+1}}+1)q(q-1)(\|\nabla  U^{\frac{q}{2}}\|^{\frac{2n}{n+1}}_{L^2}\| U^{\frac{q}{2}}\|_{L^1}^{\frac{2}{n+1}}+\| U^{\frac{q}{2}}\|_{L^1}^2)\\
&\leq \frac{q-1}{2q}\|\nabla  U^{\frac{q}{2}}\|^{2}_{L^2}+ c_3(r_*^{2n+6}+1)q^{2(n+1)}\| U^{\frac{q}{2}}\|_{L^1}^2,
\end{split}
\end{equation}
and
\begin{equation}\label{uign2}
\begin{split}
(q+1) \| U\|_{L^q}^q \leq  2q \| U^{\frac{q}{2}}\|_{L^2}^2
&\leq c_4q( \|\nabla  U^{\frac{q}{2}}\|_{L^2}^{\frac{2n}{n+2}} \| U^{\frac{q}{2}}\|_{L^1}^{\frac{4}{n+2}} +\| U^{\frac{q}{2}}\|_{L^1}^2) \\
&\leq \frac{q-1}{2q}\|\nabla  U^{\frac{q}{2}}\|_{L^2}^2 + c_5q^{n+1}  \| U^{\frac{q}{2}}\|_{L^1}^2.     
\end{split}
\end{equation}
Here and all subsequent constants $c_i>0$ are further independent of $q$.
Substituting \eqref{uign1} and \eqref{uign2} into \eqref{ULI}, we have
\begin{equation}\label{uign2-1}
\frac{d}{dt} \int_{\Omega}  U^q+ \int_{\Omega}  U^q \leq c_6(r_*^{2n+6}+1)q^{2(n+1)}{\bigg(\int_\Omega U^{\frac{q}{2}}\bigg)^2}.
\end{equation}
Therefore, for any $0\le t_0<t\le T$, integrating \eqref{uign2-1} over $(t_0,t)$, we obtain
\begin{equation}\label{u-i3}
\int_\Omega U^q(x,t)-\int_\Omega U^q(x,t_0)
+\int_{t_0}^t\int_\Omega U^q(x,s)
\le c_6(r_*^{2n+6}+1)q^{2(n+1)}
\int_{t_0}^t\|U(\cdot,s)\|_{L^{\frac q2}}^q\,ds.
\end{equation}
Applying Lemma \ref{BDSS} to \eqref{u-i3} yields
\begin{equation*}
\begin{split}
\sup_{t\in(0,T)}\|U(\cdot,t)\|_{L^q}^q
&\le c_7(r_*^{2n+6}+1)q^{2(n+1)}
\int_0^T \|U(\cdot,t)\|_{L^{\frac q2}}^q\,dt\\
&\le c_8(r_*^{2n+6}+1)q^{2(n+1)}
\sup_{t\in(0,T)}\|U(\cdot,t)\|_{L^{\frac q2}}^q .
\end{split}
\end{equation*}

Define $M_j=\sup_{t\in(0,T)}\|U(\cdot,t)\|_{L^{q_j}}$ with $q_j=2^{j-1}$ ($j\in\mathbb{Z}^+$). Then, for $j\ge2$, we have
\begin{equation}\label{Moser}
\begin{split}
\sup_{t \in (0, T)} \| U(\cdot, t)\|_{L^{2^{j-1}}}=M_j
&\leq c_8^{\frac{1}{2^{j-1}}}(r_*^{2n+6}+1)^{\frac{1}{2^{j-1}}}2^{\frac{2(n+1)(j-1)}{2^{j-1}}}M_{j-1}\\
&\leq c_8^{\sum_{i=2}^{j}\frac{1}{2^{i-1}}}(r_*^{2n+6}+1)^{\sum_{i=2}^{j}\frac{1}{2^{i-1}}}\cdot2^{2(n+1)\sum_{i=2}^{j}\frac{i-1}{2^{i-1}}}M_1\\
&=c_8^{(1-\frac{1}{2^{j-1}})}(r_*^{2n+6}+1)^{(1-\frac{1}{2^{j-1}})}\cdot2^{2(n+1)(2-\frac{j+1}{2^{j-1}})}M_1.
\end{split}
\end{equation}
Letting $j\to \infty$ and noting \eqref{uv2p}, we deduce from  \eqref{Moser} that 
\begin{equation*}
\sup_{t \in (0,T)}\|U(\cdot,t)\|_{L^\infty}\leq c_9(r_*^{2n+6}+1).
\end{equation*}
This completes the proof of Lemma \ref{u-infty}.
\end{proof}

\begin{lemma}\label{Du2n+2}
Let the conditions in Lemma \ref{l2p} hold. Then we have
\begin{equation}\label{nu2p}
\int_0^T\int_\Omega|D^2  U|^{n+2} \leq C_6(r_*^{4(n+2)^3}+1),
\end{equation}
and 
\begin{equation}\label{0424*}
\int_0^T\int_\Omega |\nabla U|^{2n+4}\leq C_7(r_*^{4(n+2)^3}+1),
\end{equation}
where $C_6$ and $C_7$ are two positive constants  independent of \(\sigma\), $r_*$ and $\varepsilon$.
\end{lemma}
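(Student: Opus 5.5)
We write the $U$-equation of \eqref{periodic3} in non-divergence form
\begin{equation*}
U_t-\Delta U+U=f:=-\chi_1\nabla U\cdot\nabla W-\chi_1 U\Delta W+U-A_1(1-\sigma)U+\sigma\varepsilon U^{n+1}(1-U),
\end{equation*}
and apply the maximal regularity estimate \eqref{LT1} of Lemma \ref{LT} with $p=n+2$:
\begin{equation*}
\int_0^T\|U\|_{W^{2,n+2}}^{n+2}\le c\int_0^T\int_\Omega |f|^{n+2}.
\end{equation*}
The zero-order terms are controlled by Lemma \ref{u-infty}. There is one subtlety: $A_1$ depends on $\|\Delta W\|_{L^\infty}$, so it is not a priori $\sigma$-independent. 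To handle this, we keep $-A_1(1-\sigma)U$ on the left-hand side, i.e.\ we use the operator $z_t-\Delta z+(1+A_1(1-\sigma))z$. One then has to check that the constant in Lemma \ref{LT} is uniform in the absorption coefficient $a_2\ge1$, which holds because the absorption term only helps the estimate. The reaction terms contribute at most $c(r_*^{2n+6}+1)^{n+2}$, via $\|U\|_{L^\infty}\le C_5(r_*^{2n+6}+1)$ and $\sigma,\varepsilon\le1$. The term $U\Delta W$ is bounded using \eqref{dwp}:
\begin{equation*}
\int_0^T\int_\Omega |U\Delta W|^{n+2}\le C_5^{n+2}(r_*^{2n+6}+1)^{n+2}C_2r_*^{n+2}.
\end{equation*}

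The main obstacle is the gradient coupling term $\nabla U\cdot\nabla W$. We first use H\"older's inequality and Young's inequality:
\begin{equation*}
\int_0^T\int_\Omega|\nabla U|^{n+2}|\nabla W|^{n+2}\le \eta\int_0^T\int_\Omega|\nabla U|^{2n+4}+C_\eta\int_0^T\int_\Omega|\nabla W|^{2n+4}.
\end{equation*}
The last term is $\le C_\eta C_3r_*^{2n+4}$ by \eqref{nw2p+2}. For the first term we use the Gagliardo--Nirenberg inequality, exactly as in \eqref{l2p2-3}:
\begin{equation*}
\|\nabla U\|_{L^{2n+4}}^{2n+4}\le c\|U\|_{L^\infty}^{n+2}\|D^2U\|_{L^{n+2}}^{n+2}+c\|U\|_{L^\infty}^{2n+4}.
\end{equation*}
Since $\|U\|_{L^\infty}^{n+2}$ may be large when $r_*$ is large, we choose $\eta$ proportional to $(r_*^{2n+6}+1)^{-(n+2)}$. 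This makes $c\,\eta\|U\|_{L^\infty}^{n+2}$ smaller than half the constant on the left, so the $D^2U$ contribution can be absorbed into $\int_0^T\|U\|_{W^{2,n+2}}^{n+2}$. The price is $C_\eta\sim \eta^{-1}$, which raises the power of $r_*$. Collecting the powers, they are at most of order $(2n+6)(n+2)^2+(2n+4)$, and this is dominated by $4(n+2)^3$ once one uses $r_*^a\le r_*^b+1$ for $a\le b$. This yields \eqref{nu2p}.

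Finally, \eqref{0424*} follows by integrating the Gagliardo--Nirenberg inequality above over $(0,T)$ and inserting \eqref{nu2p} and \eqref{ui0}. The resulting exponent is again bounded by $4(n+2)^3$ up to the additive constant. All constants depend only on $n,\Omega,T,\chi_1,\lambda,\mu,d$, so they are independent of $\sigma$, $r_*$ and $\varepsilon$.
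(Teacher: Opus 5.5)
Your proposal is correct and is essentially the paper's proof: maximal $L^{n+2}$-regularity for the $U$-equation, the Gagliardo--Nirenberg bound \eqref{gn1}, a Young splitting of $|\nabla U|^{n+2}|\nabla W|^{n+2}$ weighted by $\|U\|_{L^\infty}^{-(n+2)}$ so that the $D^2U$ contribution is absorbed, and then \eqref{0424*} from \eqref{gn1} once more; the only difference in this step is that the paper chooses the Young weight pointwise in $t$, while you take it from the uniform bound \eqref{ui0}. Your handling of $A_1$ (keeping $A_1(1-\sigma)U$ on the left and using that the maximal-regularity constant is uniform in the absorption coefficient) addresses a point the paper passes over, since the paper puts $A_1\sigma U$ into the right-hand side as if $A_1$ were a fixed constant; just note that the reaction term contributes $(r_*^{2n+6}+1)^{(n+2)^2}$ rather than $(r_*^{2n+6}+1)^{n+2}$, which your final exponent count $(2n+6)(n+2)^2+(2n+4)$ already covers.
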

\begin{proof}
In the case $U\equiv 0$, \eqref{nu2p} and \eqref{0424*} hold trivially. We now turn to the nontrivial case $U\not\equiv 0$, hence $\| U(\cdot,t)\|_{L^\infty}\not=0$. We first rewrite $U$-equation in \eqref{periodic3} as
\begin{equation}\label{U*}
\begin{cases}
 U_t-\Delta  U+A_1 U=F_1( x, t), &x\in\Omega, \ t>0,\\
{\nabla U}\cdot\nu=0, &x\in\partial\Omega, \ t>0, \\
 U(x,t)= U(x,t+T), &x\in\Omega, t\geq0,
\end{cases}
\end{equation}
where $F_1( x,  t):=-\chi_1\nabla \cdot(  U\nabla W)+A_1\sigma U+ \sigma\varepsilon  U^{n+1}(1- U).$   Applying \eqref{LT1} with $p=n+2$ to the system \eqref{U*}, then using \eqref{dwp} and \eqref{ui0}, we have 
\begin{align}\notag
\int_0^T\int_\Omega |D^2 U|^{n+2}
&\leq \int_0^T\left(\chi_1\|\nabla \cdot ( U\nabla  W)\|_{L^{n+2}}^{n+2}+\|A_1\sigma U+\sigma\varepsilon U^{n+1}(1- U)\|_{L^{n+2}}^{n+2}\right)\\ \notag
&\leq  c_1\int_0^T\left(\|\nabla  U\cdot\nabla  W\|_{L^{n+2}}^{n+2}+\| U\|_{L^{\infty}}^{n+2}\|\Delta  W\|_{L^{n+2}}^{n+2}+\|A_1U+ U^{n+1}+ U^{n+2}\|_{L^\infty}^{n+2}\right)\\ 
\label{Du2n+2-1}
&\leq c_1\int_0^T\|\nabla  U\cdot\nabla  W\|_{L^{n+2}}^{n+2}+c_2\left(r_*^{3(n+2)^3}+1\right).
\end{align}
Applying the Gagliardo-Nirenberg inequality and \cite[Lemma 1]{1974-lemma1}, we can find a constant $C_N>0$ such that 
\begin{equation}\label{gn1}
\begin{aligned}
\|\nabla U(\cdot,t)\|_{L^{2n+4}}^{2n+4}
&\leq c_3\big(\|D^2 U(\cdot,t)\|_{L^{n+2}}^{\frac{1}{2}}\| U(\cdot,t)\|_{L^\infty}^{\frac{1}{2}}+\| U(\cdot,t)\|_{L^\infty}\big)^{2n+4}\\
&\leq C_N\| U(\cdot,t)\|_{L^\infty}^{n+2}\|D^2 U(\cdot,t)\|_{L^{n+2}}^{n+2}+C_N\| U(\cdot,t)\|_{L^\infty}^{2n+4}.
\end{aligned}
\end{equation}
Then we apply \eqref{gn1} and Young's inequality to derive
\begin{equation}\label{Du2n+2-2}
\begin{aligned}
c_1\|\nabla  U\cdot\nabla  W\|_{L^{n+2}}^{n+2} 
&= c_1\int_\Omega|\nabla U|^{n+2}|\nabla  W|^{n+2} \\
&\leq \frac{1}{2C_N\| U(\cdot,t)\|_{L^\infty}^{n+2}}\|\nabla U(\cdot,t)\|_{L^{2n+4}}^{2n+4}+\frac{c_1^2C_N\| U(\cdot,t)\|_{L^\infty}^{n+2}}{2}\|\nabla W(\cdot,t)\|_{L^{2n+4}}^{2n+4}\\
&\leq \frac{1}{2}\int_\Omega |D^2 U|^{n+2}+\frac{1}{2}\| U(\cdot,t)\|_{L^\infty}^{n+2}+\frac{c_1^2 C_N\| U(\cdot,t)\|_{L^\infty}^{n+2}}{2}\|\nabla W(\cdot,t)\|_{L^{2n+4}}^{2n+4}.
\end{aligned}
\end{equation}
Substituting \eqref{Du2n+2-2} into \eqref{Du2n+2-1}, then utilizing \eqref{nw2p+2} and \eqref{ui0}, we derive
\begin{equation}\label{0424}
\begin{aligned}
\frac{1}{2}\int_0^T\int_\Omega |D^2 U|^{n+2}
&\leq  \frac{T}{2}\sup\limits_{t\in (0,T)}\| U(\cdot,t)\|_{L^\infty}^{n+2}+ \frac{c_1^2C_N}{2}\sup\limits_{t\in (0,T)}\| U(\cdot,t)\|_{L^\infty}^{n+2}\int_0^T\int_\Omega|\nabla  W|^{2n+4}\\
&\quad + c_4(r_*^{3(n+2)^3}+1)\\
&\leq c_5\big(r_*^{2(n+3)(n+2)}
+ r_*^{2(n+2)(n+4)}
+ r_*^{3(n+2)^3}+1\big)\\
&\leq c_6\big(r_*^{3(n+2)^3}+1\big).
\end{aligned}
\end{equation}
With \eqref{0424} and \eqref{gn1}, we use Lemma \ref{u-infty} to get
\begin{equation}\label{0424-1}
\begin{aligned}
\int_0^T\int_\Omega |\nabla U|^{2n+4}
&\leq C_N\sup\limits_{t\in(0,T)}\| U(\cdot,t)\|_{L^\infty}^{n+2}\int_0^T\|D^2 U(\cdot,t)\|_{L^{n+2}}^{n+2}+C_N\sup\limits_{t\in(0,T)}\| U(\cdot,t)\|_{L^\infty}^{2n+4}\\
&\leq c_7 (r_*^{2n+6}+1)^{n+2}(r_*^{3(n+2)^3}+1\big)+c_8 (r_*^{2n+6}+1)^{2n+4}\\
&\leq c_8(r_*^{2(n+3)(n+2)+3(n+2)^3}+r_*^{4(n+3)(n+2)}+1)\\
&\leq c_{9}(r_*^{4(n+2)^3}+1).
\end{aligned}
\end{equation}
Then the combination of \eqref{0424} with \eqref{0424-1} finishes the proof of Lemma \ref{Du2n+2}.

\end{proof}
\begin{lemma}\label{Nu2p}
Let conditions in Lemma \ref{l2p} hold. Then there exist two positive constants $C_8$ and $C_9$ independent of \(\sigma\), $r_*$ and $\varepsilon$, such that
\begin{equation}\label{nu2n_est} 
\sup_{t \in (0,T)}\int_{\Omega} |\nabla  U|^{2n+2}\leq C_8(r_*^{5(n+2)^3}+1), 
\end{equation}
and 
\begin{equation}\label{vi0}
\sup_{t \in (0, T)} \| V(\cdot, t)\|_{L^\infty} \leq C_9(r_*^{5(n+2)^3}+1). 
\end{equation}
\end{lemma}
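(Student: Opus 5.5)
The plan is to follow the same two-stage pattern used for $W$ and $U$: first a differential inequality for $\int_\Omega|\nabla U|^{2n+2}$ closed by Lemma \ref{BDSS}, then a Moser iteration for $V$ with $\nabla U$ in the role that $\nabla W$ played in Lemma \ref{u-infty}.

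\emph{Gradient bound for $U$.} Differentiate $\frac{1}{2n+2}\int_\Omega|\nabla U|^{2n+2}$ along the first equation of \eqref{periodic3}. Since $U_t$ involves $\Delta W$, I will integrate by parts once and write
\[
\int_\Omega|\nabla U|^{2n}\nabla U\cdot\nabla U_t=-\int_\Omega \nabla\cdot(|\nabla U|^{2n}\nabla U)\,U_t .
\]
\begin{itemize}
\item The diffusion part gives the dissipative terms $-\int_\Omega|\nabla U|^{2n}|D^2U|^2$ and $-\frac n2\int_\Omega|\nabla U|^{2n-2}|\nabla|\nabla U|^2|^2$.
\item The boundary integral is handled by \cite[Lemma 2.6]{Wang-2019}, exactly as in \eqref{wang}.
\item The taxis term $\chi_1\nabla\cdot(U\nabla W)=\chi_1\nabla U\cdot\nabla W+\chi_1U\Delta W$ is paired with $\nabla\cdot(|\nabla U|^{2n}\nabla U)$, which is bounded by $c|\nabla U|^{2n}|D^2U|$. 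By Young's inequality it is absorbed into the dissipation at the cost of
\[
c\int_\Omega|\nabla U|^{2n}\bigl(|\nabla U|^2|\nabla W|^2+U^2|D^2W|^2\bigr).
\]
\item The reaction terms $A_1(1-\sigma)U$ and $\sigma\varepsilon U^{n+1}(1-U)$ are controlled using Lemma \ref{u-infty}.
\end{itemize}
A further Young's inequality then yields
\[
\frac{d}{dt}\int_\Omega|\nabla U|^{2n+2}+\int_\Omega|\nabla U|^{2n+2}\le c\int_\Omega\bigl(|\nabla U|^{2n+4}+|\nabla W|^{2n+4}\bigr)+c\,\|U\|_{L^\infty}^{n+2}\int_\Omega|D^2W|^{n+2}\Bigl(\ldots\Bigr)+c .
\]
A cleaner route for the term $\int_\Omega|\nabla U|^{2n}U^2|D^2W|^2$ is to bound it by
\[
\|U\|_{L^\infty}^2\Bigl(\int_\Omega|\nabla U|^{2n+4}+\int_\Omega|D^2W|^{n+2}\Bigr)
\]
via Young's inequality with exponents $\frac{n+2}{n}$ and $\frac{n+2}{2}$. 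All right-hand terms are time-integrable on $(0,T)$ by \eqref{dwp}, \eqref{nw2p+2}, \eqref{0424*} and \eqref{ui0}. Lemma \ref{BDSS} with $b=0$ then gives \eqref{nu2n_est}. The power of $r_*$ is obtained by multiplying the exponents, e.g. $(r_*^{2n+6})^2\cdot r_*^{4(n+2)^3}$, and these are all dominated by $r_*^{5(n+2)^3}+1$.

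\emph{$L^\infty$ bound for $V$.} Repeat the proof of Lemma \ref{u-infty} verbatim with $(U,W,\chi_1)$ replaced by $(V,U,\chi_2)$. Testing the $V$-equation by $qV^{q-1}$ gives \eqref{ULI} with $\int_\Omega V^q|\nabla U|^2$. Hölder's inequality with $\sup_t\int_\Omega|\nabla U|^{2n+2}$ from \eqref{nu2n_est}, followed by the Gagliardo–Nirenberg step as in \eqref{uign1}, gives
\[
\frac{d}{dt}\int_\Omega V^q+\int_\Omega V^q\le c(r_*^{5(n+2)^3}+1)^{\frac1{n+1}\cdot(n+1)}q^{2(n+1)}\Bigl(\int_\Omega V^{q/2}\Bigr)^2 .
\]
Here the factor $(\sup\|\nabla U\|_{2n+2}^2)^{(n+1)}$ arises when the Young exponent $n+1$ is applied to the gradient term. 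Lemma \ref{BDSS} and the Moser iteration \eqref{Moser}, starting from $M_1$ bounded by \eqref{uv2p}, then give \eqref{vi0}, since the product of powers telescopes to a first power.

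\emph{Main obstacle.} The main obstacle is the first step. The $\nabla U$ equation contains $D^2W$ and, after integration by parts, effectively third derivatives of $W$. These must be avoided by putting the divergence onto $|\nabla U|^{2n}\nabla U$ and using only the space–time $L^{n+2}$ bound \eqref{dwp} on $D^2W$, which is why the $L^\infty$ bound on $U$ and the integrated bound \eqref{0424*} are needed first. The exponent bookkeeping is crude but only needs to land below $5(n+2)^3$.
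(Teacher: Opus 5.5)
Your proposal is correct and follows essentially the same route as the paper. The gradient bound uses the same differential inequality for $\int_\Omega|\nabla U|^{2n+2}$: the boundary term via \cite[Lemma 2.6]{Wang-2019}, the taxis term handled by pairing $\nabla\cdot(|\nabla U|^{2n}\nabla U)$ with $\nabla\cdot(U\nabla W)$, and Young's inequality reducing everything to $|\nabla U|^{2n+4}$, $|\nabla W|^{2n+4}$ and $|D^2W|^{n+2}$, closed by Lemma \ref{BDSS} with \eqref{dwp}, \eqref{nw2p+2}, \eqref{0424*}, \eqref{ui0}. The $L^\infty$ bound for $V$ is then the Moser iteration of Lemma \ref{u-infty} with $\nabla U$ in place of $\nabla W$, exactly as in the paper, and your exponent bookkeeping does stay below $5(n+2)^3$.
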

\begin{proof}
Using the integration by parts,  we deduce from \eqref{periodic3} that
\begin{equation} \label{nu2p-1*}
\begin{aligned}
\frac{1}{2(n+1)} \frac{d}{dt} \int_{\Omega} |\nabla U|^{2(n+1)}
&= \int_{\Omega} |\nabla U|^{2n} \nabla U \cdot \nabla U_t  \\
&= \int_{\Omega} |\nabla U|^{2n} \nabla U \cdot \nabla \big( \Delta U - \chi_1 \nabla \cdot ( U \nabla W) + F( U) \big)  \\
&= \int_{\Omega} |\nabla U|^{2n} \nabla U \cdot \nabla \Delta U
   - \chi_1 \int_{\Omega} |\nabla U|^{2n} \nabla U \cdot \nabla (\nabla \cdot ( U \nabla W))  \\
& \ \ \ \ + \int_{\Omega} |\nabla U|^{2n} \nabla U \cdot \nabla F(U),
\end{aligned}
\end{equation}
where 
$$F(U):=-A_1(1-\sigma)U+\sigma\varepsilon U^{n+1}(1-U).$$
Applying the equality $\nabla  U \cdot \nabla \Delta  U = \frac{1}{2}\Delta |\nabla  U|^2 - |D^2  U|^2$ and using the integration by parts again, we have
\begin{equation*}
\begin{split}
&\int_{\Omega} |\nabla U|^{2n} \nabla U \cdot \nabla \Delta U\\
&=\frac{1}{2} \int_{\Omega} |\nabla  U|^{2n} \Delta |\nabla  U|^2 - \int_{\Omega} |\nabla  U|^{2n} |D^2  U|^2\\
&=\frac{1}{2} \int_{\partial \Omega} |\nabla  U|^{2n} \frac{\partial|\nabla  U|^2}{\partial\nu} \, dS - \frac{n}{2} \int_{\Omega} |\nabla  U|^{2n-2}| \nabla |\nabla  U|^2|^2 - \int_{\Omega} |\nabla  U|^{2n} |D^2  U|^2,
\end{split}
\end{equation*}
which updates \eqref{nu2p-1*} as 
\begin{equation}\label{nu2p-1}
\begin{aligned}
&\frac{1}{2(n+1)} \frac{d}{dt} \int_{\Omega} |\nabla U|^{2(n+1)}+ \frac{n}{2} \int_{\Omega} |\nabla  U|^{2n-2}| \nabla |\nabla  U|^2|^2 +\int_{\Omega} |\nabla  U|^{2n} |D^2  U|^2\\
&= \frac{1}{2} \int_{\partial \Omega} |\nabla  U|^{2n} \frac{\partial|\nabla  U|^2}{\partial\nu} \, dS -\chi_1 \int_{\Omega} |\nabla U|^{2n} \nabla U \cdot \nabla (\nabla \cdot ( U \nabla W))\\
&\ \ \ \ + \int_{\Omega} |\nabla U|^{2n} \nabla U \cdot \nabla F( U)\\
&=:J_1+J_2+J_3.
\end{aligned}
\end{equation}
It follows from \cite[Lemma 2.6]{Wang-2019} that
\begin{equation}\label{nu2p-2}
\begin{aligned}
J_1
&\leq \frac{n}{4}\int_{\Omega}|\nabla  U|^{2n-2}\bigl|\nabla| \nabla  U|^2\bigr|^2 + c_{1} \int_{\Omega}|\nabla  U|^{2(n+1)}.
\end{aligned}
\end{equation}
Using the facts $ |\Delta \phi |\leq \sqrt{n}|D^2 \phi|$  and $\nabla|\nabla U|^2=2\nabla U \cdot D^2U$, we estimate the term $J_2$ as follows:
\begin{equation}\label{nu2p-J2-1}
\begin{aligned}
J_2&= \chi_1 \int_{\Omega} \nabla \cdot (|\nabla  U|^{2n} \nabla  U) \nabla \cdot ( U \nabla  W) \\
&\leq \chi_1 \int_{\Omega} \left( |\nabla  U|^{2n}|\Delta  U| + n|\nabla  U|^{2n-1} \left|  \nabla |\nabla  U|^2 \right| \right)\left( |\nabla  U| |\nabla  W| +  U |\Delta W| \right)\\
&\leq \chi_1 (2n+\sqrt{n}) \int_{\Omega} |\nabla  U|^{2n} |D^2  U| \left( |\nabla  U| |\nabla  W| +  U \sqrt{n}|D^2  W| \right) \\
&\leq \frac{1}{2} \int_{\Omega} |\nabla  U|^{2n} |D^2  U|^2 + c_{2} \int_{\Omega} |\nabla  U|^{2(n+1)} |\nabla  W|^2  +c_2  \int_{\Omega} U^2|\nabla  U|^{2n} |D^2  W|^2.
\end{aligned}
\end{equation}
As for the term $J_3$, we derive that  
\begin{equation}\label{nu2p-4}
\begin{split}
J_3 &= \int_{\Omega} |\nabla  U|^{2n} \nabla  U \cdot [-A_1(1-\sigma)\nabla U+\sigma\varepsilon (n+1) U^{n}\nabla U-\sigma\varepsilon (n+2)U^{n+1}\nabla U]\\
&\leq c_3 \int_{\Omega} (1+U^{n}+U^{n+1})|\nabla  U|^{2(n+1)}. 
\end{split}
\end{equation}
Substituting \eqref{nu2p-2} - \eqref{nu2p-4} into \eqref{nu2p-1}, and applying \eqref{ui0} along with Young's inequality yield
\begin{equation}\label{nu2p-5}
\begin{aligned}
&\frac{d}{dt} \int_{\Omega} |\nabla U|^{2(n+1)} + 2(n+1)\int_{\Omega} |\nabla U|^{2(n+1)} \\
&\le c_4 \int_{\Omega} |\nabla W|^2 |\nabla U|^{2(n+1)}
+c_4 \int_{\Omega} U^2|\nabla U|^{2n}|D^2W|^2
+c_4\int_{\Omega}(1+U^{n}+U^{n+1})|\nabla U|^{2(n+1)} \\
&\le c_4 \int_{\Omega} |\nabla W|^2|\nabla U|^{2(n+1)}
+c_5(r_*^{4(n+3)}+1)\int_{\Omega}|\nabla U|^{2n}|D^2W|^2\\
&\quad +c_6(r_*^{2(n+3)(n+1)}+1)\int_{\Omega}|\nabla U|^{2(n+1)} \\
&\le c_7\int_{\Omega}|\nabla W|^{2n+4}
+c_{11}(r_*^{4(n+3)}+1)\int_{\Omega}|D^2W|^{n+2}
+c_{13}(r_*^{2(n+3)(n+1)}+1) \\
&\quad +c_{12}(r_*^{2(n+3)(n+1)}+r_*^{4(n+3)}+1)\int_{\Omega}|\nabla U|^{2n+4}\\
&=:h(t).
\end{aligned}
\end{equation}
For any $0\le t_0<t\le T$, integrating \eqref{nu2p-5} over $(t_0,t)$ gives
\begin{equation}\label{0701-1}
\begin{aligned}
\int_\Omega |\nabla U(\cdot,t)|^{2(n+1)}-\int_\Omega |\nabla U(\cdot,t_0)|^{2(n+1)}
+2(n+1)\int_{t_0}^t\int_\Omega |\nabla U(\cdot,s)|^{2(n+1)}ds\le \int_{t_0}^t h(s)ds.
\end{aligned}
\end{equation}
We use \eqref{dwp}, \eqref{nw2p+2} and \eqref{0424*} to get
\begin{equation}\label{0707}
\begin{split}
\int_{0}^T h(s)ds& \leq c_{14}\big( r_*^{2(n+2)}+(r_*^{4(n+3)}+1)r_*^{n+2}+r_*^{2(n+3)(n+1)}+1\\
&\quad +(r_*^{2(n+3)(n+1)}+r_*^{4(n+3)}+1)(r_*^{4(n+2)^3}+1)\big)\\
&\leq  c_{15} (r_*^{5(n+2)^3}+1).
\end{split}
\end{equation}
Applying Lemma \ref{BDSS} to \eqref{0701-1} and using \eqref{0707} yield
\begin{equation*}
\sup_{t\in(0,T)}\int_\Omega |\nabla U|^{2(n+1)}
\le c_{16}(r_*^{5(n+2)^3}+1),
\end{equation*}
which gives \eqref{nu2n_est}.

Finally, with \eqref{nu2n_est}, following the similar Moser iteration arguments as in the proof of Lemma \ref{u-infty}, we derive 
\begin{equation*}
\sup_{t \in (0, T)} \| V(\cdot, t)\|_{L^\infty} \leq c_{17}(r_*^{5(n+2)^3}+1).
\end{equation*}
which gives \eqref{vi0}. Thus, the proof of Lemma \ref{Nu2p} is complete.
\end{proof}

\begin{lemma}\label{NV2n+2}
Let the conditions in Lemma \ref{l2p} automatically hold. Then there exists a positive constant $C_{10}$ independent of \(\sigma\), $r_*$ and $\varepsilon$, such that
\begin{equation}\label{0424*1}
\int_0^T\int_\Omega |\nabla V|^{2n+4}\leq C_{10}(r_*^{10(n+2)^5}+1).
\end{equation}
\end{lemma}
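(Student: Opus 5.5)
We mirror the proof of Lemma \ref{Du2n+2}, with $V$ in place of $U$ and $U$ in place of $W$. The target is a bound on $\int_0^T\int_\Omega|D^2V|^{n+2}$; the gradient bound \eqref{0424*1} then follows by Gagliardo--Nirenberg. If $V\equiv0$ there is nothing to prove, so assume $\|V(\cdot,t)\|_{L^\infty}\neq0$.

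First, we rewrite the $V$-equation of \eqref{periodic3} in the form
\[
V_t-\Delta V+A_2V=F_2(x,t),
\]
where
\[
F_2:=-\chi_2\nabla\cdot(V\nabla U)+A_2\sigma V+\sigma\varepsilon V^{n+1}(1-V).
\]
Applying \eqref{LT1} with $p=n+2$ gives
\[
\int_0^T\int_\Omega|D^2V|^{n+2}\le c_1\int_0^T\int_\Omega|\nabla V|^{n+2}|\nabla U|^{n+2}+c_1\sup_{t}\|V\|_{L^\infty}^{n+2}\int_0^T\int_\Omega|D^2U|^{n+2}+c_1T\Big(\sup_t\|V\|_{L^\infty}^{2n+3}+1\Big)|\Omega|.
\]
The second term is controlled by \eqref{vi0} and \eqref{nu2p}, giving a factor of order $r_*^{5(n+2)^4+4(n+2)^3}$. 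The third term is of order $(r_*^{5(n+2)^3}+1)^{2n+4}$.

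Next, we absorb the cross term $\int|\nabla V|^{n+2}|\nabla U|^{n+2}$, which is the only delicate step. As in \eqref{gn1}, we have
\[
\|\nabla V\|_{L^{2n+4}}^{2n+4}\le C_N\|V\|_{L^\infty}^{n+2}\|D^2V\|_{L^{n+2}}^{n+2}+C_N\|V\|_{L^\infty}^{2n+4}.
\]
Young's inequality with weight $(2C_N\|V(\cdot,t)\|_{L^\infty}^{n+2})^{-1}$, exactly as in \eqref{Du2n+2-2}, then bounds the cross term by
\[
\tfrac12\int_\Omega|D^2V|^{n+2}+\tfrac12\|V\|_{L^\infty}^{n+2}+c\|V\|_{L^\infty}^{n+2}\|\nabla U\|_{L^{2n+4}}^{2n+4}.
\]
Integrating in time and using \eqref{0424*} for $\int_0^T\|\nabla U\|_{L^{2n+4}}^{2n+4}$ together with \eqref{vi0}, we absorb the $D^2V$ term into the left-hand side. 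This yields
\[
\int_0^T\int_\Omega|D^2V|^{n+2}\le c\Big(r_*^{5(n+2)^4+4(n+2)^3}+r_*^{10(n+2)^4}+1\Big).
\]

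Finally, we insert this into the Gagliardo--Nirenberg bound:
\[
\int_0^T\int_\Omega|\nabla V|^{2n+4}\le C_N\sup_t\|V\|_{L^\infty}^{n+2}\int_0^T\|D^2V\|_{L^{n+2}}^{n+2}+C_NT\sup_t\|V\|_{L^\infty}^{2n+4}.
\]
The exponents that appear are at most about $5(n+2)^4+10(n+2)^4$ and $10(n+2)^4$. Each of these is at most $10(n+2)^5$, since $n+2\ge3$. Using $r_*^a\le r_*^b+1$ for $a\le b$, every term is dominated by $C_{10}(r_*^{10(n+2)^5}+1)$, which is \eqref{0424*1}.

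The main obstacle is the absorption step. The Young weight depends on $\|V(\cdot,t)\|_{L^\infty}$, so the estimate must be done pointwise in $t$ before integrating, just as in Lemma \ref{Du2n+2}. The remaining work is exponent bookkeeping, which is crude but harmless.
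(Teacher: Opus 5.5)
Your proposal follows the paper's proof essentially step for step: Lemma \ref{LT} with $p=n+2$ applied to the $V$-equation, absorption of $\int_\Omega|\nabla V|^{n+2}|\nabla U|^{n+2}$ by Young's inequality applied pointwise in $t$ together with the Gagliardo--Nirenberg bound as in \eqref{gn1}, and a final Gagliardo--Nirenberg step; so the argument is correct. The one slip is in the exponent bookkeeping, and it does not affect the conclusion. The reaction term $\sigma\varepsilon V^{n+1}(1-V)$ contributes $\sup_t\|V\|_{L^\infty}^{(n+2)^2}$, which is of order $r_*^{5(n+2)^5}$, not $\|V\|_{L^\infty}^{2n+3}$. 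Hence the $D^2V$ bound is $c(r_*^{5(n+2)^5}+1)$, as in \eqref{0424d2v}, and the final exponent becomes $5(n+2)^4+5(n+2)^5\le 10(n+2)^5$, which still yields \eqref{0424*1}.
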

\begin{proof}
If $V\equiv 0$, \eqref{0424*1} holds. It remains to consider $V\not\equiv 0$, for which $\| V(\cdot,t)\|_{L^\infty}\not=0$.  We rewrite the $V$-equation in \eqref{periodic3} as
\begin{equation}\label{V*}
\begin{cases}
V_t-\Delta V+A_2V=F_2(x,t), & x\in\Omega,\ t>0,\\
\nabla V\cdot\nu=0, & x\in\partial\Omega,\ t>0,\\
V(x,t)=V(x,t+T), & x\in\Omega,\ t\geq 0,
\end{cases}
\end{equation}
where
\begin{equation*}
F_2(x,t)=-\chi_2\nabla V\cdot\nabla U-\chi_2V\Delta U+A_2\sigma V+\sigma\varepsilon V^{n+1}(1-V).
\end{equation*}
Taking $p=n+2$ and applying Lemma \ref{LT} to \eqref{V*}, we have
\begin{equation}\label{d2v_est}
\begin{split}
\int_0^T\int_\Omega |D^2V|^{n+2}
\leq& c_1\int_0^T
\Big(
\|\nabla V\cdot\nabla U\|_{L^{n+2}}^{n+2}
+\|V\Delta U\|_{L^{n+2}}^{n+2}\Big)dt\\
&+ c_1\int_0^T
\Big(\|A_2V+V^{n+1}+V^{n+2}\|_{L^{n+2}}^{n+2}
\Big)\,dt .
\end{split}
\end{equation}
We now estimate the two terms inside the first integral for each $t\in(0,T)$. By Young's inequality and the Gagliardo--Nirenberg inequality used in \eqref{gn1}, we have 
\begin{equation}\label{grad_est}
\begin{split}
c_1\|\nabla V\cdot\nabla U\|_{L^{n+2}}^{n+2}
&=c_1\int_\Omega |\nabla V|^{n+2}|\nabla U|^{n+2} \\
&\leq \frac{1}{2C_N\|V(\cdot,t)\|_{L^\infty}^{n+2}}\|\nabla V(\cdot,t)\|_{L^{2n+4}}^{2n+4}+c_2\|V(\cdot,t)\|_{L^\infty}^{n+2}\|\nabla U(\cdot,t)\|_{L^{2n+4}}^{2n+4} \\
&\leq \frac12\|D^2V(\cdot,t)\|_{L^{n+2}}^{n+2}+\frac12\|V(\cdot,t)\|_{L^\infty}^{n+2}+c_2\|V(\cdot,t)\|_{L^\infty}^{n+2}\|\nabla U(\cdot,t)\|_{L^{2n+4}}^{2n+4}.
\end{split}
\end{equation}
For the second integrand, one has
\begin{equation}\label{lap_est}
\|V\Delta U\|_{L^{n+2}}^{n+2} \leq \|V(\cdot,t)\|_{L^\infty}^{n+2}\|\Delta U(\cdot,t)\|_{L^{n+2}}^{n+2}.
\end{equation}
Substituting \eqref{grad_est} and \eqref{lap_est} into \eqref{d2v_est}, and using the facts  $|\Delta U|\leq \sqrt n\, |D^2U|,$  \eqref{nu2p}, \eqref{0424-1} and \eqref{vi0}, we obtain
\begin{equation}\label{0424d2v}
\begin{split}
\int_0^T\int_\Omega |D^2V|^{n+2}
&\leq c_3\sup_{t\in(0,T)}\|V(\cdot,t)\|_{L^\infty}^{n+2} \int_0^T\int_\Omega \left(|\nabla U|^{2n+4}+|\Delta U|^{n+2}\right) \\
&\quad +c_3\sup_{t\in(0,T)}\big(\|V(\cdot,t)\|_{L^\infty}^{n+2}+\|V(\cdot,t)\|_{L^\infty}^{(n+1)(n+2)}+\|V(\cdot,t)\|_{L^\infty}^{(n+2)^2}\big)\\
&\leq c_4\big(r_*^{5(n+2)^5}+1\big).
\end{split}
\end{equation}
Next, applying the Gagliardo-Nirenberg inequality to $V$ and using \eqref{vi0} with \eqref{0424d2v}, we have 
\begin{equation*}
\begin{split}
\int_0^T\|\nabla V(\cdot,t)\|_{L^{2n+4}}^{2n+4} 
&\leq c_5\int_0^T \left(\|V(\cdot,t)\|_{L^\infty}^{n+2}\|D^2V(\cdot,t)\|_{L^{n+2}}^{n+2}+\|V(\cdot,t)\|_{L^\infty}^{2n+4}\right)\\
&\leq c_6\big(r_*^{10(n+2)^5}+1\big),
\end{split}
\end{equation*}
which gives \eqref{0424*1}. Hence we complete the proof of Lemma \ref{NV2n+2}.
\end{proof}

\begin{lemma}\label{W21p_global}
Let conditions in Lemma \ref{l2p} hold. Then there exists a constant $M\geq 1$ independent of $\varepsilon$ and $\sigma$ but increasing in $r_*$ such that
\begin{equation}\label{c2a}
    \|(U,V,W)\|_{C^{2+\alpha, 1+\frac{\alpha}{2}}(\overline{Q}_T)} \leq M,
\end{equation}
for some constant $\alpha\in (0,1)$.

\end{lemma}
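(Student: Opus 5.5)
The bound \eqref{c2a} will come from a bootstrap. We start from the uniform $L^\infty$ and $W^{2,1}_{n+2}$-type estimates of Lemmas \ref{l2p}--\ref{NV2n+2}. These are upgraded to H\"older bounds through the parabolic embedding $W^{2,1}_{p}(Q_T)\hookrightarrow C^{1+\alpha,\frac{1+\alpha}{2}}(\overline{Q}_T)$, valid for $p>n+2$. The final step uses the Schauder part of Lemma \ref{LT}. Every constant produced will depend only on the bounds already obtained, which are polynomial in $r_*$ and independent of $\sigma$ and $\varepsilon$. Hence $M$ may be taken independent of $\sigma,\varepsilon$, and nondecreasing in $r_*$ after replacing it by $\max\{1,\cdot\}$ of a polynomial in $r_*$.

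\textbf{Step 1: the $W$-equation.} Its right-hand side $\lambda(U+V)W+r$ is bounded in $L^\infty(Q_T)$ by \eqref{UVW}, \eqref{ui0} and \eqref{vi0}. Applying \eqref{LT1} (with $a_1=d$, $a_2=\mu$) for a large $p$, say $p=2(n+2)$, gives $\|W\|_{W^{2,1}_p(Q_T)}\le c(r_*)$. The embedding then gives $\|\nabla W\|_{C^{\alpha,\frac\alpha2}(\overline{Q}_T)}\le c(r_*)$, and in particular $\nabla W\in L^\infty$.

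\textbf{Step 2: the $U$-equation.} We write it as in \eqref{U*}, but expand $F_1=-\chi_1\nabla U\cdot\nabla W-\chi_1U\Delta W+A_1\sigma U+\sigma\varepsilon U^{n+1}(1-U)$. We have $U\in L^\infty$, $\nabla W\in L^\infty$ and $\Delta W\in L^p(Q_T)$, but $\nabla U$ is so far controlled only in $L^{2n+4}(Q_T)$ by \eqref{0424*}. Therefore $F_1\in L^{2n+4}(Q_T)$, and \eqref{LT1} gives $U\in W^{2,1}_{2n+4}(Q_T)$. Since $2n+4>n+2$, we get $\nabla U\in C^{\alpha,\frac\alpha2}$.

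\textbf{Step 3: the $V$-equation.} The same argument applies to \eqref{V*}. $F_2$ involves $\nabla V\cdot\nabla U$, which lies in $L^{2n+4}$ by \eqref{0424*1} together with $\nabla U\in L^\infty$, and $V\Delta U$, which lies in $L^{2n+4}$ by Step 2. This gives $V\in W^{2,1}_{2n+4}$ and hence $\nabla V\in C^{\alpha,\frac\alpha2}$.

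\textbf{Step 4: Schauder upgrade.} To apply the Schauder estimate of Lemma \ref{LT}, the right-hand sides must be H\"older continuous, which requires $\Delta W$ and $\Delta U$ to be H\"older. We do this in the order $W\to U\to V$. First, $\lambda(U+V)W+r\in C^{\alpha,\frac\alpha2}$ by Steps 1--3 and (H), since $U,V,W$ are themselves H\"older through the $W^{2,1}_p$ embeddings. Lemma \ref{LT} then gives $W\in C^{2+\alpha,1+\frac\alpha2}$. Next, $\Delta W$ is H\"older, so $F_1$ is H\"older and $U\in C^{2+\alpha,1+\frac\alpha2}$. Finally, $\Delta U$ is H\"older, so $F_2$ is H\"older and $V\in C^{2+\alpha,1+\frac\alpha2}$. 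Throughout, we shrink $\alpha$ as needed to a common value no larger than $\alpha_0$.

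\textbf{Main obstacle.} This is Step 2: closing the $W^{2,1}_p$ estimate for $U$ with $p>n+2$ when the gradient term $\nabla U\cdot\nabla W$ appears on the right-hand side. Using Step 1 first, so that $\nabla W\in L^\infty$, makes that term lower order, with the $L^{2n+4}$ gradient bound \eqref{0424*} supplying the needed integrability. If an $\varepsilon$-absorption is required instead, I would use the Gagliardo--Nirenberg interpolation in the spirit of \eqref{gn1}. A secondary check is that $A_1$, $A_2$ in \eqref{lu}, \eqref{lv} depend on $\|\Delta W\|_{L^\infty}$ and $\|\Delta U\|_{L^\infty}$. Since \eqref{periodic3} is written with $A_i(1-\sigma)$ on the right-hand side, I would instead apply Lemma \ref{LT} with a fixed coefficient, e.g. $a_2=1$, moving $-A_i(1-\sigma)U$ to the source. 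This term is then bounded by a quantity already estimated, which avoids any circularity.
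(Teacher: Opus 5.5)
Your Steps 1--4 follow the paper's proof essentially step for step. The paper likewise proves a $W^{2,1}_{\tilde p}$ bound for $W$ giving $\nabla W\in L^\infty(Q_T)$, then $W^{2,1}_{2n+4}$ bounds for $U$ (via \eqref{0424*}) and for $V$ (via \eqref{0424*1} and $\nabla U\in L^\infty$), and finally applies Schauder estimates in the order $W\to U\to V$ with a common exponent $\alpha\le\alpha_0$.

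One caveat concerns your side remark, a point the paper itself passes over. Moving $-A_i(1-\sigma)U$ into the source does not remove the dependence on $A_i$: its $L^p$ norm scales with $A_i$, and $A_i$ is tied to $\|\Delta W\|_{L^\infty}$ (resp.\ $\|\Delta U\|_{L^\infty}$), which is exactly what is being proved. The clean fix is to keep that term on the left as damping, with $a_2=1+A_i(1-\sigma)$, and to note that the constants in Lemma \ref{LT} can be taken uniform for $a_2\ge 1$.
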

\begin{proof}
In view of \eqref{ui0}, \eqref{vi0}, \eqref{UVW} and (H), applying \eqref{LT1} to the third equation of \eqref{periodic3}, we obtain
\begin{equation}\label{c2a-1}
\begin{split}
\| W\|_{W^{2,1}_{\tilde{p}}(Q_T)}
&\leq c_1\|-\lambda( U+ V) W+r(x,t)\|_{L^{\tilde{p}}(Q_T)}\\
&\leq c_1r_*\|\lambda (U+V)/\mu+1\|_{L^{\tilde{p}}(Q_T)}\\
&\leq c_2r_*(r_*^{2n+6}+r_*^{5(n+2)^3}+1)\\
&\leq c_3(r_*^{6(n+2)^3}+1),  \ \ \forall \tilde{p}>n+2.
\end{split}
\end{equation}
By the Sobolev embedding theorem, $W^{2,1}_{\tilde{p}}(Q_T) \hookrightarrow C^{1+\alpha_1, \frac{1+\alpha_1}{2}}(\overline{Q}_T)$ with $0 < \alpha_1 < 1 - \frac{n+2}{\tilde{p}}$, we obtain
\begin{equation}\label{c2a-2}
    \|\nabla  W\|_{L^\infty(Q_T)} \leq \| W\|_{ C^{1+\alpha_1, \frac{1+\alpha_1}{2}}(\overline{Q}_T)}\leq c_4(r_*^{6(n+2)^3}+1).
\end{equation}
Applying Lemma \ref{LT} to \eqref{U*},  using  \eqref{ui0}, \eqref{c2a-1}, \eqref{c2a-2} and \eqref{0424*}, for $\tilde{p}=2n+4$, one has
\begin{equation*}
\begin{split}
\| U\|_{W^{2,1}_{\tilde{p}}(Q_T)}
&\leq c_5\|f\|_{L^{\tilde{p}}(Q_T)}\\
&=c_5\|-\chi_1\nabla \cdot(  U\nabla W)+A_1\sigma U+ \sigma\varepsilon  U^{n+1}(1- U)\|_{L^{\tilde{p}}(Q_T)}\\
&\leq c_6 (r_*^{2n+6}+1)^{n+2}+c_6(r_*^{2n+6}+1)\|\Delta W\|_{L^{\tilde{p}}}+c_6\|\nabla W\|_{L^\infty(Q_T)}\|\nabla U\|_{L^{\tilde{p}}(Q_T)}\\
&\leq c_7(r_*^{8(n+2)^3}+1).
\end{split}
\end{equation*}
Similarly, we further obtain 
\begin{equation}\label{c2a-3}
 \|\nabla  U\|_{L^\infty(Q_T)}\leq \| U\|_{ C^{1+\alpha_2, \frac{1+\alpha_2}{2}}(\overline{Q}_T)} \leq c_8 (r_*^{8(n+2)^3}+1),
\end{equation}
and
\begin{equation*}
\begin{split}
\| V\|_{W^{2,1}_{\tilde{p}}(Q_T)} 
&\leq \|-\chi_2\nabla \cdot(  V\nabla U)+A_2\sigma V+ \sigma\varepsilon  V^{n+1}(1- V)\|_{L^{\tilde{p}}(Q_T)}\\
&\leq c_9 \sup\limits_{t\in(0,T)}\|V\|_{L^\infty}^{n+2}+c_9\sup\limits_{t\in(0,T)}\|V\|_{L^\infty}\|\Delta U\|_{L^{\tilde{p}}(Q_T)}+c_9\|\nabla U\|_{L^\infty(Q_T)}\|\nabla V\|_{L^{\tilde{p}}(Q_T)}\\
&\leq c_{10} (r_*^{13(n+2)^4}+1),
\end{split}
\end{equation*}
which yields
\begin{equation}\label{c2a-4}
\|\nabla  V\|_{L^\infty(Q_T)}\leq \| V\|_{ C^{1+\alpha_3, \frac{1+\alpha_3}{2}}(\overline{Q}_T)} \leq c_{11} (r_*^{13(n+2)^4}+1).
\end{equation}
In the sequel, we denote by $c_i(r_*)$ generic constants which are increasing in $r_*$ and satisfy $c_i(r_*)\geq 1$. Thanks to \eqref{c2a-2}-\eqref{c2a-4}, and the assumption $\operatorname{(H)}$, we apply the Schauder estimate (see Lemma \ref{LT}) to the $W$-equation in \eqref{periodic3} and get
\begin{equation}\label{c2a-w}
\|W\|_{C^{2+\alpha_4, 1+\frac{\alpha_4}{2}}(\overline{Q}_T)}\leq c_{12}(r_*),
\end{equation}
where $\alpha_4:=\min\{\alpha_0,\alpha_1,\alpha_2,\alpha_3\}$. Now we rewrite the $U$-equation in \eqref{periodic3} as
\begin{equation}\label{U*H}
\begin{cases}
 U_t-\Delta  U+A_1U =f( x, t), &x\in\Omega, t>0,\\
{\nabla U}\cdot\nu=0, &x\in\partial\Omega, t>0, \\
 U(x,t)= U(x,t+T), &x\in\Omega, t\geq0,
\end{cases}
\end{equation}
where $$f( x, t):=-\chi_1\nabla U\cdot \nabla W-\chi_1 U\Delta W+A_1\sigma U+ \sigma\varepsilon  U^{n+1}(1- U).$$  
Due to \eqref{c2a-3} and \eqref{c2a-w}, it holds that
$$\|f(x,t)\|_{C^{\alpha_4, \frac{\alpha_4}{2}}(\overline{Q}_T)}\leq c_{13}(r_*).$$
Then applying Lemma \ref{LT} to \eqref{U*H} gives
\begin{equation}\label{c2a-u}
\|U\|_{C^{2+\alpha_4, 1+\frac{\alpha_4}{2}}(\overline{Q}_T)}\leq c_{14}(r_*).
\end{equation}
Similarly, by \eqref{c2a-4} and \eqref{c2a-u}, we apply Lemma \ref{LT} to the $V$-equation in \eqref{periodic3} and obtain 
\begin{equation}\label{c2a-v}
\|V\|_{C^{2+\alpha_4, 1+\frac{\alpha_4}{2}}(\overline{Q}_T)}\leq c_{15}(r_*).
\end{equation}
Then \eqref{c2a} follows from \eqref{c2a-w}, \eqref{c2a-u} and \eqref{c2a-v}.
\end{proof}

\vspace{2.5mm}
In what follows, using the topological degree method, we establish the existence of non-constant positive $T$-periodic solution $(U,V,W)$ to \eqref{periodic2}. To this end, we consider the following problem:
\begin{equation}\label{linearized_prob}
\begin{cases}
 U_t - \Delta  U + \chi_1 \nabla \cdot ( U \nabla  W) + A_1  U =  A_1\widetilde u_+ + \varepsilon \widetilde u_+^{n+1}(1- U)+\varepsilon\eta,  &x\in\Omega, \ t>0, \\
 V_t - \Delta  V + \chi_2 \nabla \cdot ( V \nabla  U) + A_2  V =  A_2\widetilde v_+ + \varepsilon \widetilde v_+^{{n+1}}(1- V)+\varepsilon\eta,  &x\in\Omega, \ t>0, \\
\nabla{ U}\cdot{\nu}=\nabla{V}\cdot{\nu}=0, &x\in\partial\Omega, \ t>0, \\
 U(x,t)= U(x,t+T), \  V(x,t)= V(x,t+T), &x\in\Omega,t\geq 0,
\end{cases}
\end{equation}
where $\eta \in [0, 1]$, $(\widetilde{u},\widetilde{v})\in \mathcal{X}$ are any given functions, $W\geq 0$ is the unique solution satisfying \eqref{lw}. Analogous to \eqref{lu} and \eqref{lv}, the system \eqref{linearized_prob} admits a unique nonnegative $T$-periodic solution $(U, V)\in\mathcal{X}$. 

Therefore, for any $(\widetilde{u}, \widetilde{v}) \in \mathcal{X}$ and $\eta \in [0, 1]$, we can define a new operator $\mathcal{H} : \mathcal{X} \times [0, 1] \to \mathcal{X}$ by 
$$\mathcal{H}((\widetilde{u}, \widetilde{v}), \eta) = ( U,  V).$$

Then we have the following results.
\begin{lemma}\label{uv>0}
Let $\operatorname{(H)}$ hold and $(U,V,W)$ be the solution of $\mathcal{H}(( U,  V), \eta)=(U,V)$. Then it holds that
\begin{itemize}
\item[(i)] For any $\eta\in [0,1]$, the solution $(U,V,W)$ is nonnegative. Moreover,  there exists a positive constant $M_0\geq1$ increasing in $r_*$ but independent of $\varepsilon$ and $\eta$ such that
\begin{equation*}
\|(U,V,W)\|_{C^{2+\alpha, 1+\frac{\alpha}{2}}(\overline{Q}_T)} \leq M_0.
\end{equation*}
\item[(ii)] For any $\eta\in(0,1]$, the solution $(U,V)$ satisfies
\begin{equation}\label{1/2}
\| U\|_{C^{\alpha,\frac{\alpha}{2}}(\overline{Q}_T)}>\frac{1}{2},\  \ \text{and}\ \  \| V\|_{C^{\alpha,\frac{\alpha}{2}}(\overline{Q}_T)}>\frac{1}{2};
\end{equation}
\item [(iii)] If $\eta=0$, $\| U\|_{C^{\alpha,\frac{\alpha}{2}}(\overline{Q}_T)}\leq \frac{1}{2}$ $(\text{resp.}\  \| V\|_{C^{\alpha,\frac{\alpha}{2}}(\overline{Q}_T)}\leq \frac{1}{2})$ holds if and only if $U\equiv 0$ $(\text{resp.}\ V\equiv0)$.
\end{itemize}
\end{lemma}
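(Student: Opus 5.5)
Part (i) follows by rerunning the a priori estimates of Lemmas \ref{l2p}--\ref{W21p_global} for the fixed-point system of $\mathcal{H}$. Nonnegativity comes first. For a fixed point, the right-hand sides $A_1U_++\varepsilon U_+^{n+1}(1-U)+\varepsilon\eta$ are nonnegative wherever $U\le 0$. Since $A_1>\|\chi_1\Delta W\|_{L^\infty}+1$, the maximum principle for the periodic problem gives $U\ge0$, and the same argument gives $V\ge 0$; nonnegativity of $W$ follows from \eqref{wbd}. Then $U_+=U$, and the fixed-point equation becomes
\begin{equation*}
U_t=\Delta U-\chi_1\nabla\cdot(U\nabla W)+\varepsilon U^{n+1}(1-U)+\varepsilon\eta ,
\end{equation*}
and analogously for $V$. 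This is exactly \eqref{periodic3} with $\sigma=1$, plus the extra bounded source $\varepsilon\eta\in[0,1]$. In Lemma \ref{l2p}, integrating over $Q_T$ now gives $\varepsilon\int\!\!\int U^{n+2}\le \varepsilon\int\!\!\int U^{n+1}+\varepsilon\eta|\Omega|T$, and dividing by $\varepsilon$ yields an $\varepsilon$-uniform bound on $\int\!\!\int U^{n+2}$. The $L^1$-in-space bound then follows from Lemma \ref{BDSS} as before. In Lemmas \ref{u-infty}, \ref{Nu2p} and \ref{W21p_global}, the term $\varepsilon\eta$ contributes at most $q\int U^{q-1}\le q\int U^q+q|\Omega|$, which is absorbed exactly as the term $q\int U^q$ was; its gradient vanishes, so it does not enter $J_3$. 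Hence the chain of estimates goes through with constants independent of $\varepsilon,\eta$ and increasing in $r_*$, giving $M_0$.

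For (ii), I argue by contradiction. Fix $\eta>0$ and suppose $\|U\|_{C^{\alpha,\alpha/2}}\le\frac12$, so in particular $0\le U\le\frac12$. Integrating the $U$-equation over $Q_T$ and using periodicity gives
\begin{equation*}
0=\varepsilon\int_0^T\!\!\int_\Omega U^{n+1}(1-U)+\varepsilon\eta|\Omega|T .
\end{equation*}
Since $1-U\ge\frac12$, the right-hand side is at least $\varepsilon\eta|\Omega|T>0$, a contradiction. The same argument applies to $V$. The key point is that integrating kills the diffusion and taxis terms, leaving the nonnegative reaction terms plus the strictly positive source.

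For (iii), set $\eta=0$. If $U\equiv0$, the norm is $0\le\frac12$. Conversely, if $\|U\|_{C^{\alpha,\alpha/2}}\le\frac12$, the same integration gives
\begin{equation*}
\int_0^T\!\!\int_\Omega U^{n+1}(1-U)=0 ,
\end{equation*}
with a nonnegative integrand since $U\le\frac12$. Therefore $U^{n+1}(1-U)\equiv0$, and as $1-U\ge\frac12$ this forces $U\equiv0$. The same holds for $V$. The only genuinely delicate point is (i): I must check that the extra source $\varepsilon\eta$ and the positive-part truncation do not spoil the uniformity in $\varepsilon$ of the first estimate in Lemma \ref{l2p}. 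The division by $\varepsilon$ described above handles this.
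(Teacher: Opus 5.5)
Your proposal is correct and follows the paper's proof. Parts (ii) and (iii) are exactly the paper's contradiction argument by integrating over $Q_T$. For (i) the paper only says the estimates of Lemmas \ref{l2p}--\ref{W21p_global} carry over, which is what you sketch in more detail. One small precision: the extra term $q|\Omega|$ produced by the source $\varepsilon\eta$ is an additive constant, not something absorbed the way $q\int_\Omega U^q$ is. In the Moser step you should therefore iterate with $\max\{M_{j-1},1\}$, which is standard and harmless.
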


\begin{proof}
By the same arguments as in the proof of Lemma \ref{W21p_global}, we can prove the statement (i) directly. The details of proof are omitted for convenience.

Next, we shall prove the statement (ii) by contradiction. Suppose that the statement (ii) does not hold, then for some $\eta\in (0,1]$, there exists a solution $( U, V)$ satisfying
\begin{equation}\label{1/2-1}
\| U\|_{C^{\alpha,\frac{\alpha}{2}}(\overline{Q}_T)} \leq \frac{1}{2}\ \ \ \text{or}\ \ \ \| V\|_{C^{\alpha,\frac{\alpha}{2}}(\overline{Q}_T)}\leq \frac{1}{2}.
\end{equation}
If $\| U\|_{C^{\alpha,\frac{\alpha}{2}}(\overline{Q}_T)} \leq \frac{1}{2}$, we integrate $U$-equation of $\mathcal{H}(( U,  V), \eta)=(U,V)$, then apply \eqref{1/2-1} and the non-negativity of $ U$ to get 
\begin{equation*}
0\leq \frac{1}{2}\int_0^T\int_\Omega  U^{n+1} \leq \int_0^T\int_\Omega  U^{n+1}(1- U)=-\eta |\Omega|T<0,
\end{equation*}
which is a contradiction. Similarly,  if $\| V\|_{C^{\alpha,\frac{\alpha}{2}}(\overline{Q}_T)}\leq \frac{1}{2}$, we derive 
\begin{equation*}
0\leq \frac{1}{2}\int_0^T\int_\Omega  V^{n+1} \leq \int_0^T\int_\Omega  V^{n+1}(1- V)=-\eta |\Omega|T<0,
\end{equation*}
which  yields a contradiction again. Thus, \eqref{1/2} follows.

When $\eta=0$, if $\| U\|_{C^{\alpha,\frac{\alpha}{2}}(\overline{Q}_T)}\leq \frac{1}{2} ~(\text{resp.}~\| V\|_{C^{\alpha,\frac{\alpha}{2}}(\overline{Q}_T)}\leq \frac{1}{2})$, we similarly derive
$$
\int_0^T\int_\Omega  U^{n+1} \leq 0  \quad \bigg(\text{resp.} \quad \int_0^T\int_\Omega  V^{n+1} \leq 0\bigg), 
$$
which along with the facts $U\geq 0\ (\text{resp.}\ V\geq0)$ yields $U\equiv0\ (\text{resp.}\ V\equiv0)$. The converse is trivial. This proves the statement (iii). Thus, we complete the proof of Lemma \ref{uv>0}.
\end{proof}

\vspace{1.5mm}
With the preceding results, we now establish the existence of nonnegative  $T$-periodic solutions for the system \eqref{periodic2}.
\begin{proposition}\label{e-p2}
Let the conditions in Theorem \ref{GBSp} hold. Then the system \eqref{periodic2} admits at least one nonnegative $T$-periodic solution $( U_\varepsilon,  V_\varepsilon,  W_\varepsilon)$ satisfying
\begin{equation}\label{limit1}
\|(U_\varepsilon, V_\varepsilon, W_\varepsilon)(\cdot,t)\|_{C^{2+\alpha,1+\frac{\alpha}{2}}(\overline{\Omega}\times[0,\infty))}\le M,      
\end{equation}
where the constant $M>0$ is independent of $\varepsilon$ and increases in $r_*$.
\end{proposition}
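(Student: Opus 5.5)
The plan is a Leray--Schauder degree argument on the open set
\[
\mathcal{O}:=\Big\{(\widetilde u,\widetilde v)\in\mathcal{X}:\ \tfrac12<\|\widetilde u\|_{C^{\alpha,\frac{\alpha}{2}}(\overline{Q}_T)},\ \tfrac12<\|\widetilde v\|_{C^{\alpha,\frac{\alpha}{2}}(\overline{Q}_T)},\ \|(\widetilde u,\widetilde v)\|_{C^{\alpha,\frac{\alpha}{2}}(\overline{Q}_T)}<M_0+1\Big\},
\]
where $M_0$ is the constant from Lemma \ref{uv>0}(i). We will show that $\deg(I-\mathcal{F}(\cdot,1),\mathcal{O},0)\neq0$. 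A fixed point of $\mathcal{F}(\cdot,1)$ is exactly a $T$-periodic solution of \eqref{periodic2}. Indeed, at a fixed point $U=\widetilde u_+$ (recall $U\ge0$), so the $A_1$-terms cancel and \eqref{lu} with $\sigma=1$ reduces to the $U$-equation of \eqref{periodic2}; the same holds for $V$. Nonnegativity comes from \eqref{UCP}--\eqref{VCP}, and the bound \eqref{limit1} from Lemma \ref{W21p_global}, whose $M$ is independent of $\varepsilon,\sigma$ and increasing in $r_*$. Periodicity extends the bound from $\overline{Q}_T$ to $\overline{\Omega}\times[0,\infty)$.

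\textbf{Step 1 (homotopy in $\eta$).} We observe that $\mathcal{H}(\cdot,0)=\mathcal{F}(\cdot,1)$, since \eqref{linearized_prob} with $\eta=0$ coincides with \eqref{lu}--\eqref{lv} at $\sigma=1$. The map $\mathcal{H}$ is compact and continuous, by the same Schauder-embedding argument as in Lemma \ref{C12}. We claim that $\mathcal{H}(\cdot,\eta)$ has no fixed point on $\partial\mathcal{O}$ for any $\eta\in[0,1]$.
\begin{itemize}
\item The outer boundary is excluded by Lemma \ref{uv>0}(i), since fixed points satisfy $\|(U,V)\|\le M_0<M_0+1$.
\item For $\eta>0$, the inner boundaries are excluded by Lemma \ref{uv>0}(ii).
\item For $\eta=0$, a fixed point with $\|U\|_{C^{\alpha,\frac{\alpha}{2}}}=\tfrac12$ would force $U\equiv0$ by Lemma \ref{uv>0}(iii), contradicting $\|U\|=\tfrac12$. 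The same argument applies to $V$.
\end{itemize}
Homotopy invariance then gives $\deg(I-\mathcal{F}(\cdot,1),\mathcal{O},0)=\deg(I-\mathcal{H}(\cdot,1),\mathcal{O},0)$.

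\textbf{Step 2 ($\eta=1$ has no fixed points).} We would show that $\mathcal{H}(\cdot,1)$ has no fixed point in the whole ball $B:=\{\|(\widetilde u,\widetilde v)\|<M_0+1\}$, which forces $\deg(I-\mathcal{H}(\cdot,1),B,0)=0$. The point is that $B\setminus\overline{\mathcal{O}}$ contains the inner regions, so the degree on $\mathcal{O}$ cannot simply be read off from the degree on $B$; a more careful splitting is needed. The natural route is to compute degrees on the sets
\[
B,\qquad \{\|U\|<\tfrac12\},\qquad \{\|V\|<\tfrac12\},\qquad \{\|U\|,\|V\|<\tfrac12\},
\]
and combine them by additivity and excision.

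\textbf{Step 3 (degree near the trivial states).} On these smaller sets we use the $\sigma$-homotopy of $\mathcal{F}$ at $\eta=0$. Lemma \ref{uv>0}(iii) says that fixed points with small norm are trivial in that component. In particular $(0,0)$ is the only fixed point in the small bidisc, and there $\mathcal{F}(\cdot,\sigma)\to(0,0)$ as $\sigma\to0$ by Lemma \ref{C12}, which gives index $1$. The mixed regions, where one component is zero, correspond to the semi-trivial states $(U,0)$ and $(0,V)$. The expected identity is that the index of $\mathcal{O}$ equals $\deg(B)$ minus the contributions of $(0,0)$ and the semi-trivial pieces, which should come out nonzero.

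\textbf{Main obstacle.} I expect the hardest part to be the bookkeeping of the semi-trivial contributions in Step 3, which requires the degrees of the sub-systems where one species vanishes. I would compute these by the same trick applied to the single-equation problems: the fixed points there are either nontrivial or zero, and the single-species degree analogue of Steps 1--2 yields index $0$ for the nontrivial part. This leaves $\deg(I-\mathcal{F}(\cdot,1),\mathcal{O},0)=-1\cdot(\pm1)\neq0$.

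A simpler alternative I would try first is to prove directly that $\deg(I-\mathcal{H}(\cdot,0),\mathcal{O},0)=\deg(I-\mathcal{H}(\cdot,1),\mathcal{O},0)$. I would then show that $\mathcal{H}(\cdot,1)$ is homotopic, through $\sigma$-scaling of the source terms with $\eta=1$ kept fixed, to a map whose fixed points all lie in $\mathcal{O}$ (since $\|U\|,\|V\|>\tfrac12$ by the argument of (ii)), and which is eventually the constant map onto a fixed positive $T$-periodic pair, giving degree $1$. Any fixed point in $\mathcal{O}$ of $\mathcal{F}(\cdot,1)$ yields the desired solution.
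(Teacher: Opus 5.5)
\textbf{There is a genuine gap: your proposal never actually computes the degree, and Step 2 is false.}

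Your reduction is fine: fixed points of $\mathcal{F}(\cdot,1)$ solve \eqref{periodic2}, they are nonnegative, and the bound of Lemma \ref{W21p_global} extends by periodicity. Step 1 is also fine: the $\eta$-homotopy on $\mathcal{O}$ is admissible by Lemma \ref{uv>0}.

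Step 2 fails because Lemma \ref{uv>0}(ii) does not say that $\mathcal{H}(\cdot,1)$ has no fixed points in $B$. It says that all its fixed points lie in $\mathcal{O}$. In fact $\mathcal{H}(\cdot,1)$ has degree $1$ on $B$ (see below), so it does have fixed points there. Had Step 2 been true, Step 1 would give $\deg(\mathcal{I}-\mathcal{F}(\cdot,1),\mathcal{O},0)=0$, which proves nothing.

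Step 3 does not repair this. The $\sigma$-homotopy on the small bidisc is not known to be admissible. Lemma \ref{uv>0}(iii) is only available at $\sigma=1$. For $\sigma<1$, the integrated identity $A_1(1-\sigma)\iint U=\sigma\varepsilon\iint U^{n+1}(1-U)$ does not exclude small nontrivial fixed points. The indices of the degenerate fixed point $(0,0)$ and of the semi-trivial states are never established, and the final count ``$-1\cdot(\pm1)$'' is a guess.

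Your ``simpler alternative'' fails for a related reason. Once $\sigma<1$, the absorption term $A_1(1-\sigma)U$ enters the integrated identity and destroys the sign argument behind (ii). Nothing then keeps fixed points away from the inner boundary of $\mathcal{O}$ along that homotopy.

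\textbf{The missing ingredient is the $\sigma$-homotopy on the big ball.} By Lemma \ref{W21p_global}, every fixed point of $\mathcal{F}(\cdot,\sigma)$, $\sigma\in[0,1]$, has norm at most $M$. So take the radius $\max\{M,M_0\}+1$, not $M_0+1$. Lemma \ref{C12} then gives $\deg(\mathcal{I}-\mathcal{F}(\cdot,1),B,0)=\deg(\mathcal{I},B,0)=1$.

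The paper runs your $\eta$-homotopy not on $\mathcal{O}$ but on the complementary set $D_{1/2}=\{\min(\|U\|,\|V\|)<\frac12\}\cap B$. There it is admissible, and $\mathcal{H}(\cdot,1)$ has no fixed points in $D_{1/2}$, so $\deg(\mathcal{I}-\mathcal{F}(\cdot,1),D_{1/2},0)=0$. Excision then gives degree $1$ on $B\setminus\overline{D}_{1/2}=\mathcal{O}$, with no index computations for trivial or semi-trivial states.

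You can also finish inside your own framework. After Step 1, use (ii) and excision to replace $\mathcal{O}$ by $B$ at $\eta=1$. Then homotope back in $\eta$ on $B$ (admissible by (i)) and in $\sigma$ (admissible by Lemma \ref{W21p_global}). This gives $\deg(\mathcal{I}-\mathcal{F}(\cdot,1),\mathcal{O},0)=1$.
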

\begin{proof}  
    We employ the topological degree theory to prove the existence of positive $T$-periodic solutions to \eqref{periodic2}. For any constant $R>0$, let $B_R$ denote the open ball in $\mathcal{X}$ defined by
\begin{equation*}
    B_R:=\big\{(U,V)\in \mathcal{X}\ \big|\ \|U\|_{C^{\alpha,\frac{\alpha}{2}}(\overline{Q}_T)} + \|V\|_{C^{\alpha,\frac{\alpha}{2}}(\overline{Q}_T)} < R\big\}.
\end{equation*}
Let $\widetilde{M}:= \max\{M, M_0\}\geq 1$. Moreover, for any $r>0$, we introduce an open subset $D_r\subset B_{\widetilde{M}+1}$ defined by
\begin{equation*}
    D_r:=\big\{(U,V)\in B_{\widetilde{M}+1}\ \big |\  \min\big\{\|U\|_{C^{\alpha,\frac{\alpha}{2}}(\overline{Q}_T)},\|V\|_{C^{\alpha,\frac{\alpha}{2}}(\overline{Q}_T)}\big\}<r\big\}.
\end{equation*}
Then from  Lemma \ref{W21p_global}, we get that any possible solution to $\mathcal{F}(( U,  V), \sigma)= ( U,  V)$ satisfies
\begin{equation*}
\|U\|_{C^{\alpha, \frac{\alpha}{2}}(\overline{Q}_T)}+\|V\|_{C^{\alpha, \frac{\alpha}{2}}(\overline{Q}_T)}\leq M  < \widetilde{M}+1,
\end{equation*}
which implies that there are no solutions on $\partial B_{\widetilde{M}+1}$. Thus, 
\begin{equation}\label{boundary_cond}
    0 \notin [\mathcal{I} - \mathcal{F}(\cdot, \sigma)](\partial B_{\widetilde{M}+1}), \quad \forall \sigma \in [0, 1],
\end{equation}
where $\mathcal{I}$ represents the identity operator. Then \eqref{boundary_cond} ensures that the topological degree $\deg(\mathcal{I} - \mathcal{F}(\cdot, \sigma), B_{\widetilde{M}+1}, 0)$ is well-defined. Using the homotopy invariance of the topological degree, and recalling $\mathcal{F}(\cdot, 0) = (0,0)$ (see Lemma \ref{C12}), we derive 
\begin{equation}\label{deg_large}
    \deg(\mathcal{I} - \mathcal{F}(\cdot, 1), B_{\widetilde{M}+1}, 0) = \deg(\mathcal{I} - \mathcal{F}(\cdot, 0), B_{\widetilde{M}+1}, 0) = \deg(\mathcal{I}, B_{\widetilde{M}+1}, 0) = 1.
\end{equation}
From Lemma \ref{uv>0}, we know the equation $\mathcal{H}(( U,  V), \eta) = ( U,  V)$ admits no solution on $\partial D_{1/2}$ and hence
\begin{equation}\label{boundary_H}
    0 \notin [\mathcal{I} - \mathcal{H}(\cdot, \eta)](\partial D_{1/2}), \quad \forall \eta \in [0, 1].
\end{equation}
Noting the fact $\mathcal{H}(\cdot, 0) = \mathcal{F}(\cdot, 1)$, it follows from \eqref{boundary_H} that
\begin{equation*}
    0 \notin [\mathcal{I} - \mathcal{F}(\cdot, 1)](\partial D_{1/2}).
\end{equation*}
Consequently, $\deg(\mathcal{I} - \mathcal{H}(\cdot, \eta), D_{1/2}, 0)$ and $\deg(\mathcal{I} - \mathcal{F}(\cdot, 1), D_{1/2}, 0)$ are well-defined. In view of \eqref{1/2}, the equation $\mathcal{H}(( U,  V), 1) = ( U,  V)$ admits no solution in $D_{1/2}$, which implies
\begin{equation*}
    \deg(\mathcal{I} - \mathcal{H}(\cdot, 1), D_{1/2}, 0) = 0.
\end{equation*}
By the homotopy invariance and the fact that $\mathcal{H}(\cdot, 0) = \mathcal{F}(\cdot, 1)$, we obtain
\begin{equation}\label{deg_small}
    \deg(\mathcal{I} - \mathcal{F}(\cdot, 1), D_{1/2}, 0) = \deg(\mathcal{I} - \mathcal{H}(\cdot, 0), D_{1/2}, 0) = \deg(\mathcal{I} - \mathcal{H}(\cdot, 1), D_{1/2}, 0) = 0.
\end{equation}
Using the excision property of the topological degree together with \eqref{deg_large} and \eqref{deg_small} gives
\begin{equation*}
    \deg(\mathcal{I} - \mathcal{F}(\cdot, 1), B_{\widetilde{M}+1} \setminus \overline{D}_{1/2}, 0) = \deg(\mathcal{I} - \mathcal{F}(\cdot, 1), B_{\widetilde{M}+1}, 0) - \deg(\mathcal{I} - \mathcal{F}(\cdot, 1), D_{1/2}, 0) = 1.
\end{equation*}
Therefore, in view of \eqref{UVW}, for any $\varepsilon \in (0, 1)$, the equation $$\mathcal{F}(( U,  V), 1) = ( U,  V),$$
which is equivalent to the system \eqref{periodic2}, admits at least one nonnegative solution $( U_\varepsilon,  V_\varepsilon,  W_\varepsilon)$ in $B_{\widetilde{M}+1} \setminus \overline{D}_{1/2}$ such that
\begin{equation}\label{limit2}
    \frac{1}{2} \leq \min\big\{\|U_\varepsilon\|_{C^{\alpha,\frac{\alpha}{2}}(\overline{Q}_T)},\|V_\varepsilon\|_{C^{\alpha,\frac{\alpha}{2}}(\overline{Q}_T)}\big\} \quad \text{and} \quad \|U_\varepsilon\|_{C^{\alpha,\frac{\alpha}{2}}(\overline{Q}_T)}+\|V_\varepsilon\|_{C^{\alpha,\frac{\alpha}{2}}(\overline{Q}_T)} \leq \widetilde{M},
\end{equation}
which yields $U_\varepsilon\not\equiv0 $ and $ V_\varepsilon\not\equiv0$. Furthermore, Lemma \ref{W21p_global} yields
\begin{equation*}
    \|(U_\varepsilon,V_\varepsilon,W_\varepsilon)\|_{C^{2+\alpha, 1+\frac{\alpha}{2}}(\overline{Q}_T)} \leq M,
\end{equation*}
where the constant $M\geq 1$ is independent of $\varepsilon$ and increasing in $r_*$. This completes the proof of Proposition \ref{e-p2}.
\end{proof}

Based on Proposition \ref{e-p2}, we prove the existence of non-constant positive $T$-periodic solutions for the system \eqref{periodic}.
\begin{proof}[{\bf Proof of Theorem \ref{GBSp}}]
We divide the proof into two steps.

{\bf Step 1: Existence and positivity.}  By Proposition \ref{e-p2} and the compact embedding $$C^{2+\alpha, 1+\frac{\alpha}{2}}(\overline{Q}_T)\hookrightarrow C^{2, 1}(\overline{Q}_T),$$
there exist a subsequence (still denoted by $\varepsilon$) and a limit nonnegative function $( U,  V,  W)\in C^{2,1}(\overline{Q}_T)$ such that as $\varepsilon \to 0$, one has
\allowdisplaybreaks
\begin{align*}
    ( U_\varepsilon,  V_\varepsilon,  W_\varepsilon) &\to ( U,  V,  W), \quad &&\text{uniformly in } \overline{Q}_T, \\
    (\nabla  U_\varepsilon, \nabla  V_\varepsilon, \nabla  W_\varepsilon) &\to (\nabla  U, \nabla  V, \nabla  W), \quad &&\text{uniformly in } \overline{Q}_T, \\
    (\Delta  U_\varepsilon, \Delta  V_\varepsilon, \Delta  W_\varepsilon) &\to (\Delta  U, \Delta  V, \Delta  W), \quad &&\text{uniformly in } \overline{Q}_T, \\
    (\partial_t  U_\varepsilon, \partial_t  V_\varepsilon, \partial_t  W_\varepsilon) &\to ( U_t,  V_t,  W_t), \quad &&\text{uniformly in } \overline{Q}_T.
\end{align*}
Therefore, passing to the limit in the approximate system shows that $(U,V,W)\in C^{2,1}(\overline{Q}_T)$ is a nonnegative classical $T$-periodic solution of \eqref{periodic}. Moreover, \eqref{limit1} and the lower semicontinuity of the H\"{o}lder norm imply
\begin{equation*}
    \|(U,V,W)\|_{C^{2+\alpha,1+\frac{\alpha}{2}}(\overline{Q}_T)}\leq M.
\end{equation*}
Furthermore, passing to the limit in \eqref{limit2}, we obtain
$$\min\left\{
\|U\|_{C^{\alpha,\frac{\alpha}{2}}(\overline{Q}_T)},
\|V\|_{C^{\alpha,\frac{\alpha}{2}}(\overline{Q}_T)}
\right\}\geq\frac12.$$
which implies $U\not\equiv0$ and $V\not \equiv 0$. Then by the strong maximum principle, we conclude $ U,  V,  W > 0$ in $\Omega \times [0, T]$.

{\bf Step 2: Spatial inhomogeneity.} Let $(U,V,W)$ be a positive $T$-periodic solution of \eqref{periodic}. We claim that $U, V, W$ are spatially inhomogeneous. To prove this, we integrate the first two equations of \eqref{periodic} over $\Omega$ to obtain
\begin{equation*}
    \frac{d}{dt}\int_\Omega U = 0 \quad \text{and} \quad \frac{d}{dt}\int_\Omega V = 0.
\end{equation*}
Hence, if $U$ is spatially homogeneous, it must be a positive constant. The same conclusion holds for $V$.

With this observation, suppose, for contradiction, that at least one component is spatially homogeneous, i.e., $\nabla U \equiv {\bf0}$, $\nabla V \equiv {\bf0}$, or $\nabla W \equiv {\bf0}$. We now consider the three cases:

\textbf{Case 1:} If $\nabla U \equiv {\bf0}$, then $U \equiv P_1$ for some constant $P_1 > 0$. The first equation of \eqref{periodic} reduces to 
$$ \Delta W = 0,$$
which, combined with $\nabla W\cdot\nu\big|_{\partial\Omega}=0$, gives 
$$\int_\Omega |\nabla W|^2=0.$$
This implies $W\equiv P(t)$ for some positive $T$-periodic solution. Substituting $U \equiv P_1$ into the second equation of \eqref{periodic} yields $V_t = \Delta V$. Multiplying this equality by $V$ and integrating the results by parts over $\Omega$, we obtain
\begin{equation*}
    \frac{1}{2}\frac{d}{dt}\int_\Omega V^2 = -\int_\Omega |\nabla V|^2\leq 0,
\end{equation*}
which, along with the fact $V(x, t) = V(x, t+T)$, gives
$$ \int_t^{t+T}\int_\Omega |\nabla V|^2=0,$$
and hence 
\begin{equation*}
    \frac{d}{dt}\int_\Omega V^2=\int_\Omega |\nabla V|^2 = 0.
\end{equation*}
This indicates that $V \equiv P_2$ for some constant $P_2 > 0$.

\textbf{Case 2:} If $\nabla V \equiv {\bf0}$, then $V \equiv P_2 > 0$. We get from the second equation of \eqref{periodic} that $\Delta U = 0$, leading to $U \equiv P_1 > 0$. This subsequently forces $\Delta W = 0$, and thus $W = P(t)$.

\textbf{Case 3:} If $\nabla W \equiv {\bf0}$, then $W = P(t)$. The first equation of \eqref{periodic} reduces to $U_t = \Delta U$. By the same energy estimate and $T$-periodicity used in Case 1, we obtain $\nabla U \equiv {\bf 0}$, hence $U \equiv P_1>0$. This in turn forces $V_t = \Delta V$, yielding $V \equiv P_2 > 0$.

In all cases, we obtain $(U,V,W) \equiv (P_1, P_2, P(t))$. Substituting this into the third equation of \eqref{periodic} yields
\begin{equation*}
    r(x,t) =  P'(t) + [\lambda(P_1 + P_2) + \mu]P(t).
\end{equation*}
This indicates that $r(x,t)$ is independent of $x$, which contradicts the hypothesis (H) that $r(x,t)$ is spatially heterogeneous. Thus, the components $U, V, W$ are spatially inhomogeneous.

{\bf Step 3: Temporal inhomogeneity.} Assume that $U(x,t)=U(x), V(x,t)=V(x)$ and $W(x,t)=W(x)$. Then the $W$-equation implies
$$ r(x,t)=-d\Delta W(x)+[\lambda(U(x)+V(x))+\mu]W(x),$$
which shows that $r(x,t)$ is independent of $t$. Hence, this contradicts the temporal inhomogeneity of $r(x,t)$ in the hypothesis (H). Thus, $(U, V, W)$ cannot be stationary.

Then, the combination of Step 1, Step 2 with Step 3 finishes the proof of Theorem \ref{GBSp}.
\end{proof}

\subsection{Uniqueness and global asymptotic stability}
In this subsection, we investigate the uniqueness and global asymptotic  stability  of positive non-constant $T$-periodic solution $( U,  V,  W)$  for the system \eqref{periodic} under some smallness assumptions on $r_*$. Without loss of generality, we assume $0<r_*\leq 1$ in the following. Then  from \eqref{ui0}, \eqref{vi0} and Lemma \ref{W21p_global}, there exists a constant $K_1>0$ independent of $r_*$ such that 
\begin{equation}\label{UWB}
\|U(\cdot,t)\|_{L^\infty}+\|V(\cdot,t)\|_{L^\infty}+\|\Delta  W(\cdot, t)\|_{L^\infty}\leq K_1.
\end{equation}
To prove the uniqueness and stability, we first establish an $r_*$-dependent upper bound for $\|\nabla U\|_{L^\infty}$.
\begin{lemma}\label{grad_u_decay}
Let $(U,V,W)$ be the positive non-constant $T$-periodic solution to \eqref{periodic} obtained in Theorem \ref{GBSp}. Then there exists a constant $C_{11} > 0$ independent of $r_*$ such that 
\begin{equation}\label{grad_u_rate}
\sup_{t \in (0, T)} \|\nabla  U(\cdot, t)\|_{L^\infty} \leq C_{11} r_*^{\frac{1}{n+2}}.
\end{equation}
\end{lemma}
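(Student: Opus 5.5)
The plan is to treat the $U$-equation of \eqref{periodic} as a heat equation forced by a term that is small in $r_*$. We then use maximal regularity plus interpolation between an $r_*$-independent $W^{2,1}$-type bound and an $r_*$-small $L^p$ bound on the forcing. The exponent $\frac{1}{n+2}$ suggests interpolating the $r_*$-small quantity against $\|U\|_{L^\infty}\le K_1$ or against a second-order bound.

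\textbf{Step 1 (smallness of the drift).} From the proof of Lemma \ref{l2p}, \eqref{wbd}, \eqref{UWB} and Lemma \ref{LT} with $p=\tilde p$ large, we obtain
\[
\|W\|_{W^{2,1}_{\tilde p}(Q_T)}\le c\,\|\lambda(U+V)W-r\|_{L^{\tilde p}(Q_T)}\le c\,r_*(K_1+1).
\]
By the embedding $W^{2,1}_{\tilde p}\hookrightarrow C^{1+\alpha_1,\frac{1+\alpha_1}{2}}$ this gives $\|\nabla W\|_{L^\infty(Q_T)}\le c r_*$. We also have $\int_0^T\|\Delta W\|_{L^{n+2}}^{n+2}\le c r_*^{n+2}$ by \eqref{dwp}.

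\textbf{Step 2 (forcing of the $U$-equation).} We rewrite the $U$-equation as in \eqref{U*}, with $\sigma=1$, $\varepsilon=0$ and an auxiliary $A_1=1$:
\[
U_t-\Delta U+U=f:=U-\chi_1\nabla U\cdot\nabla W-\chi_1U\Delta W .
\]
The $r_*$-independent bounds from Lemma \ref{W21p_global} (with $r_*\le1$) give $\|\nabla U\|_{L^\infty(Q_T)}\le c$, so the last two terms of $f$ are bounded in $L^{n+2}(Q_T)$ by $c r_*$. The term $U$ itself is not small, so we cannot estimate $U$ directly. Instead we work with $\widehat U:=U-\bar U$, where $\bar U=\frac1{|\Omega|}\int_\Omega U$ is a constant by mass conservation. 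It solves
\[
\widehat U_t-\Delta\widehat U+\widehat U=\widehat U+g,\qquad \|g\|_{L^{n+2}(Q_T)}\le c r_*.
\]

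\textbf{Step 3 (smallness of $\widehat U$).} We test the $U$-equation by $\widehat U$ and use the Poincar\'e inequality:
\[
\frac12\frac{d}{dt}\int_\Omega\widehat U^2+\int_\Omega|\nabla U|^2=\chi_1\int_\Omega U\nabla U\cdot\nabla W\le \frac12\int_\Omega|\nabla U|^2+cK_1^2r_*^2 .
\]
Poincar\'e converts this into a decay inequality for $\int_\Omega \widehat U^2$, and Lemma \ref{BDSS} then yields
\[
\sup_t\|\widehat U\|_{L^2}+\int_0^T\|\nabla U\|_{L^2}^2\le c r_*^2 .
\]
Feeding this into Lemma \ref{LT} with $p=n+2$ applied to the $\widehat U$-equation gives $\|\widehat U\|_{W^{2,1}_{n+2}(Q_T)}\le c r_*$. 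The $L^{n+2}$ norm of $\widehat U$ is controlled by interpolating $L^2$ (small) against $L^\infty$ (bounded).

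\textbf{Step 4 (interpolation to $L^\infty$ of the gradient; the main obstacle).} To reach $\|\nabla U\|_{L^\infty}$ we interpolate, via Gagliardo--Nirenberg, between the $r_*$-independent bound $\|U\|_{C^{2+\alpha,1+\frac\alpha2}}\le M$ and a small low norm. For instance,
\[
\|\nabla U\|_{L^\infty}\le c\|D^2U\|_{L^\infty}^{1-\theta}\|\widehat U\|_{L^{q}}^{\theta}+c\|\widehat U\|_{L^q}.
\]
Any positive power of $r_*$ is acceptable after adjusting constants, since $r_*\le1$ means a larger power gives a stronger bound. The exponent $\frac1{n+2}$ arises naturally if we use $\|\widehat U\|_{L^{n+2}}$ in space for a.e.\ $t$ together with time-Hölder continuity. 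This pointwise-in-time passage, from the time-integrated smallness of Step 3 to $\sup_t$, is the delicate point. I would handle it with Lemma \ref{BDSS} applied to $\int_\Omega|\nabla U|^{2n+2}$, exactly as in Lemma \ref{Nu2p}. There the right-hand side $h$ now carries the factor $\|\nabla W\|_{L^\infty}^2+\|D^2W\|_{L^{n+2}}^{n+2}$, which is small. This gives $\sup_t\|\nabla U\|_{L^{2n+2}}$ as a power of $r_*$, and the final Gagliardo--Nirenberg step against the $C^{2+\alpha}$ bound then yields \eqref{grad_u_rate}.
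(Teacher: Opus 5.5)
Steps 1 and 3 are essentially sound; Step 1 is what \eqref{c2a-1}--\eqref{c2a-2} give once $U,V\le K_1$. The route you commit to in Step 4, however, does not prove the stated rate.

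First, the inequality of Lemma \ref{Nu2p} does not have a small right-hand side. The boundary integral $J_1$ contributes $c_1\int_\Omega|\nabla U|^{2n+2}$ with $c_1$ depending only on $\Omega$. The damping needed for Lemma \ref{BDSS} is created by adding $2(n+1)\int_\Omega|\nabla U|^{2n+2}$. In Lemma \ref{Nu2p} both are converted into $\int_\Omega|\nabla U|^{2n+4}$ plus a constant, whose time integral is $O(1)$, not small. So ``exactly as in Lemma \ref{Nu2p}'' gives only an $r_*$-independent bound, and your claim that $h$ ``carries the factor $\|\nabla W\|_{L^\infty}^2+\|D^2W\|_{L^{n+2}}^{n+2}$'' is false for these terms.

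This can be repaired using Step 3: $\int_0^T\!\int_\Omega|\nabla U|^{2n+2}\le\|\nabla U\|_{L^\infty(Q_T)}^{2n}\int_0^T\!\int_\Omega|\nabla U|^2\le cr_*^2$. But then $\sup_t\|\nabla U\|_{L^{2n+2}}\le cr_*^{1/(n+1)}$, and Gagliardo--Nirenberg against $\|D^2U\|_{L^\infty}\le M$ gives only $\|\nabla U\|_{L^\infty}\le cr_*^{2/(3n+2)}$. That is weaker than $r_*^{1/(n+2)}$ as soon as $n\ge3$. Your remark that ``any positive power is acceptable'' runs the wrong way: for $r_*\le1$ a smaller exponent is a weaker estimate. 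The alternative you float, using $\|\widehat U\|_{L^{n+2}}\le cr_*^{2/(n+2)}$, is worse still: it gives $r_*^{2/(3n+4)}$, below $r_*^{1/(n+2)}$ for every $n$.

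What you miss is that there is no ``delicate passage'' at all, because Step 3 already gives a pointwise-in-time bound. With $y(t):=\int_\Omega\widehat U^2$ you have $y'+\delta y\le cr_*^2$, where $\delta=1/C_P$. Since $y$ is $T$-periodic, this forces $y\le cr_*^2/\delta$ for all $t$, i.e.\ $\sup_t\|\widehat U\|_{L^2}\le cr_*$. (Your display has the unsquared norm on the left, which overstates this.) Your first Gagliardo--Nirenberg display with $q=2$ then gives
\[
\|\nabla U\|_{L^\infty}\le c\|D^2U\|_{L^\infty}^{\frac{n+2}{n+4}}\|\widehat U\|_{L^2}^{\frac{2}{n+4}}+c\|\widehat U\|_{L^2}\le cr_*^{\frac{2}{n+4}}\le cr_*^{\frac{1}{n+2}},
\]
since $\frac{2}{n+4}\ge\frac{1}{n+2}$ and $r_*\le1$.

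This is a valid proof, and a different one from the paper's. The paper uses only $\|\nabla W\|_{L^\infty}\le cr_*^{1/2}$, obtained by Gagliardo--Nirenberg on $W$ with $\|\Delta W\|_{L^\infty}\le K_1$. It tests the $U$-equation by $U$ and then by $-\Delta U$ to get $\sup_t\|\nabla U\|_{L^2}^2\le cr_*$, and interpolates $\nabla U$ between $L^2$ and $W^{1,\infty}$. Your version skips the $-\Delta U$ test. In exchange it depends on the sharper bound $\|\nabla W\|_{L^\infty}\le cr_*$ from maximal regularity; with only $r_*^{1/2}$ it would give $r_*^{1/(n+4)}$, which is too weak. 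The $W^{2,1}_{n+2}$ bound at the end of Step 3 is not needed, and the interpolation you cite there yields $r_*^{2/(n+2)}$, not $r_*$.
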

\begin{proof}
Multiplying the first equation of \eqref{periodic} by $U$, integrating the results by parts and using Young's inequality and \eqref{UWB}, we obtain
\begin{equation}\label{u_L2_est}
\frac{1}{2}\frac{d}{dt}\int_\Omega U^2 + \frac{1}{2}\int_\Omega |\nabla U|^2 \leq \frac{\chi_1^2}{2} \| U\|_{L^\infty}^2 \int_\Omega |\nabla  W|^2\leq \frac{\chi_1^2K_1^2}{2}\int_\Omega |\nabla  W|^2.
\end{equation}
Using \eqref{wbd}, \eqref{UWB} and the Gagliardo-Nirenberg inequality derives 
\begin{equation}\label{lwrbd}
\|\nabla  W(\cdot, t)\|_{L^\infty} \leq c_1( \| W(\cdot, t)\|_{L^\infty}^{\frac{1}{2}} \|\Delta  W(\cdot, t)\|_{L^\infty}^{\frac{1}{2}} + \| W(\cdot, t)\|_{L^\infty}) \leq c_2 r_*^{\frac{1}{2}}.   
\end{equation}
Substituting \eqref{lwrbd} into \eqref{u_L2_est}, one gets
\begin{equation}\label{ul2est}
\frac{d}{dt}\int_\Omega U^2 + \int_\Omega |\nabla U|^2 \leq\chi_1^2 K_1^2 |\Omega| c_2^2 r_*.   
\end{equation}
Integrating \eqref{ul2est} over $(0,T)$ directly yields
\begin{equation}\label{grad_u_int}
\int_0^T \int_\Omega |\nabla  U|^2  \leq \chi_1^2 K_1^2 |\Omega|T c_2^2 r_*=: c_3r_*,
\end{equation}
and hence there exists some $t_0\in [0,T]$  such that
\begin{equation}\label{grad_u_int*}
\int_\Omega |\nabla U(\cdot,t_0)|^2\leq \frac{c_3}{T}r_*.
\end{equation}

Next, we multiply the first equation of \eqref{periodic} by $-\Delta  U$ and integrate the results by parts to obtain
\begin{equation*}
\begin{aligned}
\frac{1}{2}\frac{d}{dt}\int_\Omega |\nabla  U|^2 + \int_\Omega |\Delta  U|^2 &= \chi_1 \int_\Omega \nabla \cdot ( U \nabla  W) \Delta  U \\
&\leq \frac{1}{2}\int_\Omega |\Delta  U|^2 + \frac{\chi_1^2}{2} \int_\Omega \left( |\nabla  U|^2 |\nabla  W|^2 +  U^2 |\Delta  W|^2 \right),
\end{aligned}
\end{equation*}
which, together with $\| U\|_{L^\infty} \leq K_1$ in \eqref{UWB} and \eqref{lwrbd}, gives
\begin{equation}\label{grad_u_H1}
\begin{split}
\frac{d}{dt}\int_\Omega |\nabla  U|^2 
&\leq \chi_1^2 \|\nabla  W\|_{L^\infty}^2 \int_\Omega |\nabla  U|^2 + \chi_1^2 \| U\|_{L^\infty}^2 \int_\Omega |\Delta  W|^2\\
&\leq c_2^2\chi_1^2 r_* \int_\Omega |\nabla  U|^2+K_1^2\chi_1^2 \int_\Omega |\Delta  W|^2.
\end{split}
\end{equation}
Noting the second inequality in \eqref{c2a-1} and $0<r_*\leq 1$, we apply the H\"{o}lder inequality to get
\begin{equation}\label{grad_u_H1-1}
\int_0^T\int_\Omega |\Delta W|^2\leq \|\Delta W\|_{L^{\tilde{p}}(Q_T)}^2(|\Omega|T)^{\frac{\tilde{p}-2}{\tilde{p}}}\leq c_4 r_*.
\end{equation}
Integrating \eqref{grad_u_H1} over $(t_0,t)$ for any $0\leq t_0<t\leq T$, and using \eqref{grad_u_int}, \eqref{grad_u_int*} and \eqref{grad_u_H1-1}, one derives

\begin{equation}\label{grad_u_sup_L2}
\begin{split}
\sup_{t \in (0, T)} \|\nabla  U(\cdot, t)\|_{L^2}^2
&\leq
c_5r_*\int_0^T\int_\Omega |\nabla U|^2+\chi_1^2K_1^2\int_0^T\int_\Omega |\Delta W|^2+\int_\Omega |\nabla U(x,t_0)|^2\\
&\leq c_3c_5r_*^2+\chi_1^2K_1^2c_4r_*+\frac{c_3}{T}r_*\leq c_{6} r_*.
\end{split}
\end{equation}
Finally, by \eqref{c2a}, \eqref{grad_u_sup_L2} and the Gagliardo-Nirenberg inequality, we have
\begin{equation*}
\|\nabla  U\|_{L^\infty} \leq c_{7}( \|\Delta U\|_{L^\infty}^{\frac{n}{n+2}} \|\nabla  U\|_{L^2}^{\frac{2}{n+2}}+\|\nabla  U\|_{L^2})\leq c_{8}r_*^\frac{1}{n+2}.
\end{equation*}
This completes the proof of Lemma \ref{grad_u_decay}.
\end{proof}

With Lemma \ref{grad_u_decay}, we now prove the uniqueness and global stability for the positive $T$-periodic solution.
\begin{lemma}\label{uni1}
Let $(U,V,W)$ be the positive non-constant $T$-periodic solution to \eqref{periodic} obtained in Theorem \ref{GBSp} satisfying
\begin{equation}\label{TMC}
\int_\Omega U(\cdot,t)=M_U,\quad
\int_\Omega V(\cdot,t)=M_V,
\end{equation}
where $M_U$ and $M_V$ are two positive constants. Then there exists a small constant $r_1>0$ such that if $0<r_*\leq r_1$, the non-constant positive $T$-periodic solution $(U,V,W)(x,t)$ is unique subject to the mass constraints \eqref{TMC}.
\end{lemma}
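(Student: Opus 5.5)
Suppose $(U_1,V_1,W_1)$ and $(U_2,V_2,W_2)$ are two positive $T$-periodic solutions of \eqref{periodic} obtained in Theorem \ref{GBSp}, both satisfying the mass constraints \eqref{TMC}. Set $\phi=U_1-U_2$, $\psi=V_1-V_2$, $\zeta=W_1-W_2$. By \eqref{TMC}, $\int_\Omega\phi=\int_\Omega\psi=0$ for all $t$, so the Poincaré inequality $\|\phi\|_{L^2}\le c_P\|\nabla\phi\|_{L^2}$ (and likewise for $\psi$) is available. The plan is to derive a coupled $L^2$-energy inequality for $(\phi,\psi,\zeta)$ in which every cross term carries a small factor that is a positive power of $r_*$. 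Integrating over one period and using periodicity (so that the time-derivative terms vanish) will then force $\phi=\psi=\zeta\equiv0$.

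\textbf{Step 1: the $\zeta$-equation.} It reads
\[
\zeta_t=d\Delta\zeta-\lambda(U_1+V_1)\zeta-\mu\zeta-\lambda W_2(\phi+\psi).
\]
Testing with $\zeta$ and using $0\le W_2\le r_*/\mu$ from \eqref{wbd} gives
\[
\tfrac12\tfrac{d}{dt}\|\zeta\|_2^2+d\|\nabla\zeta\|_2^2+\tfrac{\mu}{2}\|\zeta\|_2^2\le c\,r_*^2(\|\phi\|_2^2+\|\psi\|_2^2).
\]
Testing the same equation with $-\Delta\zeta$ yields $\|\Delta\zeta\|_{L^2}$ control with a right-hand side involving $\|\zeta\|_2^2$, $\|\nabla\zeta\|_2^2$ (weighted by $\|\nabla(U_1+V_1)\|_\infty$, bounded by \eqref{c2a}) and $r_*^2(\|\phi\|_2^2+\|\psi\|_2^2)$.

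\textbf{Step 2: the $\phi$-equation.} Write
\[
\phi_t=\Delta\phi-\chi_1\nabla\cdot(\phi\nabla W_1)-\chi_1\nabla\cdot(U_2\nabla\zeta).
\]
Testing with $\phi$ gives
\[
\tfrac12\tfrac{d}{dt}\|\phi\|_2^2+\|\nabla\phi\|_2^2\le \chi_1\|\nabla W_1\|_\infty\|\phi\|_2\|\nabla\phi\|_2+\chi_1K_1\|\nabla\zeta\|_2\|\nabla\phi\|_2 .
\]
By \eqref{lwrbd}, $\|\nabla W_1\|_\infty\le c r_*^{1/2}$, which together with Poincaré makes the first term absorbable. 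The second term is bounded by $\tfrac14\|\nabla\phi\|_2^2+c\|\nabla\zeta\|_2^2$.

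\textbf{Step 3: the $\psi$-equation.} Write
\[
\psi_t=\Delta\psi-\chi_2\nabla\cdot(\psi\nabla U_1)-\chi_2\nabla\cdot(V_2\nabla\phi).
\]
Here Lemma \ref{grad_u_decay} gives $\|\nabla U_1\|_\infty\le C_{11}r_*^{1/(n+2)}$, so the first term is absorbed via Poincaré. The second term is at most $\tfrac14\|\nabla\psi\|_2^2+c\|\nabla\phi\|_2^2$.

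\textbf{Step 4: assembling the functional.} I would consider
\[
E=\|\psi\|_2^2+A\|\phi\|_2^2+B\|\zeta\|_2^2 ,
\]
with $A$ large (to absorb $c\|\nabla\phi\|_2^2$ coming from Step 3) and then $B$ large (to absorb $cA\|\nabla\zeta\|_2^2$ coming from Step 2). The remaining feedback term $Bc\,r_*^2(\|\phi\|_2^2+\|\psi\|_2^2)$ from Step 1 is then absorbed by $\|\nabla\phi\|_2^2+\|\nabla\psi\|_2^2$ through Poincaré, provided $r_*\le r_1$ is small. Since $A$ and $B$ are chosen independently of $r_*$ (they depend only on $K_1,\chi_i,d,\mu$), this is consistent. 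We then obtain
\[
E'+\kappa\bigl(\|\nabla\phi\|_2^2+\|\nabla\psi\|_2^2+\|\zeta\|_2^2\bigr)\le0 .
\]
Integrating over $(0,T)$ and using periodicity gives $\nabla\phi=\nabla\psi=\zeta=0$, hence $\phi=\psi=0$ by the zero-mean property.

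\textbf{Main obstacle.} The hardest point is the hierarchy of absorptions. The $\nabla\zeta$ term in Step 2 carries no small factor, so the smallness has to come entirely from the coupling $\lambda W_2(\phi+\psi)$, which is $O(r_*)$. I must verify that $A$ and $B$ can genuinely be fixed independently of $r_*$, so that the single small parameter $r_*$ closes the loop; if this fails, I would instead use the $H^1$-estimate of $\zeta$ from Step 1.
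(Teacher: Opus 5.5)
Your proposal is correct and follows essentially the same route as the paper: a weighted $L^2$ energy $\int_\Omega(\psi^2+A\phi^2+B\zeta^2)$ for the differences, zero-mean Poincar\'e, and weights fixed independently of $r_*$ (the paper takes $c_2=4\chi_2^2K_1^2+1$ and $c_3=2c_2\chi_1^2K_1^2/d+1$). As in the paper, smallness comes from $\|\nabla W\|_{L^\infty}\le c\,r_*^{1/2}$, $\|\nabla U\|_{L^\infty}\le C_{11}r_*^{1/(n+2)}$ and $\|W\|_{L^\infty}\le r_*/\mu$, and the argument closes by integrating over one period; the $-\Delta\zeta$ test in Step 1 is unnecessary, because $Bd\|\nabla\zeta\|_{L^2}^2$ from the plain $L^2$ test already absorbs the $\nabla\zeta$ coupling, so the obstacle you flag does not arise.
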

\begin{proof}
 Let $(U_1,V_1,W_1)$ and $(U_2,V_2,W_2)$ be two solutions to \eqref{periodic} satisfying \eqref{TMC}, and denote $(\widetilde{U},\widetilde{V},\widetilde{W}):=(U_1-U_2,V_1-V_2,W_1-W_2)$. Then we have
\begin{equation}\label{error_s2}
\begin{cases}
\widetilde{U}_t = \Delta \widetilde{U} - \chi_1 \nabla \cdot (U_1 \nabla \widetilde{W} + \widetilde{U} \nabla W_2), & x \in \Omega, \ t > 0, \\
\widetilde{V}_t = \Delta \widetilde{V} - \chi_2 \nabla \cdot (V_1 \nabla \widetilde{U} + \widetilde{V} \nabla U_2), & x \in \Omega, \ t > 0, \\
\widetilde{W}_t = d \Delta \widetilde{W} - [\lambda(U_1+V_1) + \mu]\widetilde{W} - \lambda W_2(\widetilde{U} + \widetilde{V}), & x \in \Omega, \ t > 0, \\
\nabla\widetilde{U}\cdot{\nu} =\nabla \widetilde{V}\cdot \nu = \nabla \widetilde{W}\cdot{\nu} = 0, & x \in \partial \Omega, \ t > 0, \\
(\widetilde{U},\widetilde{V},\widetilde{W})(x,t) = (\widetilde{U},\widetilde{V},\widetilde{W})(x,t+T), & x \in \Omega,t\geq 0.
\end{cases}   
\end{equation}
We integrate the first two equations of \eqref{error_s2} over $\Omega$, then apply \eqref{TMC} to get
\begin{equation}\label{mass_bar}
\int_\Omega \widetilde{U}(\cdot,t) = \int_\Omega \widetilde{V}(\cdot,t) = 0, \quad \forall t \ge 0.
\end{equation}
By \eqref{mass_bar} and the Poincar\'e inequality, there exists a constant $c_1>0$ such that
\begin{equation}\label{poincare_bar}
\int_\Omega \widetilde{U}^2 \le c_1 \int_\Omega |\nabla \widetilde{U}|^2 \quad \text{and} \quad \int_\Omega \widetilde{V}^2 \le c_1 \int_\Omega |\nabla \widetilde{V}|^2.
\end{equation}
Multiplying the first equation in \eqref{error_s2} by $\widetilde{U}$, integrating the results by parts over $\Omega$, and then applying Young's inequality yield
\begin{equation*}
\begin{aligned}
\frac{1}{2}\frac{d}{dt}\int_\Omega \widetilde{U}^2 + \int_\Omega |\nabla \widetilde{U}|^2 
&= \chi_1 \int_\Omega U_1 \nabla \widetilde{W} \cdot \nabla \widetilde{U} + \chi_1 \int_\Omega \widetilde{U} \nabla W_2 \cdot \nabla \widetilde{U} \\
&\le \frac{1}{2} \int_\Omega |\nabla \widetilde{U}|^2 + \chi_1^2 \|U_1\|_{L^\infty}^2 \int_\Omega |\nabla \widetilde{W}|^2 + \chi_1^2 \|\nabla W_2\|_{L^\infty}^2 \int_\Omega \widetilde{U}^2,
\end{aligned}
\end{equation*}
which, together with 
 $\|U_1\|_{L^\infty} \le K_1$ (see \eqref{UWB}) and \eqref{poincare_bar}, gives
\begin{equation}\label{est_U_bar}
\frac{d}{dt}\int_\Omega \widetilde{U}^2 + \frac{1}{2c_1}\int_\Omega \widetilde{U}^2 + \frac{1}{2}\int_\Omega |\nabla \widetilde{U}|^2 \le 2\chi_1^2 K_1^2 \int_\Omega |\nabla \widetilde{W}|^2 + 2\chi_1^2 \|\nabla W_2\|_{L^\infty}^2 \int_\Omega \widetilde{U}^2.
\end{equation}
Similarly, we multiply the second equation of \eqref{error_s2} by $\widetilde{V}$, then use the facts $\|V_1\|_{L^\infty} \le K_1$ (see \eqref{UWB}) and \eqref{poincare_bar} to obtain
\begin{equation}\label{est_V_bar}
\frac{1}{2}\frac{d}{dt}\int_\Omega \widetilde{V}^2 + \frac{1}{2c_1}\int_\Omega \widetilde{V}^2 \le \chi_2^2 K_1^2 \int_\Omega |\nabla \widetilde{U}|^2 +\chi_2^2 \|\nabla U_2\|_{L^\infty}^2 \int_\Omega \widetilde{V}^2.
\end{equation}
We multiply the third equation of \eqref{error_s2} by $\widetilde{W}$, apply Young's inequality, and use the non-negativity of $U_1, V_1$ to obtain
\begin{equation}\label{est_W_bar}
\begin{aligned}
\frac{1}{2}\frac{d}{dt}\int_\Omega \widetilde{W}^2 + d \int_\Omega |\nabla \widetilde{W}|^2 + \mu \int_\Omega \widetilde{W}^2 
&\le -\int_\Omega \lambda(U_1+V_1)\widetilde{W}^2 - \lambda \int_\Omega W_2(\widetilde{U} + \widetilde{V})\widetilde{W} \\
&\le \frac{\mu}{2} \int_\Omega \widetilde{W}^2 + \frac{\lambda^2}{\mu} \|W_2\|_{L^\infty}^2 \left(\int_\Omega \widetilde{U}^2 + \int_\Omega \widetilde{V}^2\right).
\end{aligned}
\end{equation}

Multiplying \eqref{est_U_bar} by $c_2 := 4\chi_2^2 K_1^2 + 1$ and \eqref{est_W_bar} by $c_3 := \frac{2c_2 \chi_1^2 K_1^2}{d} + 1$, and adding the results to \eqref{est_V_bar}, then using \eqref{UVW}, \eqref{grad_u_rate} and \eqref{lwrbd}, one derives 
\begin{equation*}
\begin{aligned}
& \frac{1}{2}\frac{d}{dt}\int_\Omega (c_2\widetilde{U}^2 + \widetilde{V}^2 + c_3\widetilde{W}^2) + \frac{c_2}{4c_1}\int_\Omega\widetilde{U}^2 + \frac{1}{2c_1}\int_\Omega \widetilde{V}^2 + \frac{c_3 \mu}{2} \int_\Omega \widetilde{W}^2 \\
& \le \left( c_2 \chi_1^2 \|\nabla W_2\|_{L^\infty}^2 + c_3 \frac{\lambda^2}{\mu} \|W_2\|_{L^\infty}^2 \right) \int_\Omega \widetilde{U}^2  + \left( \chi_2^2 \|\nabla U_2\|_{L^\infty}^2 + c_3 \frac{\lambda^2}{\mu} \|W_2\|_{L^\infty}^2 \right) \int_\Omega \widetilde{V}^2 \\
& \le (c_4 r_*+c_5 r_*^2 )\int_\Omega \widetilde{U}^2+(c_6r_*^{\frac{2}{n+2}}+c_7r_*^2)\int_\Omega \widetilde{V}^2,
\end{aligned}
\end{equation*}
which gives
\begin{equation}\label{G1}
\begin{split}
 &\frac{1}{2}\frac{d}{dt}\int_\Omega (c_2\widetilde{U}^2 + \widetilde{V}^2 + c_3\widetilde{W}^2) + \left(\frac{c_2}{4c_1}-c_4 r_*-c_5 r_*^2\right)\int_\Omega\widetilde{U}^2
 \\ 
 &+ \left(\frac{1}{2c_1}-c_6r_*^{\frac{2}{n+2}}-c_7r_*^2\right)\int_\Omega \widetilde{V}^2 + \frac{c_3 \mu}{2} \int_\Omega \widetilde{W}^2\leq 0.\\
 \end{split}
\end{equation}
Since above constants $c_i~(i=1,2,\cdots,7)$ are independent of $r_*$, then there exists a constant $r_1>0$ such that for any $r_*<r_1$, we have
\begin{equation*}
\frac{c_2}{4c_1}-c_4 r_*-c_5 r_*^2\geq \frac{c_2}{8c_1}\ \ \mathrm{and} \ \ \frac{1}{2c_1}-c_6r_*^{\frac{2}{n+2}}-c_7r_*^2\geq \frac{1}{4c_1},
\end{equation*}
which, substituted into \eqref{G1}, gives
\begin{equation}\label{est_diff_decay*}
\frac{d}{dt}\int_\Omega (c_2\widetilde{U}^2 + \widetilde{V}^2 +  c_3\widetilde{W}^2) + \frac{c_2}{4c_1}\int_\Omega \widetilde{U}^2 +\frac{1}{2c_1}\int_\Omega \widetilde{V}^2 + c_3\mu \int_\Omega \widetilde{W}^2 \leq 0.
\end{equation}
Let $\tilde{E}_{1}(t):=\int_\Omega (c_2\widetilde{U}^2 + \widetilde{V}^2 + c_3\widetilde{W}^2)$, and $c_8:=\min\{\frac{1}{2c_1},\mu\}$, then \eqref{est_diff_decay*} yields that
\begin{equation}\label{est_diff_decay}
\frac{d}{dt}\tilde{E}_{1}(t) + c_8 \tilde{E}_{1}(t) \leq 0, \quad \forall t \geq 0.
\end{equation}
Integrating \eqref{est_diff_decay} over $(t,t+T)$, one has 
$$\int_t^{t+T}\int_\Omega (c_2\widetilde{U}^2 + \widetilde{V}^2 + c_3\widetilde{W}^2)\leq 0,$$
which entails $\widetilde{U} \equiv\widetilde{V} \equiv \widetilde{W}\equiv0$ in $\Omega \times (0,\infty)$. Hence, we obtain $U_1\equiv U_2, V_1\equiv V_2$ and $ W_1\equiv W_2$ in $\Omega \times (0,\infty)$.
This finishes the proof of uniqueness.
\end{proof}

\begin{proof}[{\bf Proof of Theorem \ref{GBSps}}]
The uniqueness directly follows from Lemma \ref{uni1}. In the sequel, we shall establish the global stability. By Lemma \ref{lem:holder}, we know that for $0<r_*\leq1$,
there exist positive constants $c_*$ and $\tilde{\alpha}$ independent of $r_*$ and $t$ such that
\begin{equation}\label{CS-B}
\|u\|_{{C^{\tilde{\alpha}}(\overline{\Omega})}}+\|v\|_{C^{\tilde{\alpha}}(\overline{\Omega})}\leq c_*,\ t\geq2.
\end{equation}
Denote $\widetilde{U}:= u - U$, $\widetilde{V}:= v - V$, and $\widetilde{W} = w - W$, then the combination of \eqref{periodic} and  \eqref{system} gives
\begin{equation}\label{error_system}
\begin{cases}
\widetilde{U}_t = \Delta \widetilde{U} - \chi_1 \nabla \cdot (u \nabla \widetilde{W} + \widetilde{U} \nabla W), & x \in \Omega, \ t > 0, \\
\widetilde{V}_t = \Delta \widetilde{V} - \chi_2 \nabla \cdot (v \nabla \widetilde{U} + \widetilde{V} \nabla U), & x \in \Omega, \ t > 0, \\
\widetilde{W}_t = d \Delta \widetilde{W} - [\lambda(u+v) + \mu]\widetilde{W} - \lambda W(\widetilde{U} + \widetilde{V}), & x \in \Omega, \ t > 0, \\
\nabla\widetilde{U}\cdot{\nu} =\nabla \widetilde{V}\cdot \nu = \nabla \widetilde{W}\cdot{\nu} = 0, & x \in \partial \Omega, \ t > 0, \\
\widetilde{U}(x,0) = u_0(x) - U(x,0), \quad \widetilde{V}(x,0) = v_0(x) - V(x,0), & x \in \Omega, \\
\widetilde{W}(x,0) = w_0(x) - W(x,0), & x \in \Omega.
\end{cases}
\end{equation}
By the mass conservation of \eqref{system} and \eqref{periodic} alongside \eqref{IC-A}, we deduce
\begin{equation}\label{ZMC}
 \int_\Omega \widetilde{U}(\cdot,t) = \int_\Omega \widetilde{V}(\cdot,t) = 0,\ \forall t \ge 0.
\end{equation}
Then noting \eqref{ZMC} and applying the Poincar\'e inequality, there exists a constant $c_1>0$ such that 
\begin{equation}\label{poincare}
\int_\Omega \widetilde{U}^2 \le c_1 \int_\Omega |\nabla \widetilde{U}|^2 \quad \text{and} \quad \int_\Omega \widetilde{V}^2 \le c_1 \int_\Omega |\nabla \widetilde{V}|^2.
\end{equation}
Multiplying the first equation of \eqref{error_system} by $\widetilde{U}$, and integrating the results by parts over $\Omega$, then applying Young's inequality and \eqref{CS-B}, we obtain
\begin{equation*}
\begin{aligned}
\frac{1}{2}\frac{d}{dt}\int_\Omega \widetilde{U}^2 + \int_\Omega |\nabla \widetilde{U}|^2 
&= \chi_1 \int_\Omega u \nabla \widetilde{W} \cdot \nabla \widetilde{U} + \chi_1 \int_\Omega \widetilde{U} \nabla W \cdot \nabla \widetilde{U} \\
&\le \frac{1}{2} \int_\Omega |\nabla \widetilde{U}|^2 + \chi_1^2 c_*^2 \int_\Omega |\nabla \widetilde{W}|^2 + \chi_1^2 \|\nabla W\|_{L^\infty}^2 \int_\Omega \widetilde{U}^2,
\end{aligned}
\end{equation*}
which together with \eqref{poincare} gives
\begin{equation}\label{est_phi}
\frac{d}{dt}\int_\Omega \widetilde{U}^2 + \frac{1}{2c_1}\int_\Omega \widetilde{U}^2 +\frac{1}{2}\int_\Omega |\nabla \widetilde{U}|^2\le 2\chi_1^2 c_*^2 \int_\Omega |\nabla \widetilde{W}|^2 + 2\chi_1^2 \|\nabla W\|_{L^\infty}^2 \int_\Omega \widetilde{U}^2.
\end{equation}
Similarly, we derive from the  second equation of \eqref{error_system} that 
\begin{equation}\label{est_psi}
\frac{1}{2}\frac{d}{dt}\int_\Omega \widetilde{V}^2 + \frac{1}{2c_1}\int_\Omega \widetilde{V}^2 \le \chi_2^2 c_*^2 \int_\Omega |\nabla \widetilde{U}|^2 + \chi_2^2 \|\nabla U\|_{L^\infty}^2 \int_\Omega \widetilde{V}^2.
\end{equation}
Next, multiplying the third equation of \eqref{error_system} by $\widetilde{W}$, and integrating the results over $\Omega$, then applying Young's inequality, one derives 
\begin{equation}\label{est_theta}
\frac{1}{2}\frac{d}{dt}\int_\Omega \widetilde{W}^2 + d \int_\Omega |\nabla \widetilde{W}|^2 + \frac{\mu}{2} \int_\Omega \widetilde{W}^2 \le \frac{\lambda^2}{\mu} \|W\|_{L^\infty}^2 \left(\int_\Omega \widetilde{U}^2 + \int_\Omega \widetilde{V}^2\right).
\end{equation}

Multiplying \eqref{est_phi} by $c_2:= 4\chi_2^2 c_*^2 + 1$ and \eqref{est_theta} by $c_3: = \frac{2c_2 \chi_1^2 c_*^2}{d} + 1$, and adding the results to \eqref{est_psi}, we obtain for any $r_*\in (0,1]$ that
\begin{equation}\label{est_total}
\begin{aligned}
& \frac{1}{2}\frac{d}{dt}\int_\Omega (c_2\widetilde{U}^2 + \widetilde{V}^2 + c_3\widetilde{W}^2) + \frac{c_2}{4c_1}\int_\Omega\widetilde{U}^2 + \frac{1}{2c_1}\int_\Omega \widetilde{V}^2 + \frac{c_3 \mu}{2} \int_\Omega \widetilde{W}^2\\
& \le \left( c_2 \chi_1^2 \|\nabla W\|_{L^\infty}^2 + c_3 \frac{\lambda^2}{\mu} \|W\|_{L^\infty}^2 \right) \int_\Omega \widetilde{U}^2  + \left( \chi_2^2 \|\nabla U\|_{L^\infty}^2 + c_3 \frac{\lambda^2}{\mu} \|W\|_{L^\infty}^2 \right) \int_\Omega \widetilde{V}^2 \\
& \le (c_{12} r_*+c_{13} r_*^2 )\int_\Omega \widetilde{U}^2+(c_{14}r_*^{\frac{2}{n+2}}+c_{15}r_*^2)\int_\Omega \widetilde{V}^2,
\end{aligned}
\end{equation}
where we have used \eqref{UVW}, \eqref{grad_u_rate} and \eqref{lwrbd}.
From \eqref{est_total}, there exists a constant $0<r_0\leq r_1$ such that for any $r_*<r_0$, it holds that
\begin{equation}\label{est_total_decay*}
\frac{1}{2}\frac{d}{dt}\int_\Omega (c_2\widetilde{U}^2 + \widetilde{V}^2 + c_3\widetilde{W}^2) + \frac{c_2}{8c_1}\int_\Omega \widetilde{U}^2 + \frac{1}{4c_1}\int_\Omega \widetilde{V}^2 + \frac{c_3 \mu}{2} \int_\Omega \widetilde{W}^2 \leq 0.
\end{equation}
Denote $E_2(t):= \int_\Omega (c_2\widetilde{U}^2 + \widetilde{V}^2 + c_3\widetilde{W}^2)$ and $c_4:=\min\big\{\frac{1}{2c_1},\mu\big\}$. We follow from \eqref{est_total_decay*} that
\begin{equation*}
\frac{d}{dt}E_2(t)+c_4 E_2(t)\leq 0,
\end{equation*}
which together with Gronwall's inequality gives
\begin{equation}\label{l2_stable_tau1}
\|\widetilde{U}(\cdot,t)\|_{L^2} + \|\widetilde{V}(\cdot,t)\|_{L^2} + \|\widetilde{W}(\cdot,t)\|_{L^2} \leq c_5 e^{-\frac{c_4}{2} t}.
\end{equation}
Let $\hat\alpha:=\min\{\alpha,\tilde{\alpha}\}$. Recalling the interpolation inequality (see \cite[(3.62)]{JinLW-CVPDE-2025}), 
\[
\|f\|_{L^\infty(\Omega)} \leq c_6 \|f\|_{C^{\hat\alpha}(\overline{\Omega})}^{\frac{n}{n+\hat\alpha}} \|f\|_{L^1(\Omega)}^{\frac{\hat\alpha}{n+\hat\alpha}}
\quad \text{for } f \in L^1(\Omega) \cap C^{\hat\alpha}(\overline{\Omega}),
\]
and combing this with \eqref{CS-B} and \eqref{l2_stable_tau1}, we obtain
\begin{equation}\label{uid}
\|\widetilde{U}(\cdot,t)\|_{L^\infty} \leq c_7 \|\widetilde{U}(\cdot,t)\|_{C^{\hat\alpha}(\overline{\Omega})}^{\frac{n}{n+\hat\alpha}} \|\widetilde{U}(\cdot,t)\|_{L^1}^{\frac{\hat\alpha}{n+\hat\alpha}} \leq c_8\|\widetilde{U}(\cdot,t)\|_{L^2}^{\frac{\hat\alpha}{n+\hat\alpha}}\leq c_9 e^{-c_{10} t},\ \forall t\geq2.
\end{equation}
Applying the same interpolation technique yields
\begin{equation*}
\|\widetilde{V}(\cdot,t)\|_{L^\infty} + \|\widetilde{W}(\cdot,t)\|_{L^\infty} \leq c_{11} e^{-c_{12}t},\forall t\geq2.
\end{equation*}
This together with \eqref{uid} completes the proof of Theorem \ref{GBSps}.
\end{proof}

\section{Homogeneous environments: proof of Theorem \ref{LHP} }\label{HP}
This subsection will prove the existence of time-periodic solutions of \eqref{system} under homogeneous environment via Hopf bifurcation theory. To facilitate the comparison between heterogeneous (i.e., $T$-periodic $r(x,t)$) and homogeneous (i.e., constant $r$) environments, we take $r$ as the bifurcation parameter.
\subsection{Linearized stability analysis} 
For the subsequent Hopf bifurcation analysis, we first perform a linear stability analysis with respect to the resource renewal rate  $r$. To this end, we linearize the system \eqref{system} at the constant steady state $(\bar{u}_0,\bar{v}_0,w_c)$ to obtain
\begin{equation*}
\begin{cases}
\Psi_t=\mathcal{A}\Delta\Psi+\mathcal{B}\Psi,&x\in\Omega,t>0,\\
\nabla\Psi\cdot\nu=0,&x\in\partial\Omega,t>0,\\
\int_\Omega \psi_1(x,t)dx=\int_\Omega\psi_2(x,t)dx=0,&x\in \Omega,t>0,\\
\Psi(x,0)=(u-\bar{u}_0,v-\bar{v}_0,w-w_c)^{\mathcal{T}}, &x\in\Omega,
\end{cases}
\end{equation*}
where $\mathcal{T}$ denotes the transpose, $\Psi=(\psi_1,\psi_2, \psi_3)^\mathcal{T}$ and
\begin{equation*}
	\Psi=(\psi_1,\psi_2,\psi_3)^\mathcal{T}:=\begin{pmatrix}
		u-\bar{u}_0\\[2mm]
		v-\bar{v}_0\\[2mm]
		w-w_c
	\end{pmatrix},\ 
	\mathcal{A}=\begin{pmatrix}
		1 & 0 & -\chi_1\bar{u}_0\\[2mm]
		-\chi_2\bar{v}_0 & 1 & 0\\[2mm]
		0 & 0 & d
	\end{pmatrix},\ 
	\mathcal{B}=\begin{pmatrix}
		0 & 0 & 0\\[2mm]
		0 & 0 & 0\\[2mm]
		-\lambda w_c & -\lambda w_c & B_{33}
	\end{pmatrix}
\end{equation*}
with $B_{33}:=-\lambda(\bar{u}_0+\bar{v}_0)-\mu$. Then the corresponding elliptic eigenvalue problem is given as below:
\begin{equation}\label{le}
\begin{cases}
\mathcal{A}\Delta\Phi+\mathcal{B}\Phi=\rho\Phi ,&x\in\Omega,t>0,\\
\nabla\Phi\cdot\nu=0,&x\in\partial\Omega,t>0,\\
\int_\Omega \phi(x)=\int_\Omega \psi(x)=0, & x\in\Omega,
\end{cases}
\end{equation}
where $\Phi:=(\phi(x),\psi(x),\varphi(x))^\mathcal{T}$. The linear stability of $(\bar{u}_0,\bar{v}_0,w_c)$ is determined by the eigenvalues of the matrix $(-\sigma_m\mathcal{A}+\mathcal{B})$, which satisfies the following characteristic equation:
\begin{equation}\label{ef}
	 \rho^3+A_1(r,\sigma_m) \rho^2+A_2(r,\sigma_m) \rho+A_3(r,\sigma_m)=0,
\end{equation}
where $A_i(r,\sigma_m)=:A_i$ ($i=1,2,3$) and
\begin{equation}\label{A123}
	\begin{aligned}
		A_1=&\sigma_m(2+d)+\lambda (\bar{u}_0+\bar{v}_0)+\mu>0,\\
		A_2=&\sigma_m^2(1+2d)+\sigma_m\left\{2[\lambda(\bar{u}_0+\bar{v}_0)+\mu]+\lambda \chi_1\bar{u}_0w_c\right\}>0,\\
         A_3=&\sigma_m^3d+\sigma_m^2[\lambda(\bar{u}_0+\bar{v}_0)+\mu+\chi_1\lambda\bar{u}_0w_c+\chi_1\chi_2\lambda\bar{u}_0\bar{v}_0w_c]>0.\\
    \end{aligned}
\end{equation}
Noting $A_1>0, A_3>0$ for all $m\in\mathbb{Z}^+$, using the Routh-Hurwitz criterion \cite[Appendix B.1]{Murray}, we know that  $(\bar{u}_0,\bar{v}_0,w_c)$ is linearly stable if and only if 
\begin{equation*}\label{criterion}
A_1A_2-A_3>0, \ \forall m\in\mathbb{Z}^+.
\end{equation*}
After some calculations, we derive that 
\begin{equation*}
	A_1A_2-A_3=\sigma_m (\sigma_m^2 B_1 + \sigma_m B_2 + B_3)=:\sigma_mP (r,\sigma_m),
\end{equation*}
where
\begin{equation}\label{P}
	\begin{aligned}
          &P(r,\sigma_m):=\sigma_m^2 B_1 + \sigma_m B_2 + B_3,\\
		&B_1:=2(d+1)^2>0,\\
        &B_2:=4 \left[ \lambda (\bar{u}_0 + \bar{v}_0) + \mu \right] (d+1) + \lambda \chi_1 \bar{u}_0 w_c \left( 1 + d - \chi_2 \bar{v}_0 \right),\\
        &B_3:=[ \lambda (\bar{u}_0 + \bar{v}_0) + \mu ]\{2[ \lambda (\bar{u}_0 + \bar{v}_0) + \mu ]+\lambda\chi_1\bar{u}_0w_c\}>0.
    \end{aligned}
\end{equation} 
Thus, the sign of $P(r,\sigma_m)$ completely governs stability/instability of $(\bar{u}_0,\bar{v}_0,w_c)$. More precisely, we have the following results. 
\begin{lemma}\label{criterion1}
The constant steady state $(\bar{u}_0,\bar{v}_0,w_c)$ is linearly stable if $P(r,\sigma_m)>0$ for all $ m\in \mathbb{Z}^+$, and linearly unstable if $P(r,\sigma_m)<0$ for some $m\in\mathbb{Z}^+$.
\end{lemma}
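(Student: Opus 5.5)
The plan is to reduce linear stability to the sign of the real parts of the roots of the cubic \eqref{ef} for each mode $m\in\mathbb{Z}^+$, and then to apply the Routh--Hurwitz criterion together with the factorization $A_1A_2-A_3=\sigma_mP(r,\sigma_m)$.

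First I will justify the modal reduction. Expanding $\Phi=\sum_{m}\Phi_m e_m(x)$ in the Neumann eigenfunctions $e_m$ of $-\Delta$ turns \eqref{le} into the finite-dimensional problems $(-\sigma_m\mathcal{A}+\mathcal{B})\Phi_m=\rho\Phi_m$. The constraints $\int_\Omega\phi=\int_\Omega\psi=0$ remove the $u,v$ components of the $m=0$ mode. The $w$-component there gives $\rho=B_{33}<0$, which is harmless. Hence the spectrum relevant to stability is the union over $m\ge1$ of the roots of \eqref{ef}.

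Since $\sigma_m\to\infty$, I also need the roots to be bounded away from the imaginary axis uniformly in $m$. For this I will use a scaling $\rho=\sigma_m\tilde\rho$: the leading-order matrix $-\mathcal{A}$ has eigenvalues $-1,-1,-d$, so for large $m$ all roots satisfy $\mathrm{Re}\,\rho\le-c\sigma_m$. The linearized operator is sectorial, so the spectrum determines stability.

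For fixed $m\ge1$, \eqref{A123} gives $A_1>0$ and $A_3>0$. Routh--Hurwitz for a cubic then says that all roots have negative real part if and only if $A_1A_2-A_3>0$. Because $\sigma_m>0$, this condition is equivalent to $P(r,\sigma_m)>0$.

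\textbf{Stability.} If $P(r,\sigma_m)>0$ for every $m\ge1$, each mode is stable. To obtain a uniform spectral gap I use $P\ge B_1\sigma_m^2-|B_2|\sigma_m+B_3$, which grows quadratically, together with the large-$m$ asymptotics above. Then only finitely many modes need checking, and each of those has a strictly negative maximal real part. So $\sup\mathrm{Re}\,\rho<0$, which gives linear stability.

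\textbf{Instability.} If $P(r,\sigma_m)<0$ for some $m$, I will show directly that a root with positive real part exists. The cubic has a real negative root, since $A_3>0$ forces a sign change on $(-\infty,0)$. Writing the factorization $(\rho+a)(\rho^2+b\rho+c)$ with $a>0$ and $ac=A_3>0$ gives $c>0$. Then $A_1=a+b$ and $A_2=ab+c$, so
\[
A_1A_2-A_3=(a+b)(ab+c)-ac=b\bigl(a^2+ab+c\bigr).
\]
The factor $a^2+ab+c$ equals $A_2-a(A_1-a)+a^2$. A cleaner route is the standard Routh--Hurwitz converse: a negative value of $A_1A_2-A_3$ forces $b<0$ when $a^2+ab+c>0$. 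In the case $b<0$, the quadratic factor has roots with positive real part, since $c>0$. If instead $a^2+ab+c\le0$, then $b<0$ automatically, because $c>0$ and $a>0$. In either case $b<0$, so some eigenvalue satisfies $\mathrm{Re}\,\rho>0$, and the steady state is linearly unstable.

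The main obstacle I expect is not the algebra, which is standard Routh--Hurwitz, but the uniformity in $m$ needed for genuine linear stability of the PDE. I will handle this through the quadratic growth of $P$ and the scaling of the roots for large $\sigma_m$.
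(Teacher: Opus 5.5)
Your proposal is correct and follows the paper's argument. Since $A_1,A_3>0$ for $\sigma_m>0$, the Routh--Hurwitz criterion reduces stability of each mode to $A_1A_2-A_3=\sigma_mP(r,\sigma_m)>0$. Your extra steps supply details the paper leaves implicit: discarding the $m=0$ mode via the zero-mean constraints, the uniform spectral gap from the scaling $\rho=\sigma_m\tilde\rho$ with limit polynomial $(\tilde\rho+1)^2(\tilde\rho+d)$, and the elementary factorization proof of instability. One small slip to fix: the aside $a^2+ab+c=A_2-a(A_1-a)+a^2$ is false, since the right side equals $a^2+c$. Nothing depends on it, because $b\ge0$ would give $b(a^2+ab+c)\ge0$, so $A_1A_2-A_3<0$ already forces $b<0$.
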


With Lemma \ref{criterion},  we shall establish stability results subject to appropriate constraints on the model parameters. 
\begin{lemma}\label{RE-LS}
The constant steady state $(\bar{u}_0,\bar{v}_0,w_c)$ is linearly stable, if one of the following condition holds
\begin{itemize}
\item[(a)] $\chi_2\bar{v}_0\leq d+1$;
\item [(b)]$\chi_2\bar{v}_0>d+1\ \ \text{with}\ \ r\leq \frac{4(d+1)[\lambda(\bar{u}_0+\bar{v}_0)+\mu]^2}{(\chi_2\bar{v}_0-d-1)\lambda\chi_1\bar{u}_0}.$
\end{itemize}
\end{lemma}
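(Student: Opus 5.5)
By Lemma \ref{criterion1}, it is enough to show that $P(r,\sigma_m)>0$ for every $m\in\mathbb{Z}^+$ under either (a) or (b). We will use the structure of $P(r,\sigma)=B_1\sigma^2+B_2\sigma+B_3$ from \eqref{P}. Here $B_1>0$ and $B_3>0$ always, so only the sign of $B_2$ matters. Throughout, write $L:=\lambda(\bar{u}_0+\bar{v}_0)+\mu>0$ and recall that $w_c=r/L$.

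\textbf{Case (a).} If $\chi_2\bar{v}_0\le d+1$, then the term $\lambda\chi_1\bar{u}_0w_c(1+d-\chi_2\bar{v}_0)$ in $B_2$ is nonnegative. Hence $B_2\ge 4L(d+1)>0$. For every $\sigma_m>0$ all three terms of $P$ are then positive, so $P(r,\sigma_m)>0$ and linear stability follows.

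\textbf{Case (b).} Suppose $\chi_2\bar{v}_0>d+1$. Now $B_2$ may be negative, so we treat $P(r,\cdot)$ as a quadratic in $\sigma\in\mathbb{R}$. It is positive for all $\sigma>0$ provided either $B_2\ge0$ or its discriminant $B_2^2-4B_1B_3$ is negative.

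The simplest route is to ask for $B_2\ge 0$. This reads $\lambda\chi_1\bar{u}_0\frac{r}{L}(\chi_2\bar{v}_0-d-1)\le 4L(d+1)$, which is exactly
\[
r\le \frac{4(d+1)L^2}{(\chi_2\bar{v}_0-d-1)\lambda\chi_1\bar{u}_0},
\]
the hypothesis in (b). Under this bound $B_2\ge0$, and then $P(r,\sigma_m)\ge B_1\sigma_m^2+B_3>0$ for all $m\in\mathbb{Z}^+$. Lemma \ref{criterion1} then gives linear stability.

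If one wanted a sharper threshold, one could instead use the discriminant condition together with the actual eigenvalues $\sigma_m$. That is not needed for the statement as given.

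The only delicate point is the bookkeeping. We must substitute $w_c=r/L$ correctly (for fixed $\bar{u}_0,\bar{v}_0$, $w_c$ depends linearly on $r$), and we must check that the stated bound is exactly the condition $B_2\ge0$. This is a routine algebraic verification, so we expect no real obstacle.
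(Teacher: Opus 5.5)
Your proof is correct, but it takes a more direct route than the paper. Write $J:=\lambda(\bar u_0+\bar v_0)+\mu$ and substitute $w_c=r/J$. Then condition (b) is literally the inequality $B_2\ge 0$, and (a) forces $B_2>0$. Since $B_1,B_3>0$ and $\sigma_m>0$, positivity of $P(r,\sigma_m)$ is immediate and no discriminant is needed.

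The paper instead works in the variable $\chi_2$. With $I=\chi_2\bar v_0-d-1$, it computes
$$B_2^2-4B_1B_3=\frac{\lambda\chi_1\bar u_0 w_c}{J}\bigl(\lambda\chi_1\bar u_0 r I^2-8J^2(d+1)\chi_2\bar v_0\bigr).$$
It then introduces the roots $\chi_2^\pm(r)$ of this expression and the threshold $\chi_2^*$ at which $B_2$ changes sign, and shows $\chi_2^-<\chi_2^*<\chi_2^+$. From this it concludes $P>0$ whenever $\chi_2<\chi_2^+$: on $\chi_2\le\chi_2^-$ one has $B_2>0$, and on $(\chi_2^-,\chi_2^+)$ the discriminant is negative. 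Conditions (a) and (b) are then checked to lie in $\{\chi_2<\chi_2^+\}$; in fact (b) is exactly $\chi_2\le\chi_2^*$.

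Your argument buys brevity and makes clear where the stated threshold comes from. The paper's argument buys a strictly larger stability region. For $\chi_2\bar v_0>d+1$ it covers all $r<\frac{8J^2(d+1)\chi_2\bar v_0}{\lambda\chi_1\bar u_0 I^2}=r^H(q_*)$, a bound larger than yours by the factor $2\chi_2\bar v_0/I>2$. That extra strength is not needed for the lemma as stated, and it is superseded anyway by Lemma~\ref{LSUS1*}, which works with the discrete eigenvalues $\sigma_m$.
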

\begin{proof}
To prove Lemma \ref{RE-LS}, we first define
\begin{equation}\label{JI}
I:=\chi_2\bar{v}_0-d-1>0,\ \ J:=\lambda(\bar{u}_0+\bar{v}_0)+\mu, 
\end{equation}
then 
$$ B_2^2-4B_1B_3=\frac{\lambda \chi_1\bar{u}_0w_c}{J}(\lambda \chi_1 \bar{u}_0rI^2-8J^2(d+1)\chi_2\bar{v}_0)=: \frac{\lambda \chi_1\bar{u}_0w_c}{J}F(\chi_2,r).$$
Denote
\begin{equation*}
\chi_2^{\pm}(r):=\frac{d+1}{\bar{v}_0}
\Bigl(1+\frac{4J^2}{\lambda\chi_1\bar{u}_0r}
\pm\frac{2J}{\lambda\chi_1\bar{u}_0r}\sqrt{4J^2+2\lambda\chi_1\bar{u}_0r}\,\Bigr) >0, 
\end{equation*}
one can check that  $\chi_2^{\pm}(r)$ are solutions to $F(\chi_2,r)=0.$ After some calculations, we derive
\begin{equation}\label{DELTAIFF}
 B_2^2-4B_1B_3<0\ \ \text{iff}\ \chi_2^-<\chi_2<\chi_2^+ ,
\end{equation}
and
\begin{equation*}
B_2<0\ \  \text{iff}\ \ \chi_2>\frac{d+1}{\bar{v}_0}\big(\frac{4J^2}{\lambda\chi_1\bar{u}_0r}+1\big)=:\chi_2^*,  
\end{equation*}
as well as
\begin{equation}\label{CHI*}
\chi_2^-<\chi_2^*<\chi_2^+.
\end{equation}
Under the condition (a) or (b), calculating directly gives $\chi_2<\chi_2^+$. When $\chi_2\leq \chi_2^-$, \eqref{DELTAIFF}-\eqref{CHI*} imply 
$$B_2>0\ \  \text{and} \ \ B_2^2-4B_1B_3\geq0.$$
This along with the fact $B_2^2-4B_1B_3<0$ for $ \chi_2^-<\chi_2<\chi_2^+$ (see \eqref{DELTAIFF}) yields  that for $\chi_2<\chi_2^+$
\begin{equation}\label{PP}
P(r,\sigma_m)>0, \ \forall m\in \mathbb{Z}^+.
\end{equation}
Then \eqref{PP} together with Lemma \ref{criterion1} finishes the proof of Lemma \ref{RE-LS}.
\end{proof}

\vspace{1.5mm}
Hence, it remains to consider the left regime
\begin{equation}\label{C-HOPF}
\chi_2\bar{v}_0>d+1 \ \mathrm{and} \ \
r>\frac{4(d+1)[\lambda(\bar{u}_0+\bar{v}_0)+\mu]^2}{(\chi_2\bar{v}_0-d-1)\lambda\chi_1\bar{u}_0}.
\end{equation} 
Since $A_3>0$, the matrix $(-\sigma_m\mathcal{A}+\mathcal{B})$ has no zero eigenvalue, no steady state bifurcation occurs. Then the boundary between the stability and instability regime is therefore given by
$$P(r,\sigma_m)=0.$$
Solving $P(r,q)=0$ for $r$ yields $r=r^H(q),$ where $r^H(q)$ is defined in \eqref{rhm}. 

The following lemma gives the monotonicity of $r^H(q)$ with respect to $q>0$.
\begin{lemma}\label{ddx}
Let $\chi_1,\chi_2,\lambda,\mu,\bar{v}_0,\bar{v}_0$ be fixed, and set $q_0:=\frac{\lambda(\bar{u}_0+\bar{v}_0)+\mu}{\chi_2\bar{v}_0-d-1}$. Then for any $q>q_0$, $r^H(q)$ is strictly decreasing for $q\in(q_0,q_*)$, and is strictly increasing for $q\in(q_*,\infty)$ with $q_*$ defined in \eqref{q*}. Moreover, it holds that
\begin{equation}\label{gm}
\min\limits_{q>q_0}\{r^H(q)\}= r^H(q_*)=\frac{8J^2(d+1)\chi_2\bar{v}_0}{\lambda\chi_1\bar{u}_0I^2}=\frac{8(d+1)\chi_2\bar{v}_0[\lambda(\bar{u}_0+\bar{v}_0)+\mu]^2}{(\chi_2\bar{v}_0-d-1)^2\lambda\chi_1\bar{u}_0}.
\end{equation}
\end{lemma}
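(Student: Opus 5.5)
The plan is a direct one-variable calculus argument, using the abbreviations $I$ and $J$ from \eqref{JI} together with $A:=d+1$. With these, \eqref{rhm} reads
\[
r^H(q)=\frac{2J}{\lambda\chi_1\bar u_0}\,g(q),\qquad g(q):=\frac{(Aq+J)^2}{Iq-J}.
\]
The hypothesis $\chi_2\bar v_0>d+1$ gives $I>0$, so $q_0=J/I$ is exactly the zero of the denominator $Iq-J$. Hence on $(q_0,\infty)$ the function $g$ is smooth and positive. Since the prefactor $2J/(\lambda\chi_1\bar u_0)$ is a positive constant, it suffices to study $g$.

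First I differentiate:
\[
g'(q)=\frac{(Aq+J)\bigl[2A(Iq-J)-I(Aq+J)\bigr]}{(Iq-J)^2}=\frac{(Aq+J)\bigl[AIq-(2A+I)J\bigr]}{(Iq-J)^2}.
\]
On $(q_0,\infty)$ both $Aq+J$ and $(Iq-J)^2$ are positive. So the sign of $g'$ is the sign of the affine factor $AIq-(2A+I)J$, which vanishes only at
\[
q=\frac{(2A+I)J}{AI}.
\]
Substituting $A=d+1$ and $I=\chi_2\bar v_0-d-1$, this point is precisely $q_*$ from \eqref{q*}.

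Next I check that $q_*$ lies in the interval: $q_*-q_0=\frac{J}{I}\cdot\frac{2A+I-A}{A}=\frac{J(A+I)}{AI}>0$. Therefore $g'<0$ on $(q_0,q_*)$ and $g'>0$ on $(q_*,\infty)$, which gives the claimed strict monotonicity. It also shows that $q_*$ is the unique global minimizer of $r^H$ on $(q_0,\infty)$.

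Finally I evaluate $g$ at $q_*$. One has $Aq_*+J=\frac{2J(A+I)}{I}$ and $Iq_*-J=\frac{J(A+I)}{A}$. This gives
\[
g(q_*)=\frac{4AJ(A+I)}{I^2},\qquad r^H(q_*)=\frac{8J^2A(A+I)}{\lambda\chi_1\bar u_0 I^2}.
\]
Since $A+I=\chi_2\bar v_0$, this is exactly \eqref{gm}.

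There is no real obstacle here. The only step needing care is the algebraic simplification identifying the critical point with \eqref{q*} and the minimum value with \eqref{gm}.
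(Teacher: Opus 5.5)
Your proposal is correct and follows the paper's proof essentially verbatim. The paper also pulls out the constant $\frac{2J}{\lambda\chi_1\bar u_0}$, differentiates $f(q)=\frac{[(d+1)q+J]^2}{qI-J}$ to get the same factorization of $f'$, and reads off the monotonicity and the minimum; you additionally spell out the steps the paper leaves implicit (the use of $I>0$, the check $q_*>q_0$, and the explicit evaluation of $r^H(q_*)$ via $A+I=\chi_2\bar v_0$), and all of these are correct.
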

\begin{proof}
By the definitions of $J$ and $I$ in \eqref{JI}, we rewrite $r^H(q)$ as
$$r^H(q)=\frac{2J}{\lambda\chi_1\bar{u}_0}\frac{[(d+1)q+J]^2}{qI-J}=:\frac{2J}{\lambda\chi_1\bar{u}_0}f(q).$$
A simple calculation gives 
  $$f'(q)=\frac{[(d+1)q+J](d+1)I}{(qI-J)^2}\left(q-\frac{[2(d+1)+I]J}{(d+1)I}\right),$$
which immediately gives the monotonicity and the minimum.
\end{proof}

We now show the following stability/instability results.
\begin{lemma}\label{LSUS1*}
Let $\chi_1,\chi_2,\lambda,\mu,d,\bar{v}_0,\bar{u}_0$ be fixed, and $r_m^H:=r^H(\sigma_m)$ defined in \eqref{rhm}, the set $Q_0$ defined in \eqref{Q0*}. Assume \eqref{C-HOPF} holds. 
Then 
\begin{itemize}
\item[(i)]  $(\bar{u}_0,\bar{v}_0,w_c)$ is linearly stable if  $r<\min\limits_{m_0\in Q_0}\{r_{m_0}^H\}$;
\item[(ii)] $(\bar{u}_0,\bar{v}_0,w_c)$ is linearly unstable if $r>\min\limits_{m_0\in Q_0}\{r_{m_0}^H\}$.
\end{itemize}
\end{lemma}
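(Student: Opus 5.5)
Everything reduces, via Lemma \ref{criterion1}, to the sign of $P(r,\sigma_m)$ for each $m\in\mathbb{Z}^+$; I will read off that sign from the definition of $r^H$. Since $w_c=r/J$ with $J=\lambda(\bar{u}_0+\bar{v}_0)+\mu$ and $I=\chi_2\bar{v}_0-d-1>0$, the coefficients $B_2$ and $B_3$ in \eqref{P} are affine in $r$. Hence for fixed $q=\sigma_m>0$ I will write
\[
P(r,q)=2\big[(d+1)q+J\big]^2 J^{-1}\cdot J+\frac{\lambda\chi_1\bar{u}_0 r}{J}\big(J-qI\big),
\]
and confirm this by grouping the $r$-free terms ($2(d+1)^2q^2+4J(d+1)q+2J^2$) and the $r$-terms. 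So $P$ is affine in $r$, with slope $\lambda\chi_1\bar{u}_0(J-qI)/J$.

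I will then split the indices $m$ into two cases.

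\textbf{Case $m\notin Q_0$, i.e.\ $\sigma_m\le q_0=J/I$.} The slope is $\ge 0$, so $P(r,\sigma_m)\ge 2[(d+1)\sigma_m+J]^2>0$ for every $r>0$. These modes never destabilize.

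\textbf{Case $m\in Q_0$, i.e.\ $\sigma_m>q_0$.} The slope is strictly negative, and $P(r,\sigma_m)=0$ exactly when $r=r^H(\sigma_m)=r_m^H$; this is the definition \eqref{rhm}. Therefore $P(r,\sigma_m)>0$ if $r<r_m^H$ and $P(r,\sigma_m)<0$ if $r>r_m^H$.

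For (i), if $r<\min_{m_0\in Q_0}r^H_{m_0}$, then $P(r,\sigma_m)>0$ for every $m\in Q_0$ by the second case and for every other $m$ by the first. Lemma \ref{criterion1} then gives linear stability.

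For (ii), if $r>\min_{m_0\in Q_0}r^H_{m_0}$, I pick an index attaining (or nearly attaining) the minimum to get some $m_0\in Q_0$ with $r>r^H_{m_0}$. Then $P(r,\sigma_{m_0})<0$, and Lemma \ref{criterion1} gives instability.

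The main obstacle is making sure the minimum over $Q_0$ is well defined, since $Q_0$ is infinite. I will handle this with Lemma \ref{ddx}: $r^H(q)\to\infty$ as $q\to\infty$ and as $q\downarrow q_0$, and $\sigma_m\to\infty$. Hence only finitely many $r^H_m$ lie below any given level, so the infimum is attained. $Q_0$ is nonempty because $\sigma_m\to\infty$.

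Assumption \eqref{C-HOPF} is used only to ensure $I>0$, so that $q_0$ is meaningful. The algebraic identity for $P$ is routine.
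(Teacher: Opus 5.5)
Your proposal is correct and follows essentially the paper's proof. The paper likewise writes $P(r,\sigma_m)=2[(d+1)\sigma_m+J]^2+\frac{r\lambda\chi_1\bar u_0}{J}I(q_0-\sigma_m)$, gets $P>0$ for every $r>0$ when $\sigma_m\le q_0$, uses $r<r^H_{m}$ (resp.\ $r>r^H_{m^*}$) for the modes in $Q_0$, and concludes via Lemma \ref{criterion1}.

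The only difference is how the minimum over $Q_0$ is shown to exist. The paper uses the unimodality in Lemma \ref{ddx} to place the minimizer at an eigenvalue adjacent to $q_*$, which also yields \eqref{basic}, needed later for Theorem \ref{LHP}. Your argument uses $r^H(q)\to\infty$ as $q\to q_0^+$ and $q\to\infty$ together with discreteness of the spectrum, and is equally valid. Note that these limits follow directly from the explicit formula for $r^H$, not from Lemma \ref{ddx} as stated.
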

\begin{proof}
We first show that $\min\limits_{m_0\in Q_0}\{r_{m_0}^H\}$ exists. Note that the sequence $\{\sigma_m\}_{m=1}^\infty$ is increasing with respect to $m$ and satisfies $$\sigma_1>0 \ \  \mathrm{and}\ \lim\limits_{m\rightarrow\infty}\sigma_m=\infty.$$ Then there exists a unique integer $m_*\in\mathbb{Z}^+$ satisfying $\sigma_{m_*}>q_0$ such that $q_*\in[\sigma_{m_*}, \sigma_{m_*+1}]$ or $q_*<\sigma_{m_*}$. Hence, we deduce from Lemma \ref{ddx} that
\begin{equation}\label{min}
\min\limits_{m_0\in Q_0}\{r_{m_0}^H\}=
\begin{cases}
\min\{r^H(\sigma_{m_*}),r^H(\sigma_{m_*+1})\},&\text{if}~\sigma_{m_*}\neq q_*~\text{and}~\sigma_{m_*+1}\neq q_*\\[1.5mm]
r^H(q_*),&\text{if}~\sigma_{m_*}=q_*~\text{or}~\sigma_{m_*+1}=q_*,
\end{cases}
\end{equation}
which yields $\min\limits_{m_0\in Q_0}\{r_{m_0}^H\}$ exists and $\min\limits_{m_0\in Q_0}\{r_{m_0}^H\}\geq r^H(q_*).$ Then we follow from \eqref{gm} that for each $m_0\in Q_0$, it holds that
\begin{equation}\label{basic}
 r_{m_0}^H\geq \min\limits_{m_0\in Q_0}\{r_{m_0}^H\}\geq r^H(q_*)>\frac{4(d+1)[\lambda(\bar{u}_0+\bar{v}_0)+\mu]^2}{(\chi_2\bar{v}_0-d-1)\lambda\chi_1\bar{u}_0}.
\end{equation}
Now we prove the linear stability of $(\bar{u}_0,\bar{v}_0,w_c)$ if  $r<\min\limits_{m_0\in Q_0}\{r_{m_0}^H\}$. Denote the set
$$S:=\mathbb{Z}^+\backslash Q_0\subset \mathbb{Z}^+.$$
If $S\ne\emptyset$, then for each $m\in S$, one has $\sigma_m\leq q_0$. The definition of $P(r,\sigma_m)$ in \eqref{P} along with $\chi_2\bar{v}_0>d+1$ in \eqref{C-HOPF} gives
\begin{equation}\label{PL0}
P(r,\sigma_m)=2[\sigma_m(d+1)+\lambda(\bar{u}_0+\bar{v}_0)+\mu]^2+\frac{r\lambda\chi_1\bar{u}_0}{\lambda(\bar{u}_0+\bar{v}_0)+\mu}(\chi_2\bar{v}_0-d-1)(q_0-\sigma_m)>0.
\end{equation}
Moreover, for each $m_0\in Q_0$, the condition $r<\min\limits_{m_0\in Q_0}\{r_{m_0}^H\}$ also shows \eqref{PL0} holds. 

If $S=\emptyset$, this means that for each $m\in\mathbb{Z}^+$, we have $\sigma_m>q_0.$  Similarly, we also get \eqref{PL0}. Therefore, Lemma \ref{criterion1} implies Lemma \ref{LSUS1*}-(i).

We next show the linear instability $(\bar{u}_0,\bar{v}_0,w_c)$ if  $r>\min\limits_{m_0\in Q_0}\{r_{m_0}^H\}$. By \eqref{min}, we know for some $1\leq m^*$ satisfying $\sigma_{m^*}>q_0$ it holds that 
$$\min\limits_{m_0\in Q_0}\{r_{m_0}^H\}=r^H(\sigma_{m^*}).$$ 
Then for all $r>r^H(\sigma_{m^*})$, we derive $P(r,\sigma_{m^*})<0$, which together with Lemma \ref{criterion1} finishes the proof of Lemma \ref{LSUS1*}-(ii). In fact,  as long as $r>r_{m_0}^H$ for some $m_0\in Q_0$, then 
$$r>r_{m_0}^H\geq r^H(\sigma_{m^*}),$$ 
and one has $P(r,\sigma_{m_0})<0$, which also yields the linear instability of $(\bar{u}_0,\bar{v}_0,w_c)$.
\end{proof}

\subsection{Hopf bifurcation} In this subsection, we prove the existence of periodic solutions via Hopf bifurcation theory. We first identify the potential Hopf bifurcation points.
\begin{lemma}\label{imaginary}
Let $\chi_1,\chi_2,\lambda,\mu,d,\bar{v}_0,\bar{u}_0$ be fixed, and \eqref{C-HOPF} holds. Then the matrix $-\sigma_m\mathcal{A}+\mathcal{B}$ admits a pair of purely imaginary eigenvalues $ \rho=\pm i\xi$ if and only if  $r=r_{m_0}^H$ for some $m_0\in Q_0$. 
\end{lemma}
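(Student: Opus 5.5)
The argument is the classical one for cubic characteristic polynomials, written in terms of the quantities already defined. The characteristic equation \eqref{ef} has real coefficients, and \eqref{A123} gives $A_1,A_2,A_3>0$ for every $m\in\mathbb{Z}^+$.

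\emph{Forward direction.} Suppose $\rho=i\xi$ with $\xi\in\mathbb{R}$ is a root of \eqref{ef}. Since $A_3>0$, the value $\rho=0$ is not a root, so $\xi\neq0$. Separating real and imaginary parts gives
\[
-A_1\xi^2+A_3=0,\qquad -\xi^3+A_2\xi=0 .
\]
The second relation gives $\xi^2=A_2$, and then the first gives $A_1A_2-A_3=0$. By the factorization $A_1A_2-A_3=\sigma_mP(r,\sigma_m)$ with $\sigma_m>0$, this is equivalent to $P(r,\sigma_m)=0$.

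Next I show this forces $m\in Q_0$. If $\sigma_m\le q_0$, the rewriting \eqref{PL0} (valid under $\chi_2\bar v_0>d+1$, which is part of \eqref{C-HOPF}) shows $P(r,\sigma_m)>0$ for every $r>0$, a contradiction. Hence $\sigma_m>q_0$, i.e.\ $m\in Q_0$. For such $m$, $P(r,\sigma_m)$ is affine in $r$, because $w_c=r/J$ is linear in $r$. Its $r$-coefficient is $-\lambda\chi_1\bar u_0(\chi_2\bar v_0-d-1)(\sigma_m-q_0)/J<0$, so $P$ has a unique zero, namely $r=r^H(\sigma_m)=r_m^H$ from \eqref{rhm}. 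Thus $r=r_{m_0}^H$ with $m_0=m\in Q_0$.

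\emph{Converse.} Let $r=r_{m_0}^H$ with $m_0\in Q_0$. Then $P(r,\sigma_{m_0})=0$, hence $A_1A_2=A_3$, and the cubic factors as
\[
\rho^3+A_1\rho^2+A_2\rho+A_3=(\rho+A_1)(\rho^2+A_2).
\]
Since $A_2>0$, the roots are $\rho=\pm i\sqrt{A_2}$ and $\rho=-A_1<0$, which gives the required purely imaginary pair with $\xi=\sqrt{A_2}$.

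\emph{Where care is needed.} The main point is the interpretation of ``for some $m_0\in Q_0$'': it should mean that the purely imaginary eigenvalues belong to the matrix indexed by that same mode $m_0$. The argument gives this automatically, since $m_0=m$ in both directions.

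There is also a consistency check on \eqref{C-HOPF}: it should not exclude $r_{m_0}^H$. By \eqref{basic}, $r_{m_0}^H\ge r^H(q_*)$, which exceeds the threshold in \eqref{C-HOPF}, so no conflict arises. Finally, I would verify directly that the denominator of $r^H$ is positive precisely when $\sigma_m>q_0$, so that $r_m^H>0$ is well defined.
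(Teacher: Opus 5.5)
Your proposal is correct and follows essentially the same route as the paper. You obtain $A_1A_2=A_3$ by splitting the cubic at $\rho=i\xi$ into real and imaginary parts, whereas the paper uses Vieta's relations for assumed roots $\pm i\xi,\eta$; your version avoids presupposing that the third root is real and nonzero. From there both arguments conclude through the rewriting \eqref{PL0} of $P(r,\sigma_m)$ and the factorization $(\rho+A_1)(\rho^2+A_2)$. Your explicit checks, that $\sigma_m\le q_0$ forces $P>0$ and that $P$ is affine in $r$ with negative slope for $m\in Q_0$, spell out the step the paper compresses into ``which implies $r=r_{m_0}^H$ for $m_0\in Q_0$.''
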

\begin{proof}
Suppose that \eqref{ef} has eigenvalues $-i\xi$, $i\xi$ and $\eta$ with $\xi,\eta\in\mathbb{R}\setminus\{0\}$. Then the Routh-Hurwitz criterion yields
$$ A_1=-( \rho_1+ \rho_2+ \rho_3)=-\eta>0,\ A_2= \rho_1 \rho_2+ \rho_1 \rho_3+ \rho_2 \rho_3=\xi^2>0,\ A_3=- \rho_1 \rho_2 \rho_3=-\xi^2\eta>0,$$
which implies $\eta<0$ and $A_1A_2-A_3=0$. This along with $m\in\mathbb{Z}^+$ gives
\begin{equation*}
P(r,\sigma_m)=2[\sigma_m(d+1)+\lambda(\bar{u}_0+\bar{v}_0)+\mu]^2+\frac{r\lambda\chi_1\bar{u}_0}{\lambda(\bar{u}_0+\bar{v}_0)+\mu}(\chi_2\bar{v}_0-d-1)(q_0-\sigma_m)=0,
\end{equation*}
which implies $r=r_{m_0}^H$ for $m_0\in Q_0$.

On the other hand, if $r=r_{m_0}^H$ for some $m_0\in Q_0$, then we obtain $A_1A_2-A_3=0$ for some $m_0\in Q_0$. Thus \eqref{ef} can be rewritten as
$$( \rho+A_1)( \rho^2+A_2)=0,$$
which gives 
\begin{equation}\label{IM}
 \rho_1=-A_1<0,\ \  \rho_2=\pm i\sqrt{A_2}.
\end{equation}
Moreover, there is no eigenvalue with the form $ki\sqrt{A_2}$ for $k\in \mathbb{Z}\backslash\{\pm1\}$. The proof of Lemma \ref{imaginary} is complete.
\end{proof}

The following lemma verifies the transversality condition at the potential Hopf bifurcation point $r_{m_0}^H$.
\begin{lemma}\label{TC}
Let the assumptions in Lemma \ref{LSUS1*} hold, and $m_0\in Q_0$ with $Q_0$ defining in \eqref{Q0*}. Then for $r$ near $r_{m_0}^H$, the matrix $-\sigma_m\mathcal{A}+\mathcal{B}$ admits a unique eigenvalue $\operatorname{Re}[ \rho(r)]+i\operatorname{Im}[ \rho(r)]$ such that 
$$\operatorname{Re}[ \rho(r_{m_0}^H)]=0,\ \operatorname{Im}[ \rho(r_{m_0}^H)]>0,~\text{and}~ \frac{d\operatorname{Re}[ \rho(r)]}{dr}\bigg|_{r=r_{m_0}^H}>0.$$
\end{lemma}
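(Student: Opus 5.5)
The argument works with the cubic \eqref{ef} at the fixed mode $m=m_0$ and treats $r$ as the parameter; $w_c=r/J$ depends linearly on $r$, so $A_2$ and $A_3$ are affine in $r$ and $A_1$ does not depend on $r$.

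At $r=r_{m_0}^H$, Lemma \ref{imaginary} and \eqref{IM} give the roots $-A_1<0$ and $\pm i\sqrt{A_2}$. These three roots are simple: $A_1>0$, and $A_2>0$ by \eqref{A123}, so they are distinct. The coefficients are smooth in $r$, so the implicit function theorem gives a unique smooth branch $\rho(r)$ with $\rho(r_{m_0}^H)=i\sqrt{A_2}$. This yields $\operatorname{Re}\rho(r_{m_0}^H)=0$ and $\operatorname{Im}\rho(r_{m_0}^H)=\sqrt{A_2}>0$. Uniqueness near the imaginary axis holds because the third root stays near $-A_1<0$.

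For transversality I differentiate \eqref{ef} implicitly:
\[
\rho'(r)\bigl(3\rho^2+2A_1\rho+A_2\bigr)=-\bigl(A_1'\rho^2+A_2'\rho+A_3'\bigr).
\]
Here $'$ means $d/dr$, so $A_1'=0$. Set $\rho=i\omega$ with $\omega=\sqrt{A_2}$. Then $3\rho^2+2A_1\rho+A_2=-2A_2+2iA_1\omega$, and the right-hand side is $-(A_3'+iA_2'\omega)$. Multiplying by the conjugate of the left factor gives
\[
\operatorname{Re}\rho'=\frac{2A_2A_3'-2A_1A_2'A_2}{4A_2^2+4A_1^2A_2}
=\frac{A_3'-A_1A_2'}{2(A_2+A_1^2)}
=-\frac{\partial_r(A_1A_2-A_3)}{2(A_2+A_1^2)}.
\]
The sign of $\operatorname{Re}\rho'$ is therefore opposite to that of
\[
\partial_r(A_1A_2-A_3)=\sigma_{m_0}\,\partial_r P(r,\sigma_{m_0}).
\]

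By \eqref{PL0}, $P$ is affine in $r$, with slope $\frac{\lambda\chi_1\bar u_0}{J}\,I\,(q_0-\sigma_{m_0})$. This slope is strictly negative, because $I>0$ under \eqref{C-HOPF} and $\sigma_{m_0}>q_0$ since $m_0\in Q_0$. Hence $\frac{d\operatorname{Re}\rho}{dr}\big|_{r=r_{m_0}^H}>0$.

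The only delicate step is the sign bookkeeping in the implicit-differentiation formula. To guard against an error there, I will also check the sign against Lemma \ref{LSUS1*}: $P$ changes from positive to negative as $r$ crosses $r_{m_0}^H$, so the branch must move from the stable to the unstable half-plane, which matches the computed sign.
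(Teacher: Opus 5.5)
Your argument is correct and is essentially the paper's. The paper differentiates the Vieta relations for the roots $\sigma(r)\pm i\xi(r)$, $\eta(r)$ rather than implicitly differentiating the cubic, but it arrives at the same formula, $\frac{d\operatorname{Re}\rho}{dr}\big|_{r=r_{m_0}^H}=-\frac{A_1\ell_0+\ell_1}{2(A_1^2+A_2)}$ with $\ell_0=\partial_r A_2$ and $\ell_1=-\partial_r A_3$, which is your $\frac{\partial_r A_3-A_1\partial_r A_2}{2(A_2+A_1^2)}$, and signs it through $I>0$ and $\sigma_{m_0}>q_0$ exactly as your $r$-slope of $P$ from \eqref{PL0} does (your implicit-function-theorem justification of the branch is slightly more careful than the paper's appeal to continuous dependence).
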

\begin{proof}
By Lemma \ref{imaginary} and the continuous dependence of eigenvalues on the parameter $r$, there exists a complex eigenvalue of the form $\sigma(r)+i\xi(r)$ satisfying $$\sigma(r_{m_0}^H)=0\ \ \text{and}\ \ \xi(r_{m_0}^H)>0.$$
Denote other two roots of \eqref{ef} by $\sigma(r)-i\xi(r)$ and $\eta(r)$. Then the Routh-Hurwitz criterion yields
\begin{equation}\label{B123-RH}
-A_1=2\sigma(r)+\eta(r),\ A_2=2\sigma(r)\eta(r)+\sigma^2(r)+\xi^2(r),-A_3=\eta(r)[\sigma^2(r)+\xi^2(r)].
\end{equation}
Differentiating \eqref{B123-RH} with respect to $r$ and applying \eqref{A123} give
\begin{equation}\label{B123-RH1}
\begin{split}
&2\sigma'(r)+\eta'(r)=0,\\
&2\sigma(r)\eta'(r)+2\sigma'(r)\eta(r)+2\sigma(r)\sigma'(r)+2\xi(r)\xi'(r)=\ell_0>0,\\
&2\eta(r)[\sigma(r)\sigma'(r)+\xi(r)\xi'(r)]+\eta'(r)[\sigma^2(r)+\xi^2(r)]=\ell_1<0,
\end{split}
\end{equation}
where
\begin{equation}\label{ell12}
\ell_0:=\frac{\sigma_m\lambda\chi_1\bar{u}_0}{\lambda(\bar{u}_0+\bar{v}_0)+\mu},\ \ell_1:=-\frac{\sigma_m^2\lambda\chi_1\bar{u}_0(1+\chi_2\bar{v}_0)}{\lambda(\bar{u}_0+\bar{v}_0)+\mu}.
\end{equation}
Given $\sigma(r_{m_0}^H)=0$, we deduce from \eqref{B123-RH} that $\eta(r_{m_0}^H)=-A_1<0$ and $\xi^2(r_{m_0}^H)=A_2>0.$ At $r=r_{m_0}^H$, we follow from \eqref{B123-RH1} that
\begin{equation*}
2\sigma'(r)\eta(r)+2\xi(r)\xi'(r)=\ell_0\ \ \mathrm{and}\ \ 2\eta(r)\xi(r)\xi'(r)-2\sigma'(r)\xi^2(r)=\ell_1<0,
\end{equation*}
which implies 
$$ 2\sigma'(r)[\eta^2(r)+\xi^2(r)]=\ell_0\eta(r)-\ell_1\ \text{at}~r=r_{m_0}^H.$$
This along with the fact $\eta(r_{m_0}^H)=-A_1<0$ and $\xi^2(r_{m_0}^H)=A_2>0$ yields 
$$\sigma'(r)\big|_{r=r_{m_0}^H}=-\frac{A_1\ell_0+\ell_1}{2(A_1^2+A_2)}.$$
By the definitions of $\ell_0$, $\ell_1$ in \eqref{ell12}, and the conditions \eqref{C-HOPF} and $\sigma_{m_0}>q_0$,    we obtain
$$-(A_1\ell_0+\ell_1)=\frac{\sigma_{m_0}\lambda\chi_1\bar{u}_0(\chi_2\bar{v}_0-d-1)}{\lambda(\bar{u}_0+\bar{v}_0)+\mu} (\sigma_{m_0}-q_0)>0.$$
Therefore, $\sigma'(r_{m_0}^H)>0.$ We complete the proof of Lemma \ref{TC}.
\end{proof}
\begin{proof}[{\bf Proof of Theorem \ref{LHP}}] The stability and instability results directly follow from Lemma \ref{LSUS1*}. In the sequel, we show the existence of Hopf bifurcations. When $r=r_j^H$, it follows from \eqref{basic} that
 $$ r>\frac{4(d+1)[\lambda(\bar{u}_0+\bar{v}_0)+\mu]^2}{(\chi_2\bar{v}_0-d-1)\lambda\chi_1\bar{u}_0}.$$
By Lemma \ref{imaginary}, the matrix $(-\sigma_j\mathcal{A}+\mathcal{B})$ has a pair of purely imaginary eigenvalues $\pm i\xi_0$ at $r=r_j^H$, and $\xi_0=\sqrt{A_2}$ (see \eqref{IM}). Let $E_j^\pm\ne (0,0,0)$ be the associated eigenvectors: 
$$(-\sigma_j\mathcal{A}+\mathcal{B})E_j^\pm=\pm i\xi_0E_j^\pm,$$
Then $\pm i\xi_0$ are also purely imaginary eigenvalues  of \eqref{le} at  $r=r_j^H$, with corresponding eigenfunction $E_j^\pm z_j(x)$. Here $z_j(x)$ is the eigenfunction corresponding to $\sigma_j$.

By the assumption $\sigma_j>q_*$, we deduce from Lemma \ref{ddx} that $r_m^H\not=r_j^H$ if $m\not= j$. And note the conditions that $\sigma_j$ is simple eigenvalue of $-\Delta$ under Neumann boundary conditions, then $\pm i\xi_0$ are simple eigenvalues for the linearized eigenvalue problem \eqref{le}.

Moreover, the proof of Lemma \ref{imaginary} shows that $(-\sigma_j\mathcal{A}+\mathcal{B})$ has no  eigenvalues with the form $ki\sqrt{A_2}$ for $k\in \mathbb{Z}\backslash\{\pm1\}$, hence neither does \eqref{le}. 

Finally, Lemma \ref{TC} implies that for $r$ near $r_{j}^H$, \eqref{le} has a unique eigenvalue $\operatorname{Re}[\rho(r)]+i\operatorname{Im}[\rho(r)]$ satisfying 
$$\operatorname{Re}[\rho(r_j^H)]=0,\ i\operatorname{Im}[\rho(r_j^H)]=i\xi_0,~\text{and}~ \frac{d\operatorname{Re}[\rho(r)]}{dr}\bigg|_{r=r_j^H}\ne0.$$
Therefore, applying \cite[Theorem 1]{Amann-Hopf-1990}(or see \cite[Theorem 6.1]{Liu-Shi-Wang-DCDSB}) directly yields the desired results. 
\end{proof}

\section{Numerical simulations}\label{NS}
In Section \ref{timeperiodic} and Section \ref{HP}, we study the non-constant positive time-periodic solution in homogeneous environments and heterogeneous settings, respectively. Specifically, in Section \ref{timeperiodic}, we first use the topological degree theory to establish the existence of non-constant positive time-periodic solutions when the resource renewal rate $r(x,t)$ is non-constant time-periodic, then  prove their uniqueness and  global asymptotic stability via coupled energy estimates when $r_*$ is small. In Section \ref{HP}, we establish the existence of non-constant time-periodic solution in homogeneous environments by Hopf bifurcation theory. However, there still remain some interesting problems worthy of further study:
\begin{itemize}
\item[(Q1)] For  non-constant time-periodic function $r(x,t)$, do the periodic solutions remain globally stable if $r_*$ is not small?
\item[(Q2)] For a constant $r(x,t)$, how about the global/local dynamics of the periodic solution induced by Hopf bifurcation?
\item[(Q3)] For a spatially heterogeneous $r(x,t)\equiv r(x)$, do periodic solutions exist besides non-constant steady states? If so, what mechanism triggers them?

\end{itemize}
In this section, we shall numerically explore these problems. For this purpose, in all simulations,  we choose $\Omega=(0,3\pi)$ and set
\begin{equation}\label{pv}
\chi_1=d=\lambda=\mu=1,\ \chi_2=18,\ \bar{u}_0=\bar{v}_0=1.
\end{equation}

\subsection{Time-periodic environments} 
\begin{figure}[t]
\centering
\begin{minipage}{0.8\textwidth}
\includegraphics[width=\linewidth]{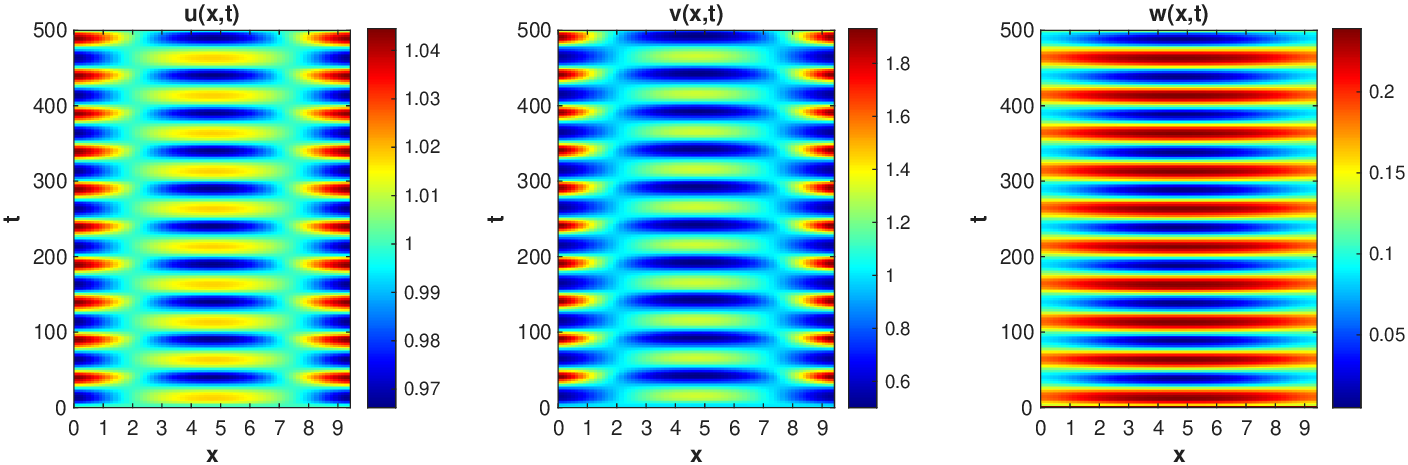}  
\end{minipage}  
\begin{minipage}{0.8\textwidth}
\includegraphics[width=\linewidth]{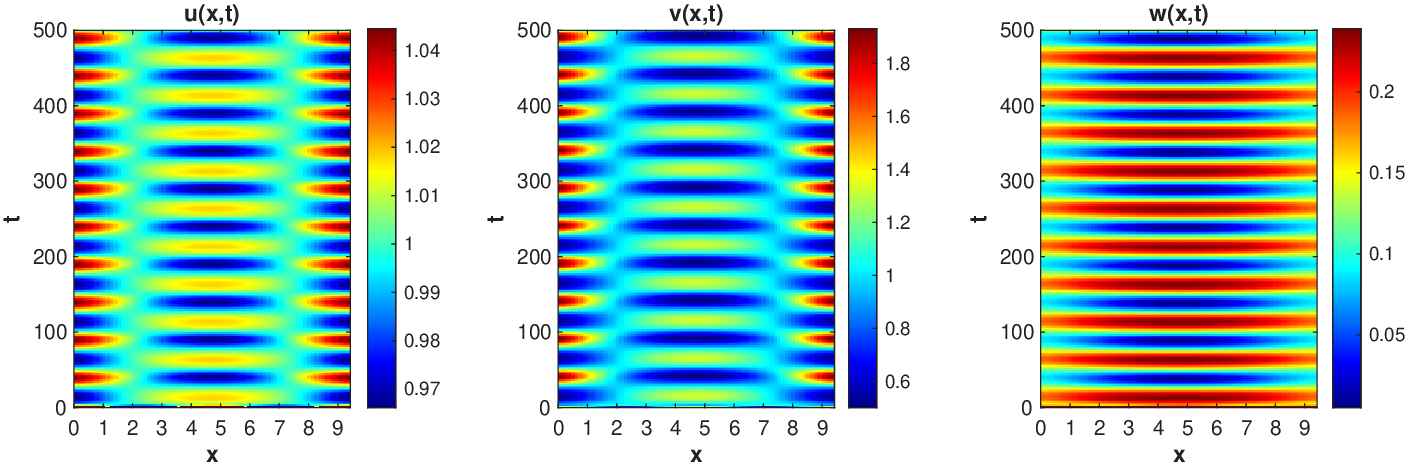}  
\end{minipage}
\captionsetup{width=\textwidth}
\caption{Numerical simulations of  solution profiles of \eqref{system} with $ r_*=0.8$ and other parameters set as in \eqref{pv}. The initial data $(u_0,v_0,w_0)$ is taken as: $(1, 1, 0.8/3)$ for the first row; $(1+0.3\cos(\frac{4 x}{3}), 1+0.3\cos(\frac{4 x}{3}), 0.8/3+5)$ for the second row.} \label{fig-rxt-rs}
\end{figure}
\begin{figure}[htbp]
\centering
\begin{minipage}{0.8\textwidth}
\includegraphics[width=\linewidth]{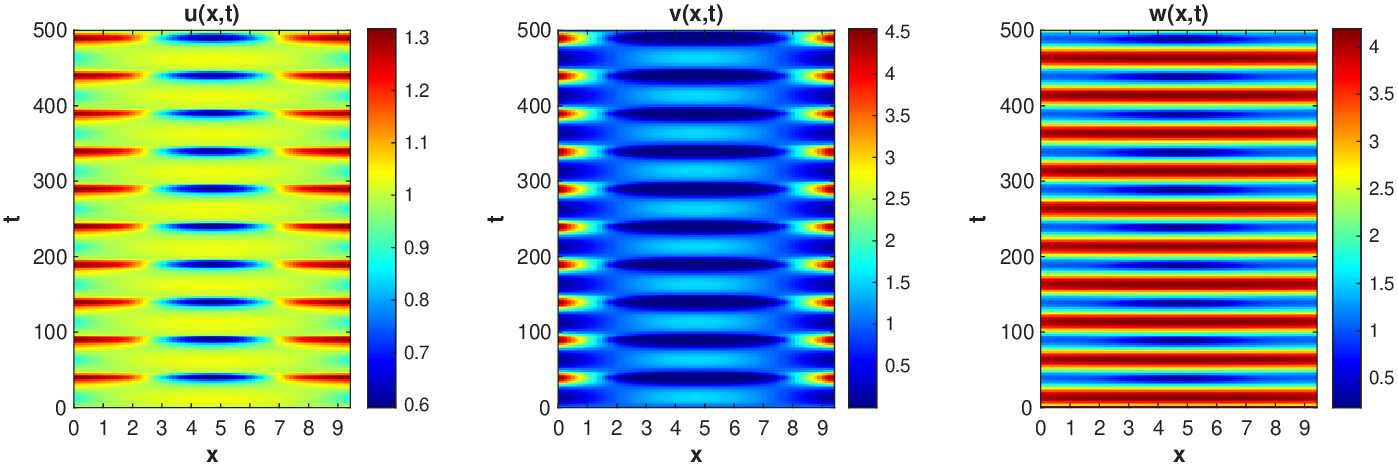}  
\end{minipage}  
\begin{minipage}{0.8\textwidth}
\includegraphics[width=\linewidth]{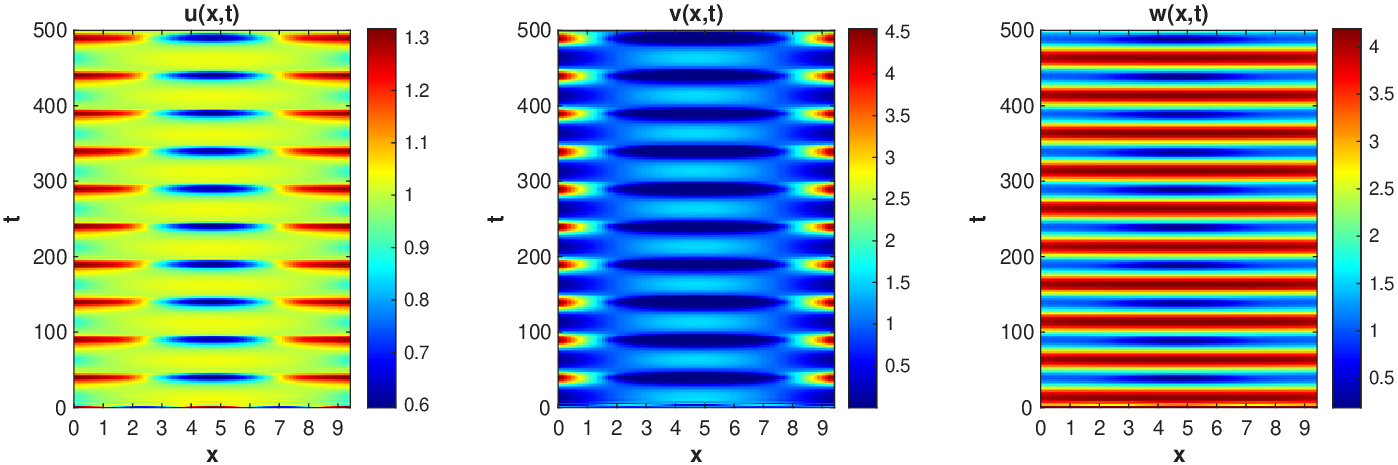}  
\end{minipage} 
\captionsetup{width=\textwidth}
\caption{Numerical simulations of  solution profiles of \eqref{system} with $ r_*=15$ and other parameters set as in \eqref{pv}. The initial data $(u_0,v_0,w_0)$ is taken as: $(1, 1, 5)$ for the first row; $(1+0.3\cos(\frac{4 x}{3}), 1+0.3\cos(\frac{4 x}{3}), 10)$ for the second row.} \label{fig-rxt-rl}
\end{figure}

We first verify the uniqueness and global stability of the non-constant periodic solution in Theorem \ref{GBSps} when $r_*$ is small, then we address  the problem (Q1). Our findings numerically give a positive answer to (Q1). To this end, we consider the following function:
$$r(x,t)=\frac{r_*}{2}\bigg(1+\frac{4\sin (\pi t/25)(3\pi x-x^2)}{9\pi^2}\bigg).$$
As shown in the first row of Figure \ref{fig-rxt-rs}, for small $r_*=0.8$,  the solution initiated from $(1,1,0.8/3)$ develops into a periodic solution. To further examine its attractivity, we also perform simulations with several other initial data satisfying \eqref{pv}, all of which eventually approach the periodic solution displayed in the first row. For brevity, we present only the simulation initiated from  $\bigl(1+0.3\cos\tfrac{4x}{3},\; 1+0.3\cos\tfrac{4x}{3},\; \tfrac{0.8}{3}+5\bigr)$, shown in the second row of Figure \ref{fig-rxt-rs}. These observations are consistent with Theorem~\ref{GBSps}. For large $r_*=15$, we observe a similar phenomenon. Solutions originating from a variety of  initial data satisfying \eqref{pv} all converge to a single  periodic solution; two examples are displayed in Figure \ref{fig-rxt-rl}. Motivated by these observations, we may conjecture that for any fixed $r(x,t)$ satisfying (H0), the non-constant positive periodic solution is globally asymptotically stable.

\subsection{Homogeneous environments (i.e., constant $r$)}\label{NRH}
\begin{figure}[t!]
\centering
\begin{minipage}[b]{0.96\textwidth}
    \includegraphics[width=\linewidth]{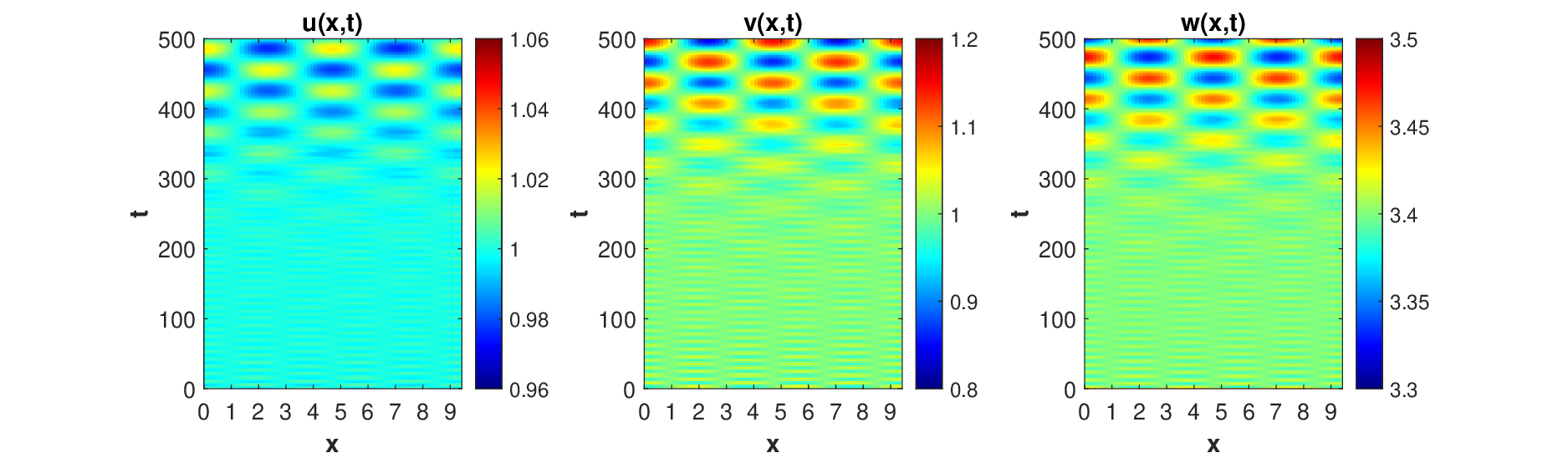}
\end{minipage}
\begin{minipage}[b]{0.96\textwidth}
    \includegraphics[width=\linewidth]{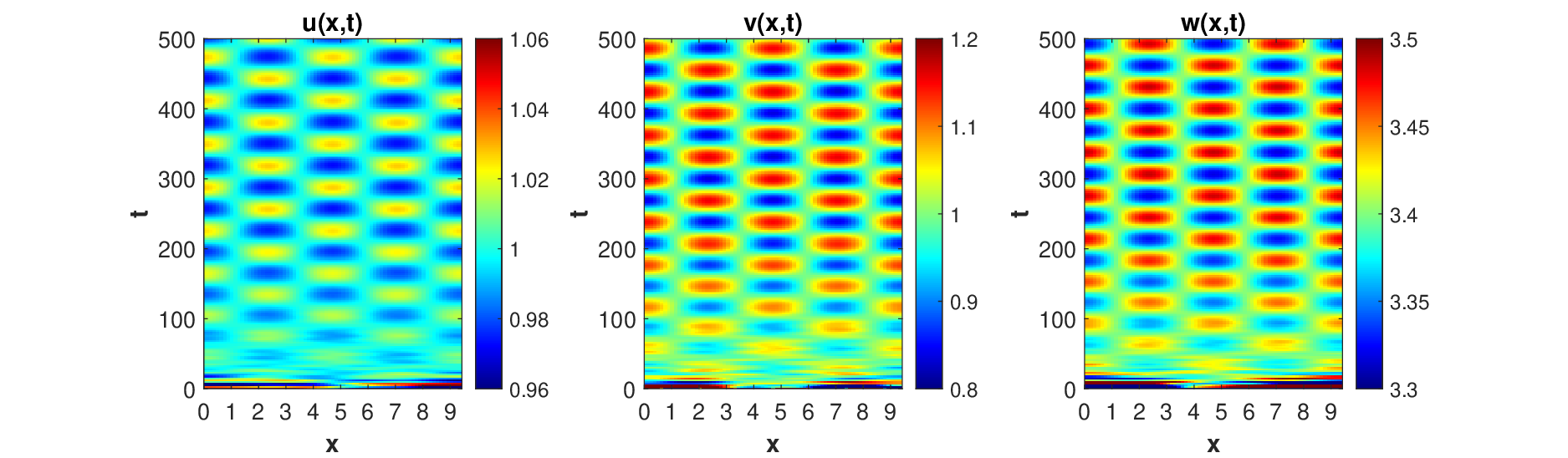}
\end{minipage}
\captionsetup{width=\textwidth}
\caption{Numerical simulations of  solution profiles of \eqref{system} with $ r=10.2$ and other parameters set as in \eqref{pv}. The initial data $(u_0,v_0,w_0)$ is taken as:  $(1+0.01\cos(\frac{5 x}{3}), 1+0.01\cos(\frac{5 x}{3}), \frac{10.3}{2}+ 0.01\cdot\text{rand})$ for the first row; $(1.2-\frac{0.4x}{3\pi}, 1+0.9\cos(\frac{5 x}{3}),xe^{-(x-1)^2})$ for the second row.} 
\label{fig-rc=10.2}
\end{figure}
\begin{figure}[t!]
\centering
\begin{minipage}[b]{0.96\textwidth}
    \includegraphics[width=\linewidth]{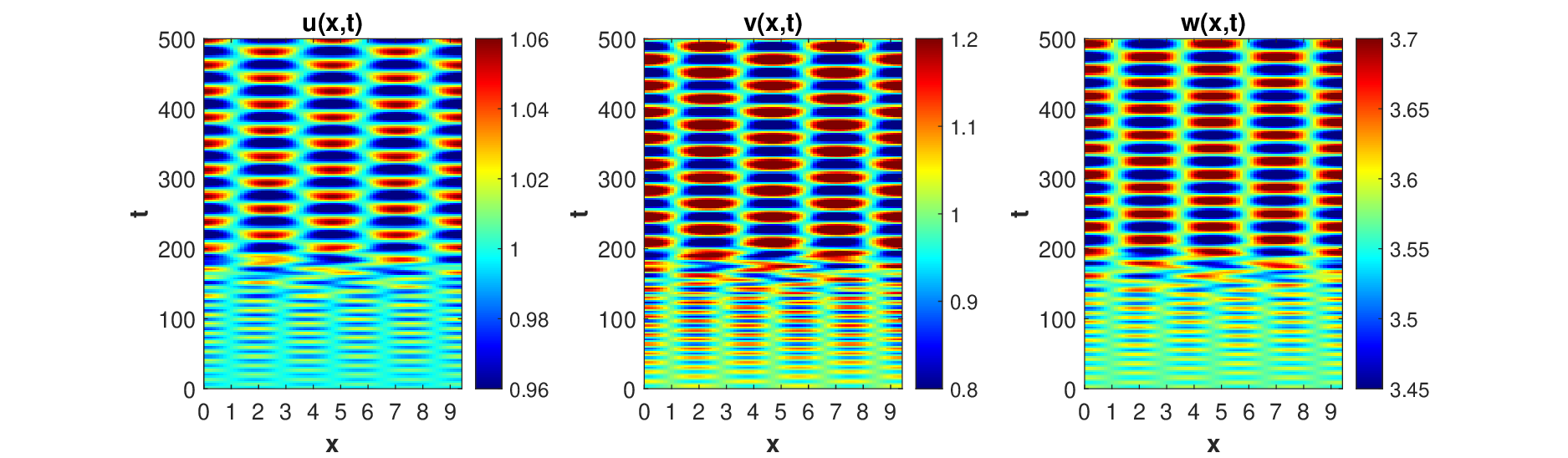}
\end{minipage}
\begin{minipage}[b]{0.96\textwidth}
    \includegraphics[width=\linewidth]{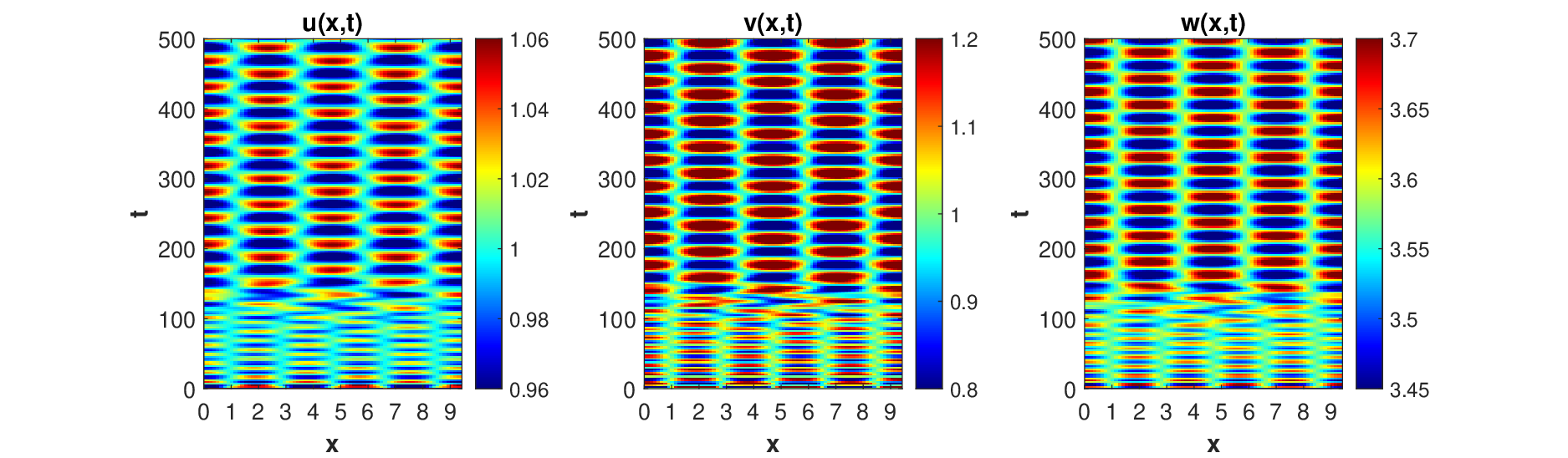}
\end{minipage}
\captionsetup{width=\textwidth}
\caption{Numerical simulations of  solution profiles of \eqref{system} with $ r=10.7$ and other parameters set as in \eqref{pv}. The initial data $(u_0,v_0,w_0)$ is taken as:  $(1+0.01\cos(\frac{5 x}{3}), 1+0.01\cos(\frac{7x}{3}), \frac{10.7}{2}+ 0.01\cdot\text{rand})$ for the first row; $(1+0.9\cos(\frac{5 x}{3}), 1,\text{rand})$ for the second row.} 
\label{fig-rc=10.7}
\end{figure}

\begin{figure}[h!]
\centering
\begin{minipage}[b]{0.96\textwidth}
    \includegraphics[width=\linewidth]{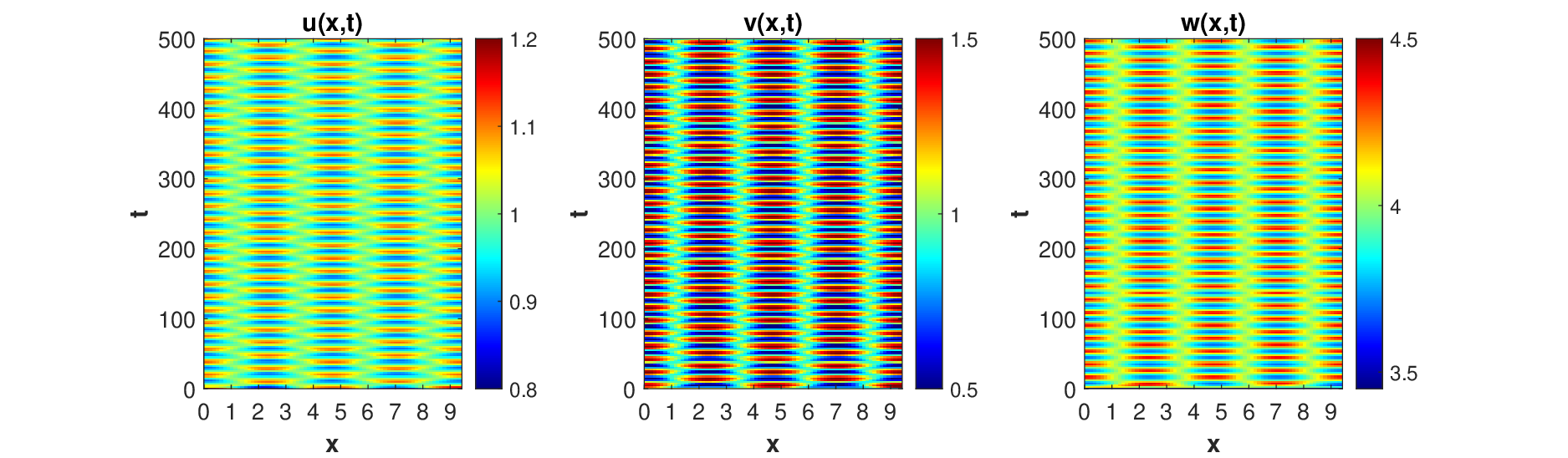}  
\end{minipage} 
\begin{minipage}[b]{0.96\textwidth}
    \includegraphics[width=\linewidth]{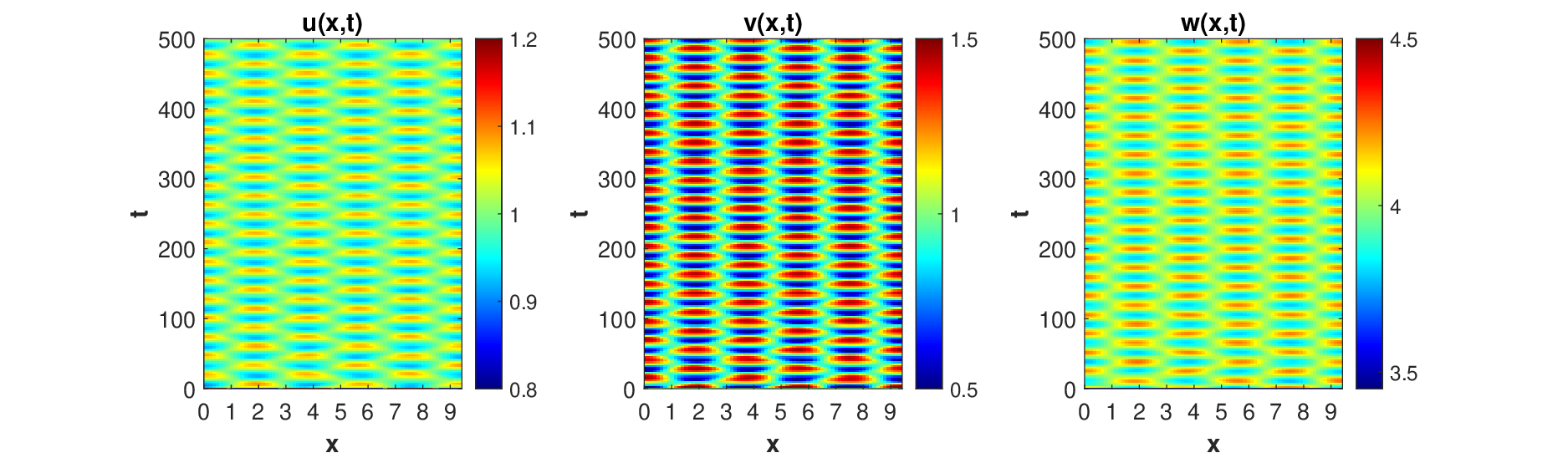}  
\end{minipage}
\captionsetup{width=\textwidth}
\caption{Numerical simulations of  solution profiles of \eqref{system} with $ r=12$ and other parameters set as in \eqref{pv}. The initial data $(u_0,v_0,w_0)$ is taken as $(1+0.2\cos(\frac{4 x}{3}), 1.2-\frac{0.4x}{3\pi}, \frac{x}{3})$ for the first row; $(1+0.9\cos(\frac{5 x}{3}), 1,e^{-(x-1)^2}(x+0.1))$ for the second row.} \label{bi}
\end{figure}

This subsection first examines spatiotemporal patterns in the parameter regimes identified in Theorem \ref{LHP}, then investigates global/local of periodic patterns as stated in problem (Q2). Interestingly, we find that the global attractivity and bistability of periodic solutions exhibit sensitive dependence on the parameter $r$, which sharply differs from the scenario in time-periodic environments.

It follows from \eqref{pv} that  $\sigma_m=\frac{m^2}{9}$ and $\bar{u}_0=\bar{v}_0=1$. Thus, by \eqref{rhm}, we obtain
\begin{equation}\label{cv}
r_m^H=\frac{3(\frac{8m^4}{9}+24m^2+162)}{16m^2-27},
\end{equation}
and
$$q_0=\frac{\lambda(\bar{u}_0+\bar{v}_0)+\mu}{\chi_2\bar{v}_0-1-d}=\frac{3}{16}, \ q_*=\frac{[2(d+1)+\chi_2\bar{v}_0-d-1][\lambda(\bar{u}_0+\bar{v}_0)+\mu]}{(d+1)(\chi_2\bar{v}_0-d-1)}=\frac{15}{8},$$
which implies the sets
$$Q_0:=\{m_0\in\mathbb{Z}^+|\sigma_{m_0}>q_0\}=\{2,3,4,\cdots\},\ Q_*:=\{j\in\mathbb{Z}^+|\sigma_{j}>q_*\}=\{5,6,7,\cdots\}.$$
By Lemma \ref{LSUS1*} and \eqref{cv}, we get
$$\min\limits_{m_0\in Q_0}\{r_{m_0}^H\}=\min\{r^H(\sigma_{4}),r^H(\sigma_{5})\}=r^H(\sigma_{4}), \ \min\limits_{j\in Q_*}\{r_{j}^H\}=r^H(\sigma_{5}),$$
hence the critical value classifying for stability and instability is $r^H(\sigma_{4})\approx 10.13$. And it follows from Theorem \ref{LHP} that $r_5^H\approx 10.60$, $r_6^H\approx 11.90$, $r_7^H\approx 13.76$, $r_8^H\approx 16.06$ $\cdots$ are bifurcation points.

As shown in the first row of Figure \ref{fig-rc=10.2}, a small perturbation of $(1,1,10.2/3)$ with $r=10.2>r_4^H\approx 10.13$ evolves into a stable time‑periodic solution  after $t=300$. This suggests that the condition $\sigma_j\geq q_*$ in Theorem \ref{LHP} may be a sufficient condition, and  $r_4^H$ is the actual minimal Hopf bifurcation point that sharply separates stability and instability. Moreover, simulations initialized with far-from-equilibrium $(1.2-\frac{0.4x}{3\pi}, 1+0.9\cos(\frac{5x}{3}),xe^{-(x-1)^2})$ also converge to the same periodic profile as shown in the second row in Figure \ref{fig-rc=10.2}. Additional simulations with different initial data exhibit the same asymptotic periodic behavior, although the time required to reach the periodic state may vary. For example, starting from the initial state \((1+0.9\cos\frac{5x}{3},\,1+0.6\cos\frac{5x}{3},\,2.6)\), the solution approaches the same periodic profile after approximately after \(t=3000\) (figures are omitted for brevity). These numerical findings provide further evidence for the global attractivity of the underlying periodic solution.

For $r=10.7$, slightly above the theoretical Hopf point $r_5^H\approx 10.60$ predicted by Theorem \ref{LHP}, a small perturbation of the steady state  $(1,1,10.7/2)$ evolves into a stable periodic pattern (first row, Figure \ref{fig-rc=10.7}), which is consistent with theoretical bifurcation analysis. Moreover, simulation initiated from several distinct initial data all asymptotically approach the same periodic solution. One representative example is shown in the second row, where $(u_0,v_0,w_0)=\bigl(1+0.9\cos\frac{5x}{3},1, \text{rand}\bigr)$. This observation provides further numerical evidence for the global attractivity of the periodic solution.

By contrast, increasing the parameter to $r=12>r_6^H\approx 11.9$ leads to bistability, indicating  the loss of global attractivity (Figure \ref{bi}). Specifically, as illustrated in the first row of Figure \ref{bi}, the solution initiated from $(1+0.2\cos(\frac{4 x}{3}), 1.2-\frac{0.4x}{3\pi}, \frac{x}{3})$ converges to a five-peaked periodic solution, whereas the solution initiated from $(1+0.9\cos(\frac{5 x}{3}), 1,e^{-(x-1)^2}(x+0.1))$ evolves toward a distinct six-peaked periodic solution.

\subsection{Spatially heterogeneous environments (i.e., $r(x,t)=r(x)$)} 
\begin{figure}[t!]
\centering
\begin{minipage}[b]{0.96\textwidth}
    \includegraphics[width=\linewidth]{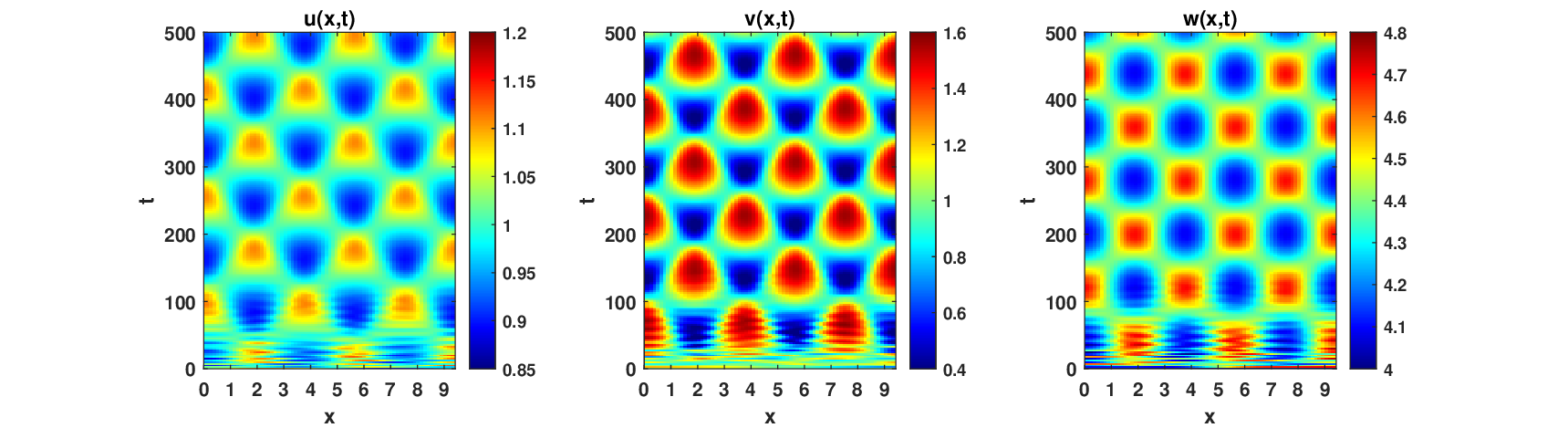}  
    \subcaption{$r(x)=13.08$}
\end{minipage} 
\begin{minipage}[b]{0.96\textwidth}
    \includegraphics[width=\linewidth]{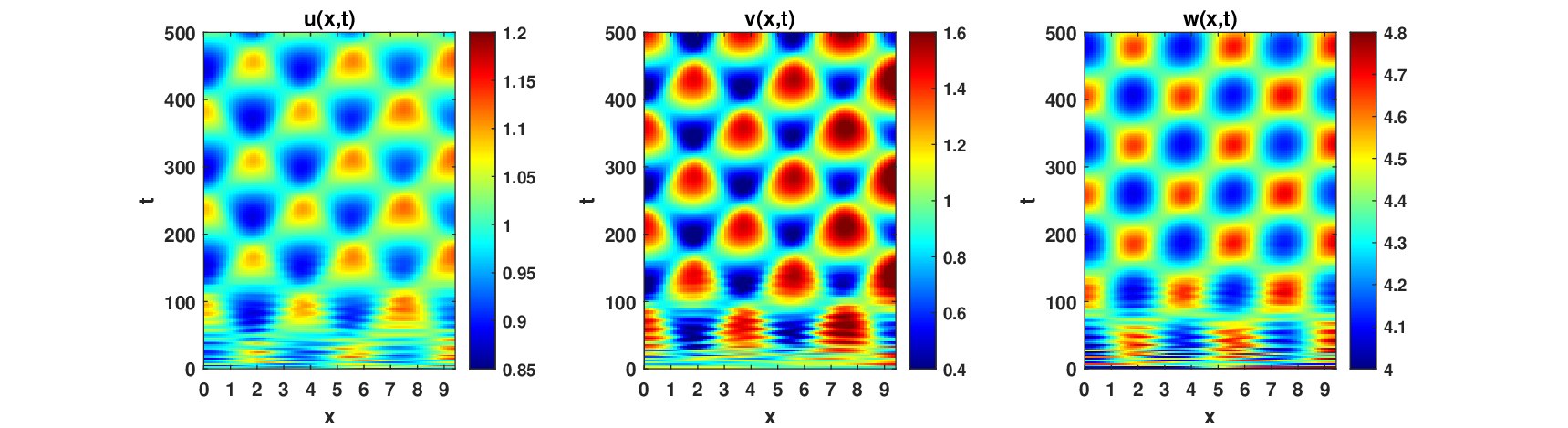}  
    \subcaption{$r(x)=r_1(x)$}
\end{minipage}  
\begin{minipage}[b]{0.96\textwidth}
    \includegraphics[width=\linewidth]{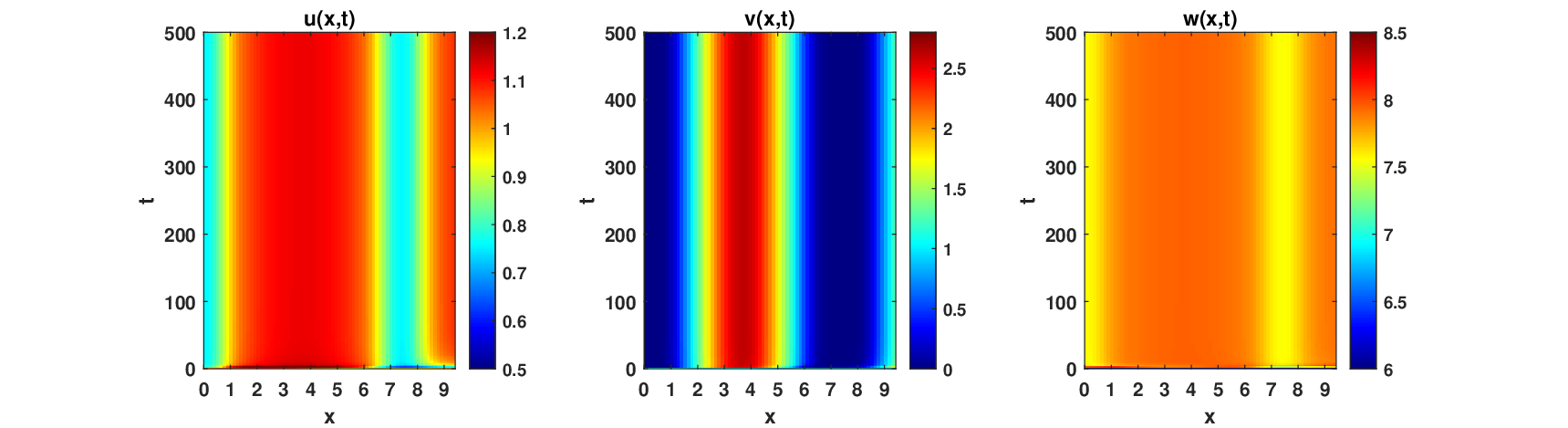}  
    \subcaption{$r(x)=r_2(x)$}
\end{minipage}  
\captionsetup{width=\textwidth}
\caption{Numerical simulations of  solution profiles of \eqref{system} and $r_1(x), r_2(x)$ are taken in \eqref{r12}. The other parameters set as in \eqref{pv}, and initial data $(u_0,v_0,w_0)$ is taken as $(1+0.1\sin(\frac{2 x}{3}), 1+0.1\sin(\frac{2 x}{3}), \frac{8}{3})$.} \label{fig-sp}
\end{figure}
\begin{figure}[t!]
\centering
\begin{subfigure}[b]{0.32\textwidth}
    \includegraphics[width=\linewidth]{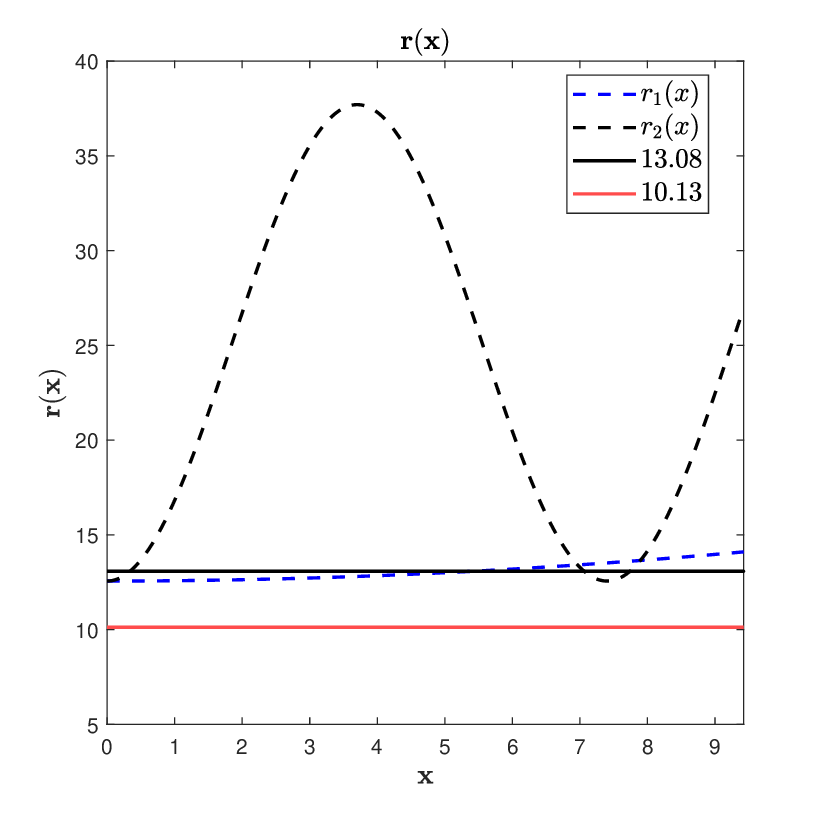}
    \caption{}
\end{subfigure}
\hfill
\begin{subfigure}[b]{0.32\textwidth}
    \includegraphics[width=\linewidth]{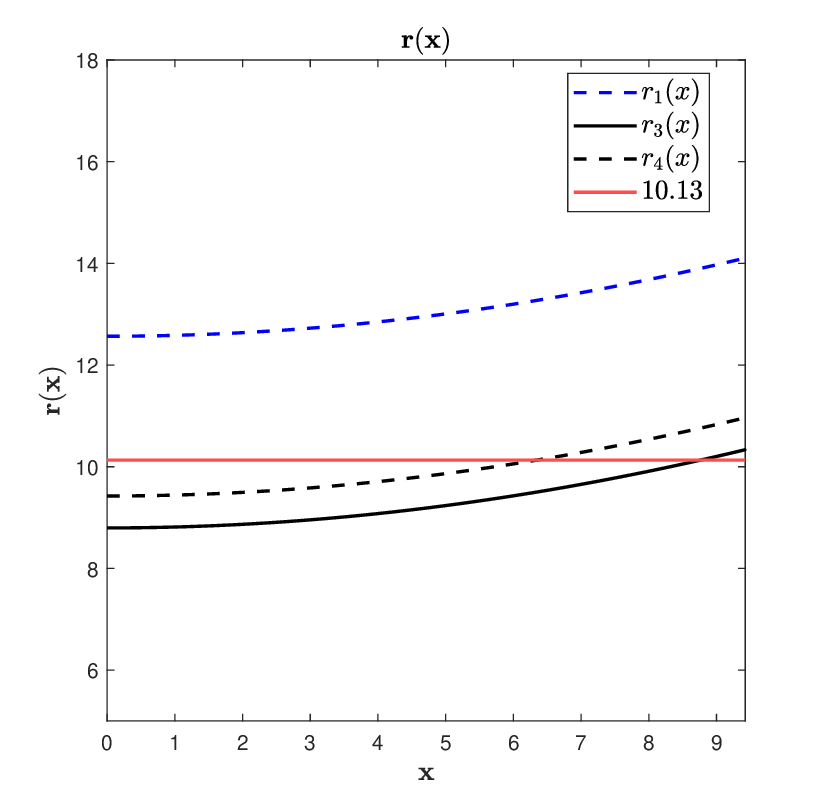}
    \caption{}
\end{subfigure}
\hfill
\begin{subfigure}[b]{0.32\textwidth}
    \includegraphics[width=\linewidth]{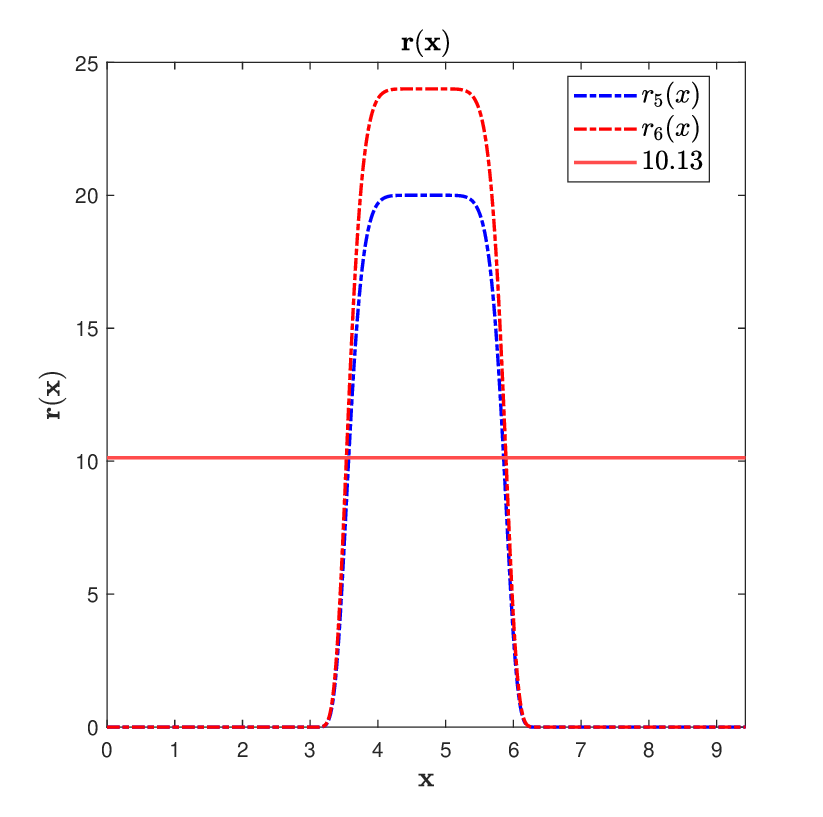}
    \caption{}
\end{subfigure}
\caption{Spatial profiles of the resource input functions $r_1(x)-r_6(x)$.}\label{5678}
\end{figure}

Unlike the case of time-periodic resource renewal $r(x,t)$, the system \eqref{system} with $r(x)$ lacks external time-periodic forcing. Compared with the homogeneous constant-resource case, the system \eqref{system} with $r(x)$ admits non-constant positive steady states, and this non-constant positive steady state is unique and globally stable when $r^*$ is suitably small by applying the same proof procedure as in Theorems \ref{GBSp}-\ref{GBSps}. Consequently, the classical Hopf bifurcation approach cannot be applied directly, which substantially complicates the analysis of positive periodic solutions. We thus turn to numerical simulations to explore possible periodic patterns and their underlying mechanisms  when $r^*$ is not small, thereby addressing question (Q3).

Numerical simulations reveal two distinct dynamical regimes for stationary spatially heterogeneous resources. When the domain-averaged resource exceeds \(10.13\), strong spatial heterogeneity suppresses temporal oscillations, whereas moderately uniform resource distributions facilitate temporal oscillations. Conversely, when the global average below \(10.13\), oscillation generation is no longer governed by the total resource amount. Instead, time-periodic dynamics are triggered by locally concentrated resource regions. Moreover, a lower overall resource level requires stronger local resource accumulation to sustain oscillations. Detailed numerical findings are presented below.

\vspace{2mm}
{\bf  Case 1: Sufficient resource input (i.e., $\bar{r}(x)>10.13$).} 
Our numerical results indicate that spatial heterogeneity can suppress temporal oscillations. Moreover, once the domain-averaged resource is sufficiently large, the emergence of time-periodic solutions is determined primarily by the spatial distribution of resources rather than their total abundance. To illustrate this phenomenon, we consider two heterogeneous resource distributions:

\begin{align}
\label{r12}
&r_1(x):=8\pi-4\pi\cos\Big(\frac{x}{6\pi}\Big),\ r_2(x):=8\pi-4\pi\cos\Big(\frac{8x}{3\pi}\Big),
\end{align}
whose spatial profiles are displayed in Figure \ref{5678}(a). The corresponding resource statistics are computed as 
\begin{align*}
&\max\limits_{x\in [0,3\pi]} r_1(x)\approx14.11,\ \bar{r}_1(x)\approx 13.08>10.13,\ \min\limits_{x\in [0,3\pi]}r_1(x)=4\pi\approx 12.57,\\
&\max\limits_{x\in [0,3\pi]} r_2(x)\approx37.70,\ \bar{r}_2(x)\approx 23.58>10.13,\ \min\limits_{x\in [0,3\pi]}r_2(x)=4\pi\approx 12.57.
\end{align*}
For comparison, we also examine the homogeneous resource case $r\equiv 13.08$, which coincides with the spatial average of $r_1(x)$. 

As shown in Figure \ref{fig-sp}, although the average resource level of $r_2(x)$ ($\bar{r}_2\approx 23.58$) is much higher than that of both the homogeneous resource distribution $r\equiv 13.08$ and the weakly heterogeneous case $r_1(x)$ ($\bar{r}_1\approx 13.08$), and the minima of all three cases remains above $10.13$, only the homogeneous distribution $r\equiv 13.08$ and the weakly heterogeneous $r_1(x)$ generate stable time-periodic patterns (Figure \ref{fig-sp}(a)--(b)). In contrast, the strongly heterogeneous distribution $r_2(x)$ leads to a non-constant steady state (Figure \ref{fig-sp}(c)). These results indicate that, once the average resource level is sufficiently high,  the stronger spatial heterogeneity suppresses temporal oscillations. However, when the resource input becomes insufficient, the situation is more complicated, as discussed below.

\vspace{2mm}
{\bf Case 2: Insufficient resource supply (i.e., $\bar{r}(x)\leq10.13$).}
To characterize the periodic dynamics under insufficient resource supply, we define the local average resource level
$$\bar{r}_i^l(x)=\frac{1}{|\Omega'|}\int_{\Omega'} r_i(x)dx \  \ \text{with} \ \  \Omega':=\{x\in [0,3\pi]\ |\ r_i(x)\geq 10.13\},$$
which represents the average resource level over the regions where the resource level exceeds $10.13$. Numerical results reveal that the local average resource abundance plays a crucial role in triggering temporal oscillations when the global resource supply is inadequate. In this scenario, spatially uniform resource distributions are less favorable for the emergence of periodic solutions, which is in contrast to the homogeneity effect observed under sufficient resource conditions.

To illustrate these observations, we consider two classes of resource functions defined on $[0,3\pi]$: the first class has the same derivatives (see Figure \ref{5678}(b)), while the second class has identical spatial profiles up to a multiplicative constant (see Figure \ref{5678}(c)). The specific resource distributions are given by:
\begin{figure}[t!]
\centering
\begin{minipage}[b]{0.96\textwidth}
    \includegraphics[width=\linewidth]{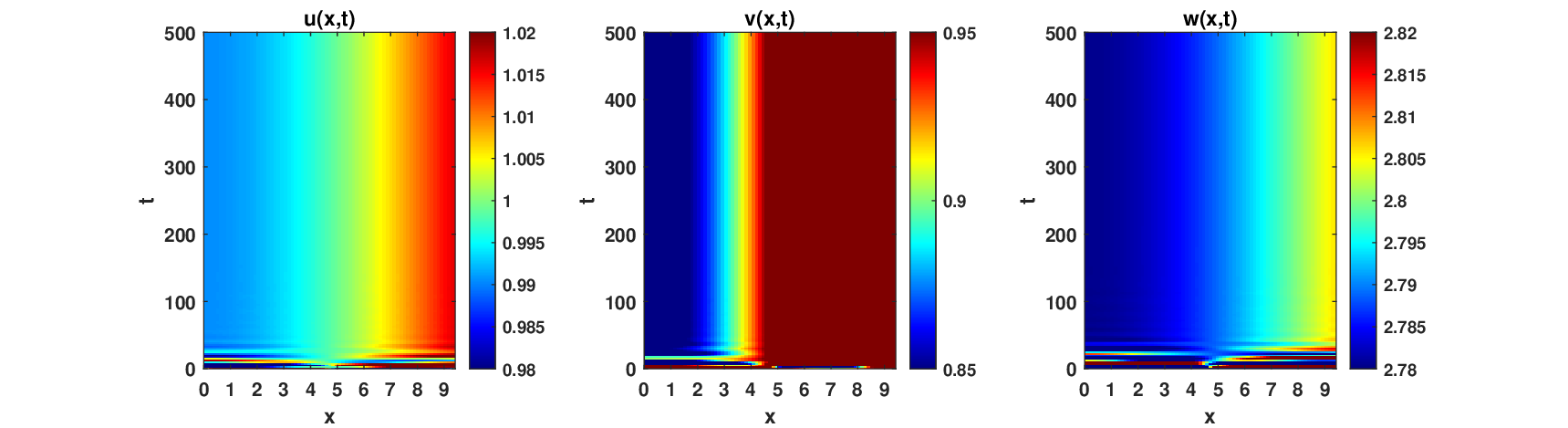}  
    \subcaption{$r(x)=r_3(x)$}
\end{minipage} 
\begin{minipage}[b]{0.96\textwidth}
    \includegraphics[width=\linewidth]{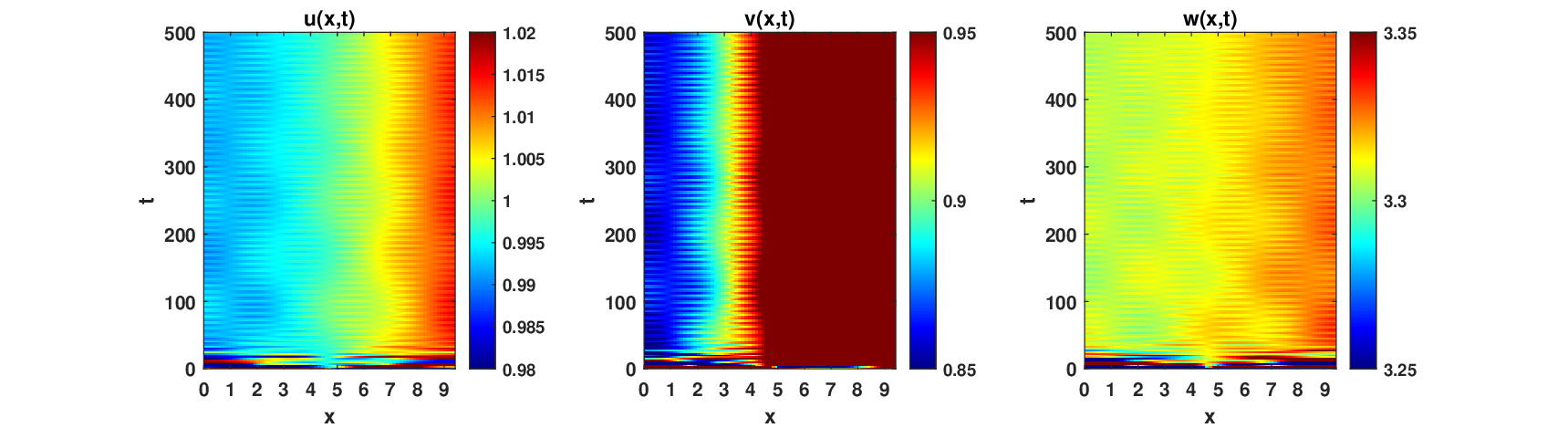}  
    \subcaption{$r(x)=r_4(x)$}
\end{minipage}  
\captionsetup{width=\textwidth}
\caption{Numerical simulations of  solution profiles of \eqref{system} and $r_3(x), r_4(x)$ are taken in \eqref{r56}. The other parameters set as in \eqref{pv}, and initial data $(u_0,v_0,w_0)$ is taken as $(1+0.1\sin(\frac{2 x}{3}), 1+0.1\sin(\frac{2 x}{3}), \frac{8}{3})$.} \label{fig-56}
\end{figure}
\begin{figure}[t!]
\centering
\begin{minipage}[b]{0.96\textwidth}
    \includegraphics[width=\linewidth]{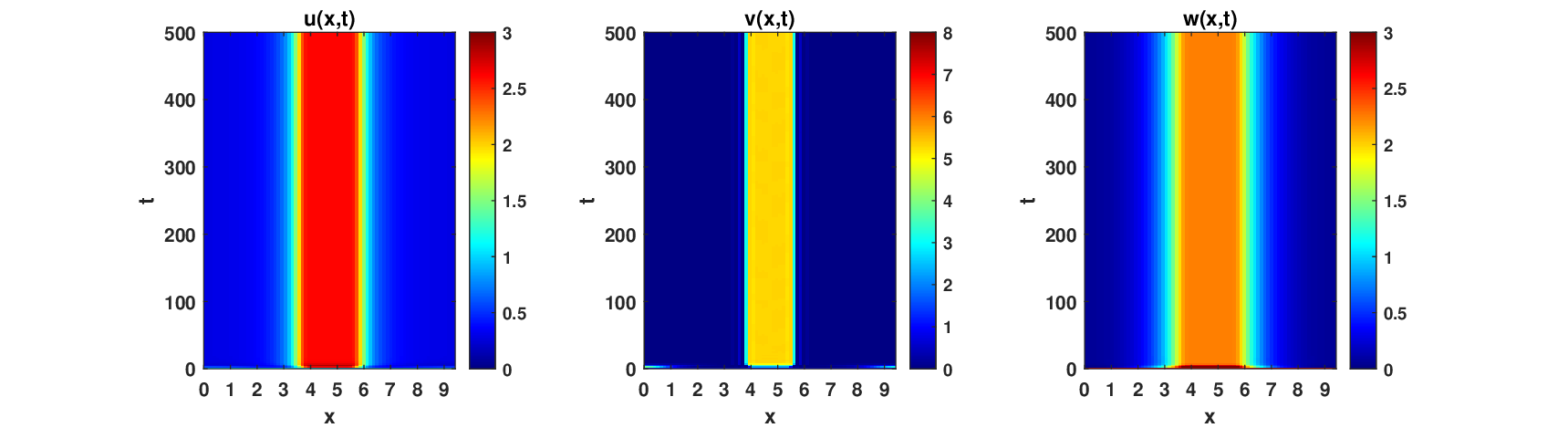}  
    \subcaption{$r(x)=r_5(x)$}
\end{minipage} 
\begin{minipage}[b]{0.96\textwidth}
    \includegraphics[width=\linewidth]{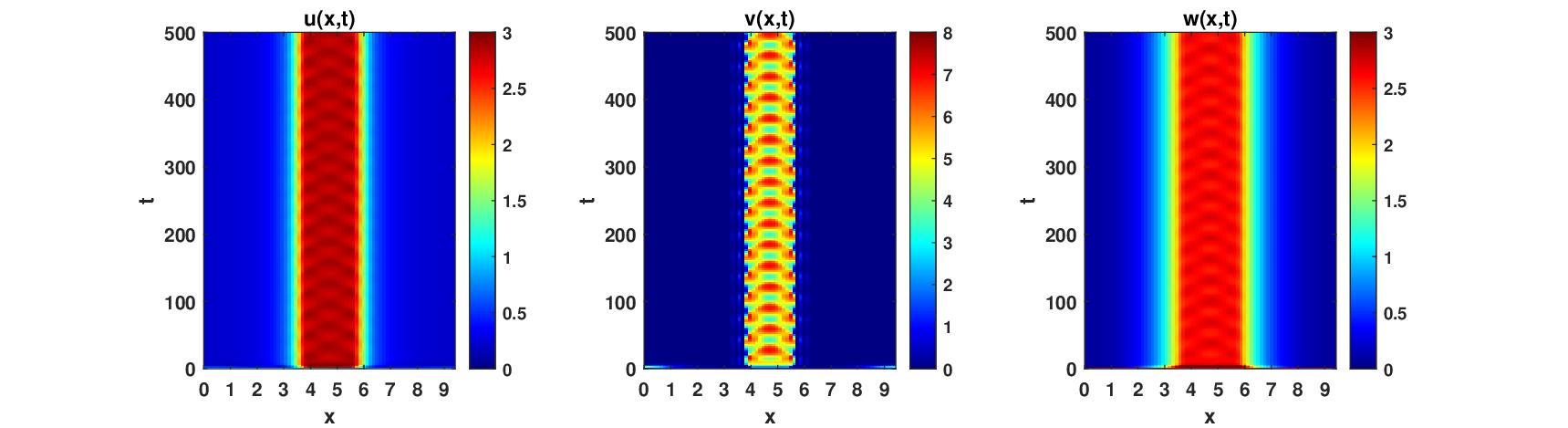}  
    \subcaption{$r(x)=r_6(x)$}
\end{minipage}  
\captionsetup{width=\textwidth}
\caption{Numerical simulations of  solution profiles of \eqref{system} and $r_5(x), r_6(x)$ are taken in \eqref{r78}. The other parameters set as in \eqref{pv}, and initial data $(u_0,v_0,w_0)$ is taken as $(1+0.1\sin(\frac{2 x}{3}), 1+0.1\sin(\frac{2 x}{3}), \frac{8}{3})$.} \label{fig-78}
\end{figure}
\begin{align}
\label{r56}
&r_3(x):=6.8\pi-4\pi\cos\Big(\frac{x}{6\pi}\Big),\ r_4(x):=7\pi-4\pi\cos\Big(\frac{x}{6\pi}\Big), \\
\label{r78}
&r_5(x) := 20 e^{-\left(\frac{x - 1.5\pi}{1.2}\right)^8} , \quad \quad \quad r_6(x) := 24 e^{-\left(\frac{x - 1.5\pi}{1.2}\right)^8} ,
\end{align}
with the corresponding quantitative characteristics calculated as
\begin{align*}
&\max\limits_{x\in [0,3\pi]} r_3(x)\approx10.33,\ \bar{r}_3(x)\approx 9.31<10.13,\ \min\limits_{x\in [0,3\pi]}r_3(x)\approx 8.80, \ \bar{r}_3^l(x) \approx 10.23 >10.13,\\
&\max\limits_{x\in [0,3\pi]} r_4(x)\approx10.96,\ \bar{r}_4(x)\approx 9.94<10.13,\ \min\limits_{x\in [0,3\pi]}r_4(x)\approx 9.42,\ \bar{r}_4^l(x) \approx 10.52 >10.13,\\
&\max\limits_{x\in [0,3\pi]} r_5(x)=20,\ \ \bar{r}_5(x)\approx 4.80<10.13,\ \ \quad\min\limits_{x\in [0,3\pi]}r_5(x)\approx 0.00, \ \bar{r}_5^l(x) \approx 42.82 >10.13,\\
&\max\limits_{x\in [0,3\pi]} r_6(x)=24,\ \ \bar{r}_6(x)\approx 5.76<10.13,\ \ \quad\min\limits_{x\in [0,3\pi]}r_6(x)\approx 0.00,\ \bar{r}_6^l(x) \approx 52.14>10.13.\\
\end{align*}

As illustrated in Figure \ref{fig-56}, for resource distributions with same derivative, comparison between Figure \ref{fig-56}(a) ($\bar{r}_3^l\approx 10.23$) and Figure \ref{fig-56}(b) ($\bar{r}_4^l\approx 10.52$) demonstrates that sufficiently high local average resource level can induce temporal oscillations. The numerical results in Figure \ref{fig-78} further confirm that temporal periodic patterns can still emerge under globally insufficient resource conditions, provided that the local average resource level is sufficiently high.  Furthermore, comparing $r_4(x)$ and $r_6(x)$ shows that a lower global average resource (e.g., $\bar{r}_6\approx 5.76$ for $r_6(x)$, which is smaller than $\bar{r}_4\approx 9.94$ for $r_4(x)$) requires a much higher local average resource level ($\bar{r}_6^l\approx 52.14 > \bar{r}_4^l\approx 10.52$) to trigger periodic dynamics.  These results suggest that increasing the global average resource level lowers the local resource level required to sustain periodic dynamics.

\subsection{Discussion}
This section summarizes the numerical results related to questions (Q1)–(Q3) by systematically comparing the dynamical behaviors of the system \eqref{system} under time-periodic, homogeneous, and stationary spatially heterogeneous resource environments. A rigorous mathematical analysis of these numerical observations remains a direction for future work.

In time-periodic settings with fixed initial biomass, the non-constant positive periodic solution remains unique and globally stable for both small and large values of $r_*$ under the assumption (H0). These numerical results demonstrate that the theoretically established small-$r_*$ condition is sufficient but not necessary for global stability. This can be attributed to the dominant influence of external periodic forcing, which synchronizes the system dynamics and maintains coherent temporal oscillations regardless of the resource level.  

In homogeneous environments (i.e., $r>0$ is a constant), the system exhibits mode-dependent Hopf bifurcation.  When $r$ slightly exceeds the critical threshold $r_4^H\approx10.13$, only low-order spatial modes trigger stable, globally attractive single periodic patterns. As $r$ increases beyond $r_6^H\approx11.9$,  multiple unstable modes emerge, and  nonlinear mode interaction gives rise to bistability, characterized by the coexistence of stable five-peaked and six-peaked periodic solutions. Notably, the first numerically observed Hopf bifurcation occurs at $r_4^H\approx10.13$, which is smaller than the theoretically predicted threshold $r_5^H\approx10.60$, indicating that the smallest Hopf bifurcation point may occur at the mode $m=4$.

For stationary spatially heterogeneous resource distributions, the emergence of periodic dynamics is jointly influenced by the global resource abundance, local resource level, and the degree of spatial uniformity.  When both the average and minimum resource levels exceed $10.13$, relatively uniform resource distributions promote temporal oscillations throughout the domain, whereas strong spatial heterogeneity suppresses oscillations and drives the system toward a non-constant steady state. Conversely, when the global average resource is insufficient, domain-wide oscillations disappear, but the sufficiently enriched local resource can act as oscillation sources and generate local even global periodic dynamics. Moreover, the numerical results reveal a clear compensatory effect: a lower global resource level requires a much higher local resource level to initiate and sustain temporal periodic dynamics.

\appendix
\renewcommand{\thesection}{ \Alph{section}}
\section{H\"{o}lder regularity}
Based on the boundedness of global classical solutions in \eqref{uvb*}, this appendix aims to improve the regularity, which is crucial in establishing the global stability of periodic solutions.
\begin{lemma}\label{lem:holder}
Let the hypothesis $(\operatorname{H})$ hold, and $(u, v, w)$ be the nonnegative global classical solution of \eqref{system} satisfying \eqref{uvb*}. Then there exists a constant $K_2>0$, independent of $t$ and $r_*$,
such that, for all $t\geq2$,
\begin{equation}\label{conclu_w}
\|w(\cdot, t)\|_{W^{1,\infty}} \le K_0 K_2 (r_*+1),
\end{equation}
and
\begin{equation}\label{conclu_uv}
\|u\|_{C^{\tilde{\alpha}, \frac{\tilde{\alpha}}{2}}(\bar{\Omega}\times[t, t+1])} + \|v\|_{C^{\tilde{\alpha}, \frac{\tilde{\alpha}}{2}}(\bar{\Omega}\times[t, t+1])} \le K_0^{\frac{n+11}{2}} K_2 (r_*+1)^{\frac{n+7}{2}},
\end{equation}
where $\tilde{\alpha}\in (0,1)$ and $K_0:=K_0(r_*)\geq 1$ is given in \eqref{uvb*}.
\end{lemma}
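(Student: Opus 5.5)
The plan is to derive both estimates from the $L^\infty$ bound \eqref{uvb*} by standard parabolic regularity applied on unit time intervals. We start with $w$ and only then turn to $u$ and $v$, keeping track of how the constants depend on $K_0$ and $r_*$.

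\textbf{Step 1: the bound \eqref{conclu_w} for $w$.}
By the comparison principle, $0\le w\le \max\{\|w_0\|_{L^\infty}, r_*/\mu\}$. For $t\ge1$ the dependence on $w_0$ can be removed: the variation of constants formula
\[
w(t)=e^{t(d\Delta-\mu)}w_0+\int_0^t e^{(t-s)(d\Delta-\mu)}\big(r-\lambda(u+v)w\big)(s)\,ds
\]
shows that the initial-data term decays like $e^{-\mu t}$. We then apply the standard Neumann heat semigroup estimate
\[
\|\nabla e^{\tau\Delta}f\|_{L^\infty}\le c\,(1+\tau^{-1/2})e^{-\sigma_1\tau}\|f\|_{L^\infty}
\]
to the source $f=r-\lambda(u+v)w$. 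Using $\|u+v\|_{L^\infty}\le K_0$ from \eqref{uvb*} and the bound on $w$, this gives, for $t\ge2$,
\[
\|\nabla w(\cdot,t)\|_{L^\infty}\le c\big(r_*+K_0\|w\|_{L^\infty}\big)+ce^{-\mu t}\|w_0\|_{W^{1,\infty}}.
\]

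A first concern is that the factor $K_0 r_*/\mu$ could enter quadratically. This does not happen, because $w$ is bounded by $r_*/\mu$ plus a decaying term, so the $w$-contribution carries no extra power of $K_0$. The initial-data term is absorbed into the constant for $t\ge 2$. There is a real subtlety here: independence of $K_2$ from the initial data is not literally possible. I will therefore allow $K_2$ to depend on the data through these decaying terms; equivalently, one restricts to large $t$, as the paper does. This yields $\|w\|_{W^{1,\infty}}\le K_0K_2(r_*+1)$.

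\textbf{Step 2: Hölder bounds \eqref{conclu_uv} for $u$ and $v$.}
Write the $u$-equation in divergence form,
\[
u_t=\nabla\cdot(\nabla u-\chi_1u\nabla w).
\]
Its drift $u\nabla w$ is bounded in $L^\infty$ by $K_0\cdot K_0K_2(r_*+1)$. The De Giorgi--Nash--Moser Hölder estimate for divergence-form equations with bounded flux and Neumann data (Porzio--Vespri) then gives a Hölder bound for $u$ on $\overline\Omega\times[t,t+1]$, using the cylinder $[t-1,t+1]$. For $v$, the drift is $v\nabla u$, so $\nabla u\in L^\infty$ or $L^p$ is needed first. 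I will obtain it by maximal $L^p$ regularity on time windows, applied to
\[
u_t-\Delta u=-\chi_1\nabla u\cdot\nabla w-\chi_1u\Delta w.
\]
This in turn requires $\Delta w\in L^p$, which follows from maximal regularity for the $w$-equation with right-hand side bounded by $K_0(r_*+1)$. Feeding the resulting $L^p$ bound on $\nabla u$ (for large $p$) into the Porzio--Vespri estimate for $v$ gives the Hölder bound for $v$. The polynomial dependence $K_0^{(n+11)/2}(r_*+1)^{(n+7)/2}$ comes from tracking powers through these steps: the Hölder constant depends polynomially on the drift bound, with an exponent of order $n/2$ appearing in the Moser-type dependence. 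I would check the exponents at the end, or state them with generic constants.

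\textbf{Main obstacle.}
The main obstacle is Step 2 for $v$. One needs uniform-in-time control of $\nabla u$ on windows, without relying on the smallness of $r_*$, and with explicit polynomial dependence on $K_0$. I would first try a localized-in-time maximal regularity argument with cutoff $\zeta(t)$ supported in $[t-1,t+1]$, bootstrapping in the following order:
\[
\|w\|_{W^{2,1}_p}\;\to\;\|\nabla u\|_{L^{p}}\;\text{ via Gagliardo--Nirenberg interpolation as in \eqref{gn1}}\;\to\;\text{Hölder bound for } v.
\]
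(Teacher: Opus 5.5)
Your proposal is correct, but the central step follows a different route from the paper.

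\textbf{Step 1.} This is essentially equivalent to the paper's argument. There, $\|w(\cdot,t)\|_{W^{1,\infty}}\le cN$ with $N:=(r_*+1)K_0$ is obtained from a local $W^{2,1}_{n+3}$ estimate on $\Omega\times(t-1,t+1)$ and the embedding into $C^{1+\theta,\frac{1+\theta}{2}}$, rather than from the Neumann semigroup. The paper also simply lets the constant depend on $\|w_0\|_{L^\infty}$, with no restriction to large $t$. So your treatment of the data dependence matches what is done there.

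\textbf{Control of $\nabla u$.} The genuine difference is how $\nabla u$ is bounded before Porzio--Vespri is applied to $v$. The paper proves $\|\nabla u(\cdot,t)\|_{L^\infty}\le cK_0(N^{\frac{n+7}{2}}+1)$ for $t\ge2$ using:
an $L^2$ energy estimate for $\nabla u$;
Duhamel's formula on $[t-1,t]$ with $\|\nabla e^{\tau\Delta}f\|_{L^\infty}\lesssim\tau^{-\frac12-\frac{n}{2(n+3)}}\|f\|_{L^{n+3}}$;
the interpolation $\|\nabla u\|_{L^{n+3}}\lesssim\|\nabla u\|_{L^\infty}^{\theta_1}\|\nabla u\|_{L^2}^{1-\theta_1}$ with $\theta_1=1-\frac{2}{n+3}$;
and a bootstrap for $\sup_{2<t<T}\|\nabla u(\cdot,t)\|_{L^\infty}$, coupled to the weighted quantity $\sup_{1<\tau\le2}(\tau-1)^{1/2}\|\nabla u(\cdot,\tau)\|_{L^\infty}$.

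The exponent $\frac{n+7}{2}$ comes from Young's inequality with power $\frac{1}{1-\theta_1}=\frac{n+3}{2}$ in that bootstrap. It does not come from a Moser-type dependence in the H\"older estimate; the paper takes the Porzio--Vespri constant to be linear in the drift bound.

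\textbf{What your route gives.} Your localized maximal-regularity argument uses a time cutoff, bounds $\zeta u\Delta w$ in $L^p$ through $\Delta w\in L^p$ on windows, and absorbs $\nabla(\zeta u)\cdot\nabla w$ by interpolation using $\|\nabla w\|_{L^\infty}\le cN$. This avoids the singular-weight bookkeeping entirely and gives $\|u\|_{W^{2,1}_p(\Omega\times(t,t+1))}\lesssim K_0(N^2+1)$. Taking $p>n+2$, the embedding then gives $\|\nabla u\|_{L^\infty}\lesssim K_0(N^2+1)$ directly, so your $v$-step becomes exactly the paper's.

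With the same linear Porzio--Vespri dependence, you get H\"older bounds $\lesssim K_0^2(r_*+1)$ for $u$ and $\lesssim K_0^4(r_*+1)^2$ for $v$. Since $K_0\ge1$ and $r_*+1\ge1$, these are dominated by $K_0^{\frac{n+11}{2}}(r_*+1)^{\frac{n+7}{2}}$. You should say explicitly that this monotonicity is what yields the stated form, rather than leaving the exponents ``to be checked at the end''.

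\textbf{What the paper's route gives.} It uses only semigroup smoothing and an $L^2$ energy bound for the $u$-equation, with no maximal regularity and no absorption argument for $u$.
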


\begin{proof}
\textbf{Step 1: Uniform boundedness for $\|w(\cdot,t)\|_{W^{1,\infty}}$ and integral bounds for $\Delta w$.}
We rewrite the third equation of \eqref{system} as
$$ w_t - d\Delta w +\mu w = f(x,t),$$
where $f(x,t) := -\lambda(u+v)w + r(x,t)$. Applying the comparison principle to the $w$-equation yields 
$$ \|w(\cdot, t)\|_{L^\infty} \le \|w_0\|_{L^\infty} + \frac{r_*}{\mu}. $$
Then the boundedness of $u$ and $v$ in \eqref{uvb*} guarantees that 
$$ 
\begin{aligned}
\|f(\cdot, t)\|_{L^\infty} &\le \lambda(\|u(\cdot, t)\|_{L^\infty}+\|v(\cdot, t)\|_{L^\infty})\|w(\cdot, t)\|_{L^\infty} + \|r(\cdot, t)\|_{L^\infty} \\
&\le 2\lambda K_0\left(\|w_0\|_{L^\infty} + \frac{r_*}{\mu}\right) + r_* \\
&\leq \frac{2\lambda}{\mu} r_*K_0 + 2\lambda\|w_0\|_{L^\infty}K_0 + r_*K_0\\
&\leq  c_1K_0(r_*+1),
\end{aligned}
$$
where the constant $c_1 > 0$ is independent of $t$ and $r_*$. 
To simplify notation, we denote
$$N := (r_*+1)K_0.$$
Applying the standard local-in-time parabolic $L^p$ estimate on the enlarged cylinder $\Omega\times(t-1,t+1)$, there exists a constant $c_3>0$ independent of $t$ and $r_*$ such that
\begin{equation}\label{w21p-w}
\|w\|_{W_{n+3}^{2,1}(\Omega\times(t,t+1))}
\leq c_2\Big(
\|w\|_{L^\infty(\Omega\times(t-1,t+1))}
+\|f\|_{L^\infty(\Omega\times(t-1,t+1))}
\Big)
\leq c_3N,\qquad t\geq1.
\end{equation}
Since $n+3>n+2$, the parabolic Sobolev embedding theorem gives
\[
W_{n+3}^{2,1}(\Omega\times(t,t+1))
\hookrightarrow
C^{1+\theta,\frac{1+\theta}{2}}
(\overline{\Omega}\times[t,t+1])
\hookrightarrow
L^\infty([t,t+1];W^{1,\infty}(\Omega))
\]
for any $\theta\in\left(0,\frac{1}{n+3}\right)$. Hence, we obtain
\begin{equation}\label{conclu_w_new}
\|w(\cdot, s)\|_{W^{1,\infty}} \le c_4 N, \quad \forall s \in [t, t+1], \ t \ge 1,
\end{equation}
which implies \eqref{conclu_w}. Furthermore, \eqref{w21p-w} gives
\begin{equation}\label{d2w_n+3}
\int_t^{t+1} \|\Delta w(\cdot, s)\|_{L^{n+3}}^{n+3} ds \le c_5 N^{n+3}, \quad \forall t \ge 1.
\end{equation} 

\textbf{Step 2: Uniform boundedness of $\|\nabla u(\cdot, t)\|_{L^\infty}$.}
Multiplying the first equation of \eqref{system} by $-\Delta u$ and integrating the results by parts over $\Omega$, we obtain
\begin{equation}\label{grau-1}
 \frac{1}{2} \frac{d}{dt} \int_\Omega |\nabla u|^2 + \int_\Omega |\Delta u|^2 = \chi_1 \int_\Omega \Delta u \nabla \cdot (u\nabla w)=\chi_1 \int_\Omega \Delta u (\nabla u \cdot \nabla w + u\Delta w). 
\end{equation}
Applying the H\"{o}lder inequality, Young's inequality along with \eqref{uvb*} and \eqref{conclu_w}, we have
$$ 
\begin{aligned}
\chi_1 \int_\Omega \Delta u (\nabla u \cdot \nabla w + u\Delta w) 
&\le \chi_1 \|\nabla w\|_{L^\infty} \int_\Omega |\Delta u||\nabla u| + \chi_1 \|u\|_{L^\infty}\int_\Omega |\Delta w||\Delta u| \\
&\le \frac{1}{2} \int_\Omega |\Delta u|^2 + c_6 N^2 \int_\Omega |\nabla u|^2 + c_7 K^2_0 \|\Delta w\|_{L^2}^2,
\end{aligned}
$$
which, substituted into \eqref{grau-1}, gives
\begin{equation}\label{grad_u_energy_1}
\frac{d}{dt} \int_\Omega |\nabla u|^2 + \int_\Omega |\Delta u|^2 \le  2c_6 N^2 \int_\Omega |\nabla u|^2 + 2c_7 K_0^2 \|\Delta w\|_{L^2}^2.
\end{equation}
Applying the Gagliardo-Nirenberg inequality and using \eqref{uvb*}, we obtain
\begin{equation}\label{GN_ineq}
\begin{aligned}
(2c_6 N^2 + 1) \int_\Omega |\nabla u|^2 &\le c_{8}(N^2+1) \left( \|\Delta u\|_{L^2} \|u\|_{L^2} + \|u\|_{L^2}^2 \right) \\
&\le c_{8}(N^2+1) K_0\left( |\Omega|^{\frac{1}{2}}\|\Delta u\|_{L^2}  + |\Omega|K_0\right) \\
&\le \int_\Omega |\Delta u|^2 + c_{9}K_0^2(N^2+1)^2.
\end{aligned}
\end{equation}
Substituting \eqref{GN_ineq} into \eqref{grad_u_energy_1} implies
\begin{equation}\label{nul2-g}
\frac{d}{dt}\int_\Omega|\nabla u|^2+\int_\Omega|\nabla u|^2\leq c_{10}K_0^2(N^2+1)^2+2c_7K_0^2\|\Delta w\|_{L^2}^2,\quad \forall\,t>1.
\end{equation}
Moreover, by the H\"{o}lder inequality and \eqref{d2w_n+3}, we have
\begin{equation}\label{d2w_2}
\begin{aligned}
\int_t^{t+1}\|\Delta w(\cdot,s)\|_{L^2}^2\,ds \leq c_{11}\left(\int_t^{t+1}\|\Delta w(\cdot,s)\|_{L^{n+3}}^{n+3}\,ds\right)^{\frac{2}{n+3}} \leq c_{12}N^2,\qquad \forall\,t\geq 1. 
\end{aligned}
\end{equation}
Therefore, solving the differential inequality \eqref{nul2-g} and using \eqref{d2w_2}, we obtain
\begin{equation}\label{grad_u_L2}
\|\nabla u(\cdot,t)\|_{L^2}\leq c_{13}K_0(N^2+1),\qquad \forall\,t\geq1.
\end{equation}
On the other hand, applying Duhamel's formula to the $u$-equation in \eqref{system} over the interval $[t-1, t]$ for any $t\geq 2$ gives
$$ u(\cdot,t)=e^\Delta u(\cdot,t-1)-\chi_1\int_{t-1}^t e^{(t-s)\Delta}\nabla\cdot (u\nabla w)(\cdot, s)ds.$$
Using the well-known semigroup estimate (see e.g., \cite[Lemma 2.1]{CaoXinru-DCDSA-2015}) gives
\begin{equation}\label{Duhamel_expanded}
\begin{aligned}
\|\nabla u(\cdot,t)\|_{L^\infty}
&\leq \|\nabla e^\Delta u(\cdot,t-1)\|_{L^\infty}+\chi_1\int_{t-1}^t\big\|\nabla e^{(t-s)\Delta}\nabla\cdot(u\nabla w)(\cdot,s)\big\|_{L^\infty}\,ds\\
&\leq c_{14}K_0+c_{15}\int_{t-1}^t(t-s)^{-\frac12-\frac{n}{2(n+3)}}\|\nabla\cdot(u\nabla w)(\cdot,s)\|_{L^{n+3}}\,ds\\
&\leq c_{14}K_0+c_{16}N\int_{t-1}^t(t-s)^{-\frac12-\frac{n}{2(n+3)}}\|\nabla u(\cdot,s)\|_{L^{n+3}}\,ds\\
&\quad+c_{17}K_0\int_{t-1}^t(t-s)^{-\frac12-\frac{n}{2(n+3)}}\|\Delta w(\cdot,s)\|_{L^{n+3}}\,ds.
\end{aligned}
\end{equation}
Moreover, since $\left(\frac12+\frac{n}{2(n+3)}\right)\frac{n+3}{n+2}
=\frac{2n+3}{2n+4}<1,$ applying the H\"{o}lder inequality and \eqref{d2w_n+3} yields
\begin{equation}\label{Dwib}
\begin{aligned}
&\int_{t-1}^{t}(t-s)^{-\frac12-\frac{n}{2(n+3)}}\|\Delta w(\cdot,s)\|_{L^{n+3}}\,ds\\
&\leq\left(\int_{t-1}^{t}(t-s)^{-\left(\frac12+\frac{n}{2(n+3)}\right)\frac{n+3}{n+2}}\,ds\right)^{\frac{n+2}{n+3}}\left(\int_{t-1}^{t}\|\Delta w(\cdot,s)\|_{L^{n+3}}^{n+3}\,ds\right)^{\frac1{n+3}}\\
&\leq c_{18}N.
\end{aligned}
\end{equation}
Substituting \eqref{Dwib} into \eqref{Duhamel_expanded}, we obtain
\begin{equation}\label{Duhamel_simplified}
\|\nabla u(\cdot,t)\|_{L^\infty}\leq c_{19}K_0(N+1)+c_{20}N\int_{t-1}^t(t-s)^{-\frac12-\frac{n}{2(n+3)}}\|\nabla u(\cdot,s)\|_{L^{n+3}}\,ds.
\end{equation}
Let $\theta_1:=1-\frac{2}{n+3}\in(0,1)$, then by the interpolation inequality and \eqref{grad_u_L2}, it holds that
\begin{equation}\label{interp_Lp}
\begin{aligned}
\|\nabla u(\cdot,s)\|_{L^{n+3}}
\leq c_{21}\|\nabla u(\cdot,s)\|_{L^\infty}^{\theta_1}\|\nabla u(\cdot,s)\|_{L^2}^{1-\theta_1}\leq c_{22}\big[K_0(N^2+1)\big]^{1-\theta_1}\|\nabla u(\cdot,s)\|_{L^\infty}^{\theta_1}.
\end{aligned}
\end{equation}
We substitute \eqref{Duhamel_simplified} into \eqref{interp_Lp} to get
\begin{equation}\label{nuinfty_t-1}
\begin{aligned}
\|\nabla u(\cdot,t)\|_{L^\infty}
\leq c_{23}K_0(N+1)+c_{23}N\big[K_0(N^2+1)\big]^{1-\theta_1}\int_{t-1}^t(t-s)^{-\frac12-\frac{n}{2(n+3)}}\|\nabla u(\cdot,s)\|_{L^\infty}^{\theta_1}\,ds.
\end{aligned}    
\end{equation}
To estimate \eqref{nuinfty_t-1}, we set 
$$M_T:=\sup_{2<t<T}\|\nabla u(\cdot,t)\|_{L^\infty}$$
for any $T>3$.  We emphasize that all constants appearing below are independent of $T$. Now, we discuss two cases.\\
Case 1: $3\leq t<T$. In this case,  we have
\begin{equation*}\label{3-T}
\int_{t-1}^t(t-s)^{-\frac12-\frac{n}{2(n+3)}}\|\nabla u(\cdot,s)\|_{L^\infty}^{\theta_1}\,ds\leq M_T^{\theta_1}\int_{t-1}^t(t-s)^{-\frac12-\frac{n}{2(n+3)}}\,ds\leq c_{24}M_T^{\theta_1},   
\end{equation*}
which, substituted into \eqref{nuinfty_t-1}, gives
\begin{equation}\label{0709}
\|\nabla u(\cdot,t)\|_{L^\infty}
\leq c_{25}K_0(N+1)
+c_{25}N\big[K_0(N^2+1)\big]^{1-\theta_1}
M_T^{\theta_1},
\qquad 3\leq t<T.
\end{equation}
Case 2: $2\leq t<3$. In this case, noting $\frac{1}{2}+\frac{n}{2(n+3)}<1,$  we have
\begin{equation}\label{2-3*}
\begin{aligned}
&\int_{t-1}^t(t-s)^{-\frac12-\frac{n}{2(n+3)}}
\|\nabla u(\cdot,s)\|_{L^\infty}^{\theta_1}\,ds\\
&=\int_{t-1}^2(t-s)^{-\frac12-\frac{n}{2(n+3)}} (s-1)^{-\frac{\theta_1}{2}}[(s-1)^{\frac{1}{2}}\|\nabla u(\cdot,s)\|_{L^\infty}]^{\theta_1}\,ds\\
&\ \ \ \ +\int_2^t(t-s)^{-\frac12-\frac{n}{2(n+3)}}
\|\nabla u(\cdot,s)\|_{L^\infty}^{\theta_1}\,ds\\
&\leq H^{\theta_1} \int_{1}^2(2-s)^{-\frac12-\frac{n}{2(n+3)}}(s-1)^{-\frac{\theta_1}{2}}\,ds
+M_T^{\theta_1}\int_2^t(t-s)^{-\frac12-\frac{n}{2(n+3)}}\,ds\\
&\leq c_{26}\big(H^{\theta_1}+M_T^{\theta_1}\big),
\end{aligned}    
\end{equation}
where
\begin{equation}\label{79h}
H:=\sup_{1<\tau\leq 2}(\tau-1)^{\frac12}\|\nabla u(\cdot,\tau)\|_{L^\infty}\geq 0.
\end{equation}
We substitute  \eqref{2-3*} into \eqref{nuinfty_t-1} to obtain
\begin{equation}\label{0709-1}
\|\nabla u(\cdot,t)\|_{L^\infty}
\leq c_{27}K_0(N+1)
+c_{27}N\big[K_0(N^2+1)\big]^{1-\theta_1}
(H^{\theta_1}+M_T^{\theta_1}),
\qquad 2\leq t<3.
\end{equation}
Combining \eqref{0709} with \eqref{0709-1} implies
\begin{equation*}
\|\nabla u(\cdot,t)\|_{L^\infty}
\leq c_{28}K_0(N+1)
+c_{28}N\big[K_0(N^2+1)\big]^{1-\theta_1}
(H^{\theta_1}+M_T^{\theta_1}), \qquad 2\leq t<T.
\end{equation*}
Taking the supremum over $t\in (2,T)$, we have
\begin{equation}\label{0709-2}
M_T
\leq c_{28}K_0(N+1)
+c_{28}N\big[K_0(N^2+1)\big]^{1-\theta_1}
(H^{\theta_1}+M_T^{\theta_1}).
\end{equation}
We next estimate the term $H$ defined in \eqref{79h}. For any $1<\tau\leq 2$, applying Duhamel's formula to the $u$-equation in \eqref{system} on $(1,\tau)$ and arguing as in \eqref{Duhamel_expanded}--\eqref{nuinfty_t-1}, we have
\begin{equation}\label{nui_t=1}
\|\nabla u(\cdot,\tau)\|_{L^\infty}
\leq c_{29}K_0(N+1)(\tau-1)^{-\frac12}+c_{29}N\big[K_0(N^2+1)\big]^{1-\theta_1}\int_1^\tau(\tau-s)^{-\frac12-\frac{n}{2(n+3)}}\|\nabla u(\cdot,s)\|_{L^\infty}^{\theta_1}\,ds.    
\end{equation}
By the change of variables $\eta=\frac{s-1}{\tau-1}$, and noting that
$\frac12+\frac{n}{2(n+3)}<1$ and $\frac{\theta_1}{2}<1$, one derives
\begin{equation}\label{H1}
\begin{aligned}
\int_1^\tau(\tau-s)^{-\frac12-\frac{n}{2(n+3)}}\|\nabla u(\cdot,s)\|_{L^\infty}^{\theta_1}\,ds
&\leq H^{\theta_1}\int_1^\tau(\tau-s)^{-\frac12-\frac{n}{2(n+3)}}(s-1)^{-\frac{\theta_1}{2}}\,ds\\
&=H^{\theta_1}(\tau-1)^{\frac12-\frac{n}{2(n+3)}-\frac{\theta_1}{2}}\int_0^1(1-\eta)^{-\frac12-\frac{n}{2(n+3)}}\eta^{-\frac{\theta_1}{2}}\,d\eta\\
&\leq c_{30}H^{\theta_1}(\tau-1)^{\frac12-\frac{n}{2(n+3)}-\frac{\theta_1}{2}}.    
\end{aligned}    
\end{equation}
Substituting \eqref{H1} into \eqref{nui_t=1} and using $0<\tau-1\leq 1$ yield
\begin{equation*}
\begin{split}
(\tau-1)^{\frac12}\|\nabla u(\cdot,\tau)\|_{L^\infty} &\leq c_{31}K_0(N+1)+c_{31}N\big[K_0(N^2+1)\big]^{1-\theta_1}H^{\theta_1}(\tau-1)^{1-\frac{n}{2(n+3)}-\frac{\theta_1}{2}}\\
&\leq c_{31}K_0(N+1)+c_{31}N\big[K_0(N^2+1)\big]^{1-\theta_1}H^{\theta_1}.
\end{split}
\end{equation*}
Taking the supremum over $\tau\in (1,2)$ and using Young's inequality, one derives
\begin{equation}\label{H2}
H\leq c_{32}K_0\big(N^{\frac{n+7}{2}}+1\big).
\end{equation}
We substitute \eqref{H2} into \eqref{0709-2} to get
\begin{equation*}
\begin{split}
M_T
&\leq c_{28}K_0(N+1)
+c_{33}N\big[K_0(N^2+1)\big]^{1-\theta_1}
[K_0\big(N^{\frac{n+7}{2}}+1\big)]^{\theta_1}+c_{33}N\big[K_0(N^2+1)\big]^{1-\theta_1}M_T^{\theta_1}\\
&\leq c_{28}K_0(N+1) + c_{34}K_0\big(N^{\frac{n+7}{2}}+1\big)+c_{33}N\big[K_0(N^2+1)\big]^{1-\theta_1}M_T^{\theta_1},
\end{split}
\end{equation*}
which, along with Young's inequality, implies
\[
M_T\leq c_{35}K_0\big(N^{\frac{n+7}{2}}+1\big).
\]
Since the above estimate is independent of $T$, letting $T\to\infty$ yields
\begin{equation}\label{grad_u_Linfty}
\|\nabla u(\cdot,t)\|_{L^\infty}
\leq c_{35}K_0\big(N^{\frac{n+7}{2}}+1\big),
\qquad \forall\,t\geq2.
\end{equation}

\textbf{Step 3: H\"{o}lder estimates for $u$ and $v$.}
We rewrite the first equation of \eqref{system} as
$$ u_t = \nabla \cdot A_1(x, t, u, \nabla u), $$
where $A_1(x, t, u, \nabla u) = \nabla u - \chi_1 u \nabla w$. By \eqref{uvb*} and  \eqref{conclu_w_new}, we obtain 
$$\|u\|_{L^\infty} \le K_0,\ \  \|\nabla w\|_{L^\infty(\Omega \times [t, t+1])} \le c_4 N.$$
Then it holds that
\begin{equation}\label{A1_cond1}
\begin{aligned}
A_1(x, t, u, \nabla u) \cdot \nabla u &= (\nabla u - \chi_1 u \nabla w) \cdot \nabla u \\
&\ge \frac{1}{2}|\nabla u|^2 - \frac{\chi_1^2}{2} u^2 |\nabla w|^2 \\
&\ge \frac{1}{2}|\nabla u|^2 - c_{36}K_0^2 N^2,
\end{aligned}
\end{equation}
and
\begin{equation}\label{A1_cond2}
|A_1(x, t, u, \nabla u)| \le |\nabla u| + \chi_1 |u| |\nabla w| \le |\nabla u| + c_{37}K_0N.
\end{equation}
With \eqref{A1_cond1} and \eqref{A1_cond2}, we apply the H\"{o}lder regularity for quasilinear parabolic equations \cite[Theorem 1.3 and Remark 1.4]{PV-JDE-1993} to obtain 
\begin{equation}\label{holder_u}
\|u\|_{C^{\alpha_5, \frac{\alpha_5}{2}}(\bar{\Omega}\times[t, t+1])} \le c_{38}K_0(N+1) \quad \text{for all } t \ge 1.
\end{equation}
Now we rewrite the second equation of \eqref{system} as
$$ v_t = \nabla \cdot A_2(x, t, v, \nabla v),$$
where $A_2(x, t, v, \nabla v) = \nabla v - \chi_2 v \nabla u$. 
Noting \eqref{uvb*} and \eqref{grad_u_Linfty}, adopting the same arguments as proving \eqref{holder_u} gives 
\begin{equation}\label{holder_v}
\|v\|_{C^{\alpha_6, \frac{\alpha_6}{2}}(\bar{\Omega}\times[t, t+1])} \le c_{39}K_0\big(1 + \|\nabla u\|_{L^\infty}\big) \le c_{40}K_0^2(N^{\frac{n+7}{2}}+1) \quad \text{for all } t \ge 2.
\end{equation}
Since $N=(r_*+1)K_0$ and $K_0\geq1$, we have $K_0^2\left(N^{\frac{n+7}{2}}+1\right)\leq2K_0^{\frac{n+11}{2}}(r_*+1)^{\frac{n+7}{2}}.$ Hence, \eqref{conclu_uv} follows directly from \eqref{holder_u} and
\eqref{holder_v} by letting
$\tilde{\alpha}:=\min\{\alpha_5,\alpha_6,\alpha_0\}$.
\end{proof}

\bigbreak
\noindent \textbf{Acknowledgment}.
The research of H.Y. Jin was supported by the NSF of China (No. 12371203), Guangdong Major Project of Basic Research (2026B0303000003).

\small{
}

\end{document}